\tikzset{>=stealth',
         cvertex/.style={circle,draw=black,inner sep=1pt,outer sep=3pt},
         vertex/.style={circle,fill=black,inner sep=1pt,outer sep=3pt},
         star/.style={circle,fill=yellow,inner sep=0.75pt,outer sep=0.75pt},
         tvertex/.style={inner sep=1pt,font=\scriptsize},
         gap/.style={inner sep=0.5pt,fill=white}}
\newcounter{sarrow}
\DeclareFontFamily{OT1}{pzc}{}
\DeclareFontShape{OT1}{pzc}{m}{it}{<-> s * [1.10] pzcmi7t}{}
\DeclareMathAlphabet{\mathpzc}{OT1}{pzc}{m}{it}
\DeclareMathOperator{\hhom}{Hom}
\DeclareMathOperator{\eend}{End}
\DeclareMathOperator{\Gal}{Gal}
\DeclareMathOperator*{\im}{im}
\DeclareMathOperator{\Tate}{Tate}
\newcommand\set[1]{\mathbb{#1}}
\newcommand\clos[1]{\overline{#1}}
\newcommand\sh[1]{\mathcal{#1}}
\newcommand\units[1]{#1^{\times}}
\newcommand\idl[1]{\mathfrak{#1}}
\newcommand\mor[3][]{#2 \xrightarrow{#1} #3}
\newcommand\injmor[3][]{#2 \xhookrightarrow{#1} #3}
\newcommand\surjmor[3][]{#2 \xtwoheadrightarrow{#1} #3}
\newcommand\isomor[3][]{#2 \xrightarrow[#1]{\sim} #3}
\newcommand\inv[1]{#1^{-1}}
\newcommand{\Hom}[3][]{\hhom_{#1}\left(#2, #3\right)}
\newcommand{\End}[2][]{\eend_{#1}\left(#2\right)}
\newcommand{\Zmod}[2]{\set{Z}/#1^{#2}\set{Z}}
\newcommand{\order}[1]{\sh{O}_{#1}}
\newcommand{\Q}[1][]{\set{Q}_{#1}}
\newcommand{\F}[1][]{\set{F}_{#1}}
\newcommand{\Z}[1][]{\set{Z}_{#1}}
\newcommand{\C}[1][]{\set{C}_{#1}}
\newcommand{\GmcR}[1]{\widehat{\set{G}}_{m/R}}
\newcommand{\Qc}[1][]{\clos{\set{Q}}_{#1}}
\newcommand{\Fc}[1][]{\clos{\set{F}}_{#1}}
\newcommand{\Zc}[1][]{\clos{\set{Z}}_{#1}}
\DeclareMathOperator{\GL}{\mbox{GL}}
\DeclareMathOperator{\Aut}{\mbox{Aut}}
\newcommand{\xtwoheadrightarrow}[2][]{%
  \xrightarrow[#1]{#2}\mathrel{\mkern-14mu}\rightarrow
}
\newcommand\mat[4]{\begin{pmatrix}#1 & #2 \\ #3 & #4 \end{pmatrix}}
\newcommand\cyc[3]{\left\langle #1, #2, #3 \right\rangle}
\newcommand\dbr[1]{\left\llbracket #1 \right\rrbracket}
\newtheorem{theorem}{Theorem}[section]
\newtheorem{remark}[theorem]{Remark}
\newtheorem{conjecture}[theorem]{Conjecture}
\newtheorem{lemma}[theorem]{Lemma}
\newtheorem{proposition}[theorem]{Proposition}
\newtheorem{definition}[theorem]{Definition}
\newtheorem{question}[theorem]{Question}
\title{Congruences modulo prime powers of Hecke eigenvalues in level $1$}
\author{Nadim Rustom}
\address{
National Center for Theoretical Sciences\\
Mathematics Division\\
No. 1, Sec. 4, Roosevelt Rd., Taipei City 106, Taiwan Room 203, Astronomy-Mathematics Building, National Taiwan University}
\email{restom.nadim@gmail.com}
\keywords{modular forms, congruences}
\subjclass[2010]{Primary 11F33, Secondary 11F80}
\begin{document}

\begin{abstract} We continue the study of strong, weak, and $dc$-weak eigenforms introduced by Chen, Kiming, and Wiese. We completely determine all systems of Hecke eigenvalues of level $1$ modulo $128$, showing there are finitely many. This extends results of Hatada and can be considered as evidence for the more general conjecture formulated by the author together with Kiming and Wiese on finiteness of systems of Hecke eigenvalues modulo prime powers at any fixed level. We also discuss the finiteness of systems of Hecke eigenvalues of level $1$ modulo $9$, reducing the question to the finiteness of a single eigenvalue. Furthermore, we answer the question of comparing weak and $dc$-weak eigenforms and provide the first known examples of non-weak $dc$-weak eigenforms. 
\end{abstract}

\maketitle

%\tableofcontents

\section{Introduction}
\subsection{Motivation}
The connection between modular forms and representations of the absolute Galois group $G_{\Q} = \Gal(\Qc/\Q)$ is one of the most active areas of research in modern number theory. Given a Hecke eigenform $f$ of a certain weight and level, Deligne (\cite{De71}) and Deligne-Serre (\cite{DS74}) showed that one can attach to $f$ a $2$-dimensional $p$-adic Galois representations for every prime $p$. The following years witnessed significant progress on the converse question: given a $2$-dimensional $p$-adic Galois representation, is it modular (i.e.\ does it come from a Hecke eigenform)?

A fundamental conjecture in this area is the Fontaine-Mazur conjecture (\cite{FM95}) which states that any $2$-dimensional $p$-adic Galois representation which ``resembles" (in some precise sense) a representation occurring in the \'etale cohomology of an algebraic variety is indeed modular up to twist. In recent years, there has been considerable progress on this conjecture due to Emerton (\cite{Em11a}) and Kisin (\cite{Ki09}).

Similarly, Serre's modularity conjecture states that any continuous, odd, and absolutely irreducible mod $p$ representation $\clos{\rho}: G_{\Q} \rightarrow \GL_2(\Fc[p])$ is modular  and gives precise recipes for the optimal weight and level of the corresponding eigenform. Serre's conjecture is now a theorem thanks to Khare and Wintenberger (\cite{KW10}). 

The question of what happens ``in between", i.e.\ for $2$-dimensional mod $p^m$ Galois representations, is then a very natural question to ask. This motivates the study of eigenforms mod $p^m$. Chen, Kiming, and Wiese started this study in \cite{CKW13} and introduced three progressively weaker notions of eigenforms mod $p^m$: strong eigenforms, weak eigenforms, and $dc$-weak eigenforms (``dc" stands for divided congruences, a notion introduced by Katz). These are all defined as the mod $p^m$ reductions of elements in Katz's space of divided congruences. A $dc$-weak eigenform is the mod $p^m$ reduction of a divided congruence $f$ such that $T_n(f) \equiv a_n(f)f \pmod{p^m}$ for all $n \geq 1$. A weak eigenform is a $dc$-weak eigenform that is the mod $p^m$ reduction of a modular form. A strong eigenform is the mod $p^m$ reduction of a classical eigenform in characteristic $0$. As was shown in \cite{CKW13}, one can naturally attach Galois representations to mod $p^m$ $dc$-weak eigenforms, and these satisfy a Ribet-type level lowering result. Conversely, Tsaknias and Wiese showed in \cite{TW16} that mod $p^m$ Galois representation satisfying certain conditions arise from $dc$-weak eigenforms. Of particular interest are the strong eigenforms, as the corresponding mod $p^m$ Galois representations attached to them can be seen as successive $p$-adic approximations to the Galois representations attached to classical eigenforms in characteristic $0$. 

\subsection{Congruences between modular forms} The quest for congruences satisfied by particular modular forms can be traced back to the work of Ramanujan (\cite{Ra16}) and has occupied several authors in the 20th century (e.g.\ \cite{BC47}, \cite{Ko62}, \cite{As68}, and others). Later, attention turned towards finding congruences satisfied by whole collections of modular forms of varying level and weight (e.g.\ the results of Hatada cited below). The present work falls within the latter camp. We will now explain this point of view. 

Let $p$ be a prime, and fix algebraic closures $\Qc$ of $\Q$ and $\Qc[p]$ of $\Q[p]$ and a commutative diagram of embeddings
\[\begin{tikzcd} \Q \arrow[hook]{d} \arrow[hook]{r} & \Q[p] \arrow[hook]{d} \\ \Qc \arrow[hook]{r}& \Qc[p]. \end{tikzcd}    \]
Let $v_p$ be the normalised valuation ($v_p(p)=1$) on $\Qc[p]$ and $\Zc[p]$ the ring of integers in $\Qc[p]$. Fix an integer $N \geq 1$ and consider the $\Zc[p]$-module $S(N, \Zc[p])$ spanned by all modular forms of level $N$ (i.e.\ on $\Gamma_1(N)$) with coefficients in $\Zc[p]$ of all weights. We will say that two modular forms $f, g \in S(N, \Zc[p])$ are congruent modulo $p^m$, and write $f \equiv g \pmod{p^m}$, if $v_p(a_n(f) - a_n(g)) > m - 1$ for all $n \geq 1$. Note that this is not the same as having the $q$-expansion of $f-g$ be divisible by $p^m$. When we mean the latter case, we will write $f \equiv g \pmod{p^m\Zc[p]}$. In this paper, ``eigenform" will only refer to normalised ($a_1 = 1$) cuspidal Hecke eigenforms. 

The following theorem is classical.
\begin{theorem}[Jochnowitz (\cite{Jo82}), Serre-Tate]\label{finitemodp} There are only finitely many congruence classes $\pmod{p}$ of eigenforms of level $N$. Any eigenform of level $N$ is congruent mod $p$ to an eigenform of weight at most $p^2 + p$, and congruent away from $p$ to an eigenform of weight at most $p + 1$. 
\end{theorem}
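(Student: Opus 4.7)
My approach uses Serre's theory of mod $p$ modular forms, in particular the filtration, Hasse invariant, and theta operator. Recall the Hasse invariant $A \in M_{p-1}(\Gamma_1(N), \F[p])$ satisfies $A \equiv 1 \pmod{p}$ on $q$-expansions, so multiplication by $A$ shifts weight by $p-1$ without altering the mod $p$ $q$-expansion. The filtration $w(f)$ is the smallest weight realising a given mod $p$ $q$-expansion. The theta operator $\theta = q \frac{d}{dq}$ sends $M_k(\Gamma_1(N), \F[p])$ to $M_{k+p+1}(\Gamma_1(N), \F[p])$ with $w(\theta f) \le w(f) + p + 1$ and equality iff $p \nmid w(f)$. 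On an eigenform with eigenvalues $(a_n)$, it produces an eigenform with eigenvalues $(n a_n)$ for $(n,p)=1$ and with $U_p$-eigenvalue zero.

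For the weight bound $p+1$ away from $p$, I would analyse the theta cycle of an eigenform $f$. Since $n^{p-1} \equiv 1 \pmod{p}$ for $(n,p)=1$, the form $\theta^{p-1} f$ has the same eigenvalues away from $p$ as $f$. Tracking the filtrations $w(\theta^i f)$ for $i = 0, \ldots, p-1$ via the equality criterion, a drop in filtration must occur whenever $p \mid w(\theta^i f)$, and a counting argument (pigeonholing on where such drops can occur in a cycle of length $p-1$) shows that some form in the cycle has filtration at most $p+1$. This produces an eigenform of weight $\le p+1$ congruent to $f$ away from $p$.

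For the bound $p^2+p$ retaining $T_p$, the theta operator is no longer sufficient since it destroys the $U_p$-eigenvalue. My plan would be to exploit that, given the away-from-$p$ eigensystem (already realised in weight $\le p+1$), the Galois representation attached to $f$ by Deligne has Frobenius characteristic polynomial $X^2 - a_p X + \chi(p) p^{k-1}$, whose reduction mod $p$ restricts $a_p$ to finitely many options. Combining this with finite-dimensionality of each $M_k(\Gamma_1(N), \F[p])$ and a Hecke-algebra stabilisation argument in weights running over a single residue class mod $p-1$ up to weight $p^2+p$, one should realise every full Hecke eigensystem within that weight range. Finiteness of congruence classes mod $p$ then follows at once from this weight bound combined with the finite-dimensionality of $M_k(\Gamma_1(N), \F[p])$ for each $k \le p^2+p$.

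The main obstacle I expect is the sharp bound $p^2+p$: the $p+1$ bound away from $p$ is a clean consequence of the theta cycle, but preserving the full Hecke eigensystem (including $T_p$) through weight reductions requires more delicate control over the $U_p$-action that the theta operator alone does not provide, and matching the exact bound $p^2+p$ rather than merely \emph{some} polynomial bound in $p$ is where the argument becomes technically involved.
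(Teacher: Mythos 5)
The paper does not prove this theorem; it states it as a citation to Jochnowitz and to Serre--Tate, so there is no ``paper's own proof'' to match against. Your sketch is in the spirit of the standard argument (theta cycles, filtration, Hasse invariant), but two of its steps do not go through as written.

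First, the conclusion you draw from the theta cycle is not quite right. The cycle argument does show that some $\theta^{i}f$ has filtration at most $p+1$, but that form has away-from-$p$ eigenvalues $\ell^{i}a_{\ell}(f)$, i.e.\ it is a cyclotomic \emph{twist} of $f$. For it to be ``congruent away from $p$'' to $f$ you would need $i \equiv 0 \pmod{p-1}$, and the minimum of the filtration over the theta cycle is not in general attained at such an $i$. What the theta cycle gives you directly is the weaker statement ``congruent away from $p$ up to twist''; removing the twist (or showing it is inessential) requires a further argument that you have not supplied. This is exactly the gap between the twist-ambiguous weight $\le p+1$ and the twist-free bound $p^{2}+p$.

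Second, your mechanism for the $p^{2}+p$ bound does not work as stated. The reduction mod $p$ of the Frobenius characteristic polynomial $X^{2} - a_{p}X + \chi(p)p^{k-1}$ is simply $X^{2} - \bar{a}_{p}X$ for $k\ge 2$, which places no constraint at all on $\bar{a}_{p}$; it does not ``restrict $a_p$ to finitely many options.'' What actually constrains $\bar{a}_{p}$ once the away-from-$p$ eigensystem is fixed is local-global compatibility for $\bar{\rho}_{f}|_{D_{p}}$, but invoking that (or Serre weights) is both anachronistic relative to Jochnowitz's 1982 proof, which is self-contained and works entirely with the filtration, the $\theta$-operator and multiplication by the Hasse invariant, and in any case does not by itself produce the specific numerical bound $p^{2}+p$ --- that comes from bookkeeping of how much weight the twist by $\theta^{j}$ ($j\le p-2$) and the correction for the $T_{p}$-eigenvalue can cost. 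The ``Hecke-algebra stabilisation'' phrase is a placeholder for this bookkeeping, not an argument. You do correctly observe that, once the weight bound is in hand, finiteness follows from the finite-dimensionality of each $M_{k}(\Gamma_{1}(N),\F[p])$, and you are candid that the $p^{2}+p$ step is the hard part; the honest assessment is that the proposal identifies the right toolbox but does not yet close either the twist issue or the $T_{p}$-weight bound.
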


The first statement in \cref{finitemodp} is an instance of a ``finiteness result" for mod $p^m$ eigenforms, while the second statement is an instance of a ``weight bound" result. In \cite{KRW16} the author, together with Kiming and Wiese, studied a generalisation of \cref{finitemodp} and showed, with the help of Frank Calegari, that a weight bound result holds as well for strong eigenforms mod $p^m$.

\begin{theorem} There exists a constant $\kappa(N, p, m)$ depending only on $N, p$, and $m$ such that any eigenform of level $N$ is congruent mod $p^m$ to a modular form (not necessarily an eigenform) of level $N$ and weight at most $\kappa(N, p, m)$. 
\end{theorem}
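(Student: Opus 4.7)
The plan is to reduce to showing that the image of the natural map from $S(N, \Zc[p])$ to $(\Zc[p]/p^m)\dbr{q}$ is already covered by the $q$-expansions of modular forms of weight at most $\kappa(N,p,m)$. Since the theorem allows us to replace the eigenform $f$ by any modular form with the same $q$-expansion modulo $p^m$, this is precisely what must be proved.

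The first step is to produce a modular form $A$ of level $N$ and some explicit weight $w = w(p,m)$ with $A \equiv 1 \pmod{p^m}$. For $p \geq 5$ one takes $A = E_{p-1}^{p^{m-1}}$ of weight $w = (p-1)p^{m-1}$: the congruence $E_{p-1} \equiv 1 \pmod p$, combined with the standard lemma that for $p$ odd, $f \equiv 1 \pmod{p^k}$ implies $f^p \equiv 1 \pmod{p^{k+1}}$, shows by induction that $A \equiv 1 \pmod{p^m}$. For $p \in \{2,3\}$ one constructs a similar $A$ from appropriate powers or products of low-weight Eisenstein series on $\Gamma_1(N)$, accepting a larger value of $w$.

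The core of the argument is then to show that there exists $k_0 = k_0(N, p, m)$ with the following property: for every $k \geq k_0$ and every modular form $f$ of level $N$ and weight $k + w$ with coefficients in $\Zc[p]$, there exists a modular form $g$ of level $N$ and weight $k$ such that $f \equiv A g \pmod{p^m}$. Iterating this division by $A$ drives the weight of any given form, up to congruence modulo $p^m$, into the range $[0, k_0 + w)$. Since $A \equiv 1 \pmod{p^m}$, it follows that every $q$-expansion modulo $p^m$ coming from a level-$N$ modular form of any weight already arises from one of weight at most $\kappa(N,p,m) := k_0 + w$.

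The main obstacle, and the essential content of Calegari's contribution, is establishing this divisibility-by-$A$ statement. For $m = 1$ it is classical: $E_{p-1}$ lifts the Hasse invariant, division by the Hasse invariant is possible away from the supersingular locus, and the $q$-expansion principle identifies the resulting section with a genuine modular form. For $m \geq 2$ one must work geometrically on the modular curve over $\Zc[p]/p^m$ and control the obstruction to division by $A$ as a coherent cohomology class; this class vanishes above an explicit weight, producing $k_0(N,p,m)$. Once the lifting statement is in hand, assembling the finitely many residue classes of the weight modulo $w$ yields the uniform bound $\kappa(N,p,m)$.
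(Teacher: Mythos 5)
The core divisibility claim in your argument is false. You assert that there is a constant $k_0$ such that for every $k \geq k_0$ and every modular form $f$ of level $N$ and weight $k+w$ with $p$-integral coefficients, one can find a modular form $g$ of weight $k$ with $f \equiv Ag \pmod{p^m}$; since $A \equiv 1 \pmod{p^m}$, this is precisely the claim that every mod $p^m$ $q$-expansion arising from a modular form of level $N$ already arises from one of bounded weight. Already for $m=1$ this fails: the weight filtration of mod $p$ modular forms of level $N$ is unbounded (for instance, iterating the $\theta$-operator, which raises filtration by $p+1$, produces forms of arbitrarily high filtration, and so does the subalgebra $\F_p[\Delta]$ in level $1$). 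Your ``$m=1$ is classical'' paragraph also misrepresents the Hasse-invariant picture: dividing by the Hasse invariant produces a section only over the ordinary locus, and the $q$-expansion principle does not let you extend it to the whole modular curve unless the form vanishes to sufficient order along the supersingular locus --- equivalently, unless its filtration genuinely drops, which most forms do not satisfy. So no constant $k_0$ with the stated universal property can exist.

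This is not a repairable technical slip but a misidentification of where the difficulty lies. Your reduction discards the eigenform hypothesis at the very first step (``the theorem allows us to replace the eigenform $f$ by any modular form''), turning the statement into one about all modular forms, and that stronger statement is false. The actual result --- which this paper simply cites from \cite{KRW16} and does not reprove --- uses the eigenform hypothesis in an essential way: the argument there (Calegari's contribution) rests on slope estimates of Coleman--Wan type for Hecke eigenforms and on the local structure of the $p$-adic Hecke algebra/eigenvariety near each residual representation, which control how the system of eigenvalues can vary with weight modulo $p^m$. Any correct proof must use that $f$ is a Hecke eigenform; any approach that, like yours, tries to bound the filtration of arbitrary modular forms mod $p^m$ is doomed from the start.
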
 

The author showed in \cite{Ru17} that weight bounds do not exist for $dc$-weak eigenforms in general and expects that they do not exist even for weak eigenforms. 

On the other hand, the question of whether the mod $p$ finiteness result in \cref{finitemodp} can be generalised to higher prime powers seems much more difficult. The author, together with Kiming and Wiese, made the following conjecture in \cite{KRW16}.

\begin{conjecture}[Finiteness conjecture]\label{finitenessconjecture} For any $m \geq 1$, there are only finitely many congruence classes $\pmod{p^m}$ of eigenforms of level $N$. 
\end{conjecture}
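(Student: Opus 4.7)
The plan is to reduce the conjecture to the finiteness of a Hecke algebra acting on a finite module of mod $p^m$ modular forms. Starting from \cref{finitemodp}, the residual eigensystems of level $N$ fall into finitely many Galois orbits, so I would fix a finite flat extension $\mathcal{O}$ of $\Z[p]$ containing the image of each of them. The weight bound of [KRW16] then asserts that every mod $p^m$ eigensystem is congruent coefficient-by-coefficient to the $q$-expansion of some modular form of weight $\leq \kappa(N, p, m)$, which draws the problem into the finite $\Z/p^m$-module $M := \bigoplus_{k \leq \kappa} S_k(N, \Zc[p])/p^m$. If every mod $p^m$ eigensystem factored through the subring $\mathbb{T}_{\kappa}(N, p^m) \subseteq \End{M}$ generated by the Hecke operators, the conjecture would follow at once: $\mathbb{T}_{\kappa}(N, p^m)$ is a finite ring, hence admits only finitely many ring homomorphisms into any $\mathcal{O}/p^m$, and the image of any such homomorphism is one of finitely many finite subrings of $\Zc[p]/p^m$.

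The main obstacle is precisely this factoring step. The weight bound only produces a modular form $g$ of weight $\leq \kappa$ with $a_n(g) \equiv a_n(f) \pmod{p^m}$ for all $n$; since $g$ need not itself be an eigenform, the eigensystem $(a_n(f))$ is not automatically among those of $\mathbb{T}_{\kappa}(N, p^m)$. To close this gap I would try a Galois-theoretic approach: attach a deformation of the residual Galois representation to every mod $p^m$ eigensystem, prove that the corresponding deformation ring (with suitable local conditions at $p$) has only finitely many $\mathcal{O}/p^m$-points, and transfer this back to the Hecke side via a modularity lifting statement. This is cleanest in the ordinary, residually irreducible case, where Hida theory makes the local Hecke algebra a finite module over $\Z[p][[X]]$, which has only $p^{m-1}$ continuous $\Z/p^m$-valued characters; the non-ordinary and reducible situations appear substantially harder and are, I suspect, where the full conjecture ultimately breaks from present techniques.

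A realistic intermediate plan, following the paper, is to verify the conjecture case by case, starting with $(N, p) = (1, 2)$, where every residual system is that of $\Delta$ and the residually-fixed Hecke algebra has a particularly clean structure. For each $m$ one computes $\mathbb{T}_{\kappa}(1, 2^m)$ directly from $q$-expansions, uses $\theta$-operators and the known structure of the mod $p$ algebra of modular forms to control how eigensystems propagate across weights, and then enumerates the resulting ring homomorphisms into $\mathcal{O}/2^m$. Since $\kappa(1, 2, m)$ grows rapidly with $m$, I expect that extending the paper's result beyond $m=7$ will require genuinely new structural input on the localized Hecke algebra, rather than a brute-force enlargement of the computation.
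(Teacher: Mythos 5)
The statement you are asked about is a \emph{conjecture}, not a theorem: the paper explicitly labels \cref{finitenessconjecture} as an open conjecture (due to the author, Kiming, and Wiese) and offers no proof of it. The paper's actual main theorem (\cref{finiteness}) is a partial result: finiteness modulo $128$ in level $1$, obtained by an entirely concrete route (explicit generators of the local Hecke algebra plus modular-symbol congruences for $a_3$ and $a_5$). So there is no ``paper's own proof'' for you to match, and any would-be proof should be regarded with great suspicion.

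Within your sketch there is also a concrete error that would sink the first paragraph even if the weight-bound route were viable. You claim that a finite ring $\mathbb{T}_{\kappa}(N,p^m)$ ``admits only finitely many ring homomorphisms into any $\mathcal{O}/p^m$.'' That statement is false if ``any'' ranges over all residue rings of finite extensions, and that range is exactly what is at stake: the values $a_n(f)\bmod p^m$ live a priori in $\Zc[p]/p^m\Zc[p]$, which is not finite, and a fixed monic polynomial over $\Z/p^m\Z$ can have infinitely many roots there (e.g.\ $x^2=0$ has a root for every $\pi$ with $v_p(\pi)\geq m/2$). Consequently, finiteness of $\mathbb{T}_{\kappa}$ as a set does not by itself bound the number of mod~$p^m$ eigensystems; what one needs is control over the residue rings that actually occur, which is precisely Buzzard's \cref{buzzardquestion}. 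The paper records that $\mathbf{B} \Leftrightarrow \mathbf{Fin} + \mathbf{I}$, so the conjecture is essentially equivalent (modulo index conjectures) to a uniform ramification bound, not to a mere cardinality argument on a truncated Hecke algebra.

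Your identification of the obstacle is also slightly off target. Once $g$ has the same $q$-expansion as $f$ modulo $p^m$, the reduction $\bar g = \bar f$ \emph{is} a weak eigenform in the sense of the paper (the Hecke action on divided congruences is well defined modulo $p^m$), so the eigensystem does land on a bounded-weight form; the issue is not that $g$ fails to be an eigenform. The genuine difficulty is the unboundedness of the coefficient rings just described. Your last paragraph, proposing to attack specific $(N,p,m)$ via explicit generators of the localized Hecke algebra, is close to what the paper actually does for $(1,2,7)$, and your closing remark that new structural input is needed beyond $m=7$ agrees with the paper's own discussion of its limitations at the end of \cref{section:algorithm}.
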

Such a finiteness statement does not hold for $dc$-weak eigenforms (and not even for weak eigenforms), as was first shown in \cite{CE04}.

The evidence for \cref{finitenessconjecture} is not plentiful, but we list what is known. First, there is the following result of Hatada.
\begin{theorem}[\cite{Ha79}]\label{hatada2} Let $f$ be an eigenform of level $1$. Then $a_2(f)\equiv 0 \pmod{8\Zc[2]}$ and $a_\ell(f) \equiv 1 + \ell \pmod{8\Zc[2]}$ for all odd primes $\ell$. In particular, $f \equiv \Delta\pmod{8\Zc[2]}$, where $\Delta$ is the unique normalised cuspform of weight $12$ and level $1$. 
\end{theorem}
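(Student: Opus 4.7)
The overall plan is to show that $f \equiv \Delta \pmod{8\Zc[2]}$ as $q$-expansions; the stated Hecke eigenvalue congruences then follow by comparing coefficients and invoking the classical congruences $\tau(2) = -24 \equiv 0 \pmod 8$ and, for odd primes $\ell$, $\tau(\ell) \equiv \sigma_{11}(\ell) \equiv 1 + \ell \pmod 8$ (the latter using the Ramanujan congruence modulo $2^{11}$ together with $\ell^2 \equiv 1 \pmod 8$ for odd $\ell$, which gives $\ell^{11} \equiv \ell$).

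The key input is the pair of Eisenstein congruences
\[
E_4 = 1 + 240 \sum_{n \geq 1} \sigma_3(n) q^n \equiv 1 \pmod{16\Zc[2]}, \qquad
E_6 = 1 - 504 \sum_{n \geq 1} \sigma_5(n) q^n \equiv 1 \pmod{8\Zc[2]},
\]
coming from $16 \mid 240$ and $8 \mid 504$. Since $\Delta$ has no zeros on the upper half-plane, $g := f/\Delta$ is a holomorphic modular form in $M_{k-12}(1, \Zc[2])$ with $g(\infty) = 1$. Expressing $g$ in a basis of $M_{k-12}(1, \Zc[2])$ built from monomials $E_4^a E_6^b \Delta^c$ (e.g., the Miller basis), the Eisenstein congruences collapse each monomial modulo $8$ to $\Delta^c$, so
\[
g \equiv 1 + \sum_{c \geq 1} \lambda_c \Delta^c \pmod{8\Zc[2]}
\]
for some $\lambda_c \in \Zc[2]$, and consequently $f = \Delta g \equiv \Delta + \sum_{c \geq 1}\lambda_c \Delta^{c+1} \pmod{8\Zc[2]}$.

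The hard part will be to show $\lambda_c \equiv 0 \pmod 8$ for all $c \geq 1$. For small weights $k \leq 22$ this is immediate: $M_{k-12}(1, \Zc[2])$ is at most one-dimensional and is spanned by a single Eisenstein monomial $E_4^a E_6^b$, so $g = E_4^a E_6^b \equiv 1 \pmod{8\Zc[2]}$ without any cuspidal contribution. For $k \geq 24$ the eigenform hypothesis is essential---the cusp form $\Delta E_4^3 + \Delta^2 \in S_{24}(1, \Z)$ has $a_1 = 1$ but is not congruent to $\Delta$ mod $8$---and the plan is to induct on the weight, decomposing the cuspidal part of $g$ into a $\Zc[2]$-combination of eigenforms of weight $k - 12 < k$ to which the inductive hypothesis applies, and then using the relations $T_\ell f = a_\ell(f) f$ to constrain the resulting coefficients. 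A conceptually cleaner alternative sidesteps the induction via the $2$-adic Galois representation $\rho_f$: since the level is $1$, $\rho_f$ is unramified outside $2$, and Tate's theorem forces $\bar\rho_f \pmod 2$ to have trivial semisimplification; the classification of characters $G_\Q \to (\Z/8)^\times$ unramified outside $2$---which by Kronecker-Weber factor through $\Gal(\Q(\zeta_{2^\infty})/\Q) = \Z[2]^\times$---then forces $\operatorname{tr}(\rho_f(\Frob_\ell)) \equiv 1 + \ell \pmod 8$.
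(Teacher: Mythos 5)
The paper does not prove \cref{hatada2}; it cites Hatada's original argument, which, as the paper notes, studies the action of Hecke operators on lattices generated by periods of modular forms (essentially the modular-symbols computation formalized in \cref{section:algorithm}). Your proposal attempts a genuinely different, $q$-expansion-based proof, and the reduction you carry out is correct as far as it goes: using $E_4 \equiv 1 \pmod{16}$ and $E_6 \equiv 1 \pmod 8$ and the structure theorem for $M(\Z)$, you correctly collapse $g = f/\Delta$ to $1 + \sum_{c\geq 1}\lambda_c\Delta^c$ mod $8$, and you correctly observe that the eigenform hypothesis must now enter (the example $\Delta E_4^3 + \Delta^2$ is a good sanity check). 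However, both of your proposed routes to $\lambda_c \equiv 0 \pmod 8$ have genuine gaps, and this is exactly where the real content of Hatada's theorem lives.

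For the induction, the decomposition of the cuspidal part of $g$ into a combination of weight-$(k-12)$ eigenforms generally does \emph{not} have $\Zc[2]$-integral coefficients: mod-$2$ congruences between those eigenforms (which are pervasive, since every level-$1$ eigenform reduces to $\Delta$ mod $2$) put powers of $2$ in the denominators. Even granting integrality, the inductive hypothesis would only give you that the cuspidal part of $g$ is congruent mod $8$ to $\mu\Delta$ for some aggregate $\mu \in \Zc[2]$, hence $f \equiv \Delta + \mu\Delta^2 \pmod 8$, and you still have to show $\mu \equiv 0 \pmod 8$; the relation $T_\ell f = a_\ell(f) f$ doesn't obviously do this because $T_\ell$ does not preserve the filtration by powers of $\Delta$ mod $8$ (note $T_\ell \Delta^2$ is far from being $\equiv c\Delta^2$), and $a_\ell(f)$ mod $8$ is precisely the unknown.

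For the Galois route, the step you elide --- that $\rho_f \pmod 8$ is an extension of two characters so that $\operatorname{tr}\rho_f(\Frob_\ell) = \chi_1(\ell) + \chi_2(\ell)$ --- does not follow from Tate's mod-$2$ theorem. Tate gives that the mod-$2$ semisimplification is trivial; it does not give reducibility of the mod-$8$ (or even mod-$4$) reduction. A residually trivial $2$-adic representation unramified outside $2$ can well be irreducible, and non-abelian pro-$2$ extensions of $\Q$ unramified outside $2$ exist, so the Kronecker--Weber reduction to characters of $\Z[2]^\times$ does not apply to the representation itself. To make this approach rigorous one would need a substantive local--global deformation argument at $2$ (this is essentially what the Calegari--Emerton reformulation does), which is a different order of difficulty from what you've sketched.
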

Note that \cref{hatada2} is actually stronger than what is implied by \cref{finitenessconjecture}. Hatada also proved the following, which is also stronger than what is implied by \cref{finitemodp}.
\begin{theorem}[\cite{Ha79}]\label{hatada3} Let $f$ be an eigenform of level $1$. Then $a_3(f) \equiv 0 \pmod{3\Zc[3]}$ and $a_\ell(f) \equiv 1+\ell \pmod{3\Zc[3]}$ for all primes $\ell \not = 3$. In particular, $f \equiv \Delta\pmod{3\Zc[3]}$.
\end{theorem}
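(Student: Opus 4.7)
My plan is to prove the theorem in three stages: reduce any level-$1$ eigenform $f$ to $\Delta$ via Jochnowitz's theorem, verify the explicit congruences for $\tau$, and sharpen the resulting mod-$3$ congruence to a mod-$3\Zc[3]$ one. Applying the weight-bound statement of \cref{finitemodp} with $p = 3$ and $N = 1$, every level-$1$ eigenform $f$ is congruent mod $3$ to an eigenform of weight at most $3^2 + 3 = 12$. Since $S_k(\Gamma_1(1)) = 0$ for $k < 12$ and $S_{12}(\Gamma_1(1)) = \mathbb{C}\Delta$, this forces $f \equiv \Delta \pmod 3$. It thus suffices to establish the claimed congruences for $\tau$ and then upgrade the weak mod-$3$ congruence to a mod-$3\Zc[3]$ congruence.

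For the congruences on $\tau$: at $\ell = 3$, direct calculation gives $\tau(3) = 252 = 2^2 \cdot 3^2 \cdot 7$, so $v_3(\tau(3)) = 2$. For $\ell \neq 3$, the congruence $\tau(\ell) \equiv 1 + \ell \pmod 3$ reflects the reducibility of the mod-$3$ Galois representation attached to $\Delta$, namely $\bar{\rho}_{\Delta, 3} \cong \mathbf{1} \oplus \chi$ with $\chi$ the mod-$3$ cyclotomic character; taking traces of $\Frob_\ell$ gives the congruence. This reducibility stems from the exceptional identities $E_4 \equiv 1 \pmod{3\Zc[3]}$ (since $v_3(240) = 1$) and $E_6 \equiv 1 \pmod{9\Zc[3]}$ (since $v_3(504) = 2$), together with $1728\Delta = E_4^3 - E_6^2$, which produces an Eisenstein-type congruence at weight $12$ between $\Delta$ and a weight-$12$ Eisenstein series (whose eigenvalues are $1 + \ell^{11} \equiv 1 + \ell \pmod 3$ by Fermat).

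The most delicate step is the sharpening from $\pmod 3$ (mere positive, possibly fractional, $3$-adic valuation) to $\pmod{3\Zc[3]}$ (integer valuation $\geq 1$). My proposed approach is operator-theoretic: on the integral lattice $S_k(\Gamma_1(1); \Z[3])$, show that both $T_\ell - (1+\ell)\,\mathrm{id}$ (for $\ell \neq 3$) and $T_3$ are divisible by $3$. This operator-level divisibility forces every eigenvalue $a_\ell(f) \in \Zc[3]$ — regardless of ramification in its Hecke field — to satisfy $a_\ell(f) - (1+\ell) \in 3\Zc[3]$, as required; the corresponding statement for composite $n$ then follows by multiplicativity of Fourier coefficients. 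I would establish the operator congruence by induction on the weight $k$: the base case $k = 12$ reduces to direct calculation on $\Delta$, and the inductive step exploits the injection $S_{k-4} \xrightarrow{\cdot E_4} S_k$, which respects the Hecke action modulo $3$ because $E_4 \equiv 1 \pmod{3\Zc[3]}$ on $q$-expansions and because $\ell^{w-1} \equiv \ell^{w-5} \pmod 3$ for $\ell \neq 3$ and $w$ even. The main obstacle is controlling the cokernel of $E_4$-multiplication at each weight, i.e., understanding the filtration of mod-$3$ cusp forms in level $1$, which exploits the special structure of the graded ring of mod-$3$ modular forms in level $1$ arising from the collapse $E_4 \equiv E_6 \equiv 1 \pmod 3$.
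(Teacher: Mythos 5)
The paper does not reprove this theorem: it is cited from Hatada (\cite{Ha79}), whose argument, as the introduction notes, rests on the Hecke action on lattices generated by periods of modular forms. The paper revisits the same circle of ideas through Merel's modular symbols — see \cref{modcongruence} and the appendix, where the $T_2$ divisibility of \cref{hatada3} is verified explicitly by exhibiting polynomial identities.

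Your Stage 1 reduction via Jochnowitz and the Galois-theoretic picture in Stage 2 are fine, and you have correctly identified the crux: upgrading a ``$\pmod 3$'' congruence (positive, possibly fractional, valuation) to ``$\pmod{3\Zc[3]}$'' (valuation at least $1$). But Stage 3 rests on a false claim. The operator $T_2 - (1+2)\,\mathrm{id} = T_2 - 3\,\mathrm{id}$, which agrees with $T_2$ modulo $3$, does \emph{not} send $S_k(\Gamma_1(1);\Z[3])$ into $3\,S_k(\Gamma_1(1);\Z[3])$; modulo $3$ it is nilpotent but not zero. A counterexample is recorded in this paper's own proof of \cref{weightboundmod9}: in the Bella\"iche--Khare basis $\{m(a,b)\}$ for mod-$3$ level-$1$ cusp forms one has $m(1,0)=\Delta^2$ and $T_2\,m(1,0)=m(0,0)=\Delta$, so $T_2 \Delta^2 \equiv \Delta \not\equiv 0 \pmod 3$ in $S_{24}(\F[3])$, whence $T_2\bigl(S_{24}(\Z[3])\bigr) \not\subset 3\,S_{24}(\Z[3])$. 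Similarly $U(\Delta^3)\equiv\Delta$ shows the asserted divisibility for $T_3$ fails at weight $36$. Your $E_4$-multiplication induction cannot be repaired by any amount of attention to the cokernel, because the statement it aims to establish is false.

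The underlying point is that nilpotence of $T_\ell-(1+\ell)$ mod $3$ on the cusp-form lattice only implies every eigenvalue has positive valuation, which is the residual congruence of \cref{finitemodp}; it does not force valuation $\geq 1$ when the eigenvalue lives in a ramified extension. To close the argument one must work on a Hecke-stable $\Z[3]$-lattice on which the operator really is divisible by $3$, and this is precisely what the period lattice / modular symbols lattice $\set{S}^+_{w+2}$ of \cref{section:algorithm} provides: \cref{modcongruence} shows that once the finitely many polynomial identities hold, $(T_\ell - cI)(\set{S}^+_{w+2}) \subset 3\,\set{S}^+_{w+2}$, which then puts every eigenvalue in $c + 3\Zc[3]$ irrespective of ramification. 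This is the essential extra idea in Hatada's proof that your outline is missing.
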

Hatada's proof relies on the study of the action of Hecke operators on lattices generated by periods of modular forms. Aspects of his argument have been formalised in terms of geometry and cohomology in \cite{CE04}.

Buzzard (\cite{Bu05}) has also investigated related questions. For each eigenform $f \in S(N, \Qc[p])$, let $K_f := \Q[p][\{a_\ell(f) : \ell \nmid Np \}]$. 
\begin{question}[Buzzard]\label{buzzardquestion} Suppose $p \nmid N$. Does there exists a constant $B = B(N, p)$, depending only on $N$ and $p$, such that $[K_f:\Q[p]] < B(N,p)$ for all eigenforms $f$ of level $N$?
\end{question}
In light of \cref{finitemodp}, this is the same as asking whether the ramification index of $p$ in the fields $K_f$ is bounded independently of $f$. Kilford obtained the following result.
\begin{theorem}[\cite{Ki04}] Let $f$ be an eigenform of level $4$ and odd weight. Then $a_n(f) \in \Q[2]$ for all $n \geq 1$. 
\end{theorem}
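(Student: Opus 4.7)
The plan is to reduce to newforms via a character argument, then analyze the Hecke algebra on $S_k(\Gamma_1(4))$ using the explicit rational structure of $M_*(\Gamma_1(4))$, separating the analysis into CM and non-CM cases.

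First I would observe that an odd-weight cusp form on $\Gamma_1(N)$ requires a nebentypus $\chi$ with $\chi(-1) = -1$; for $N \mid 4$, the unique such character is the nontrivial quadratic character $\chi$ of conductor $4$, and no odd-weight cusp forms exist at levels $1$ or $2$ (no compatible character). Hence every eigenform $f \in S_k(\Gamma_1(4))$ for odd $k$ is a newform of level exactly $4$ with nebentypus $\chi$, which coincides with the Kronecker symbol $\bigl(\tfrac{-4}{\cdot}\bigr)$ attached to $K = \Q(i)$. Moreover, the graded ring $M_*(\Gamma_1(4))$ is generated as a $\Q$-algebra by two forms with rational $q$-expansions (for example $\theta^2$ and a weight-$2$ Eisenstein series), so each $S_k(\Gamma_1(4))$ admits a $\Q$-rational basis preserved by the Hecke operators; in particular, the coefficient field $K_f$ of any eigenform $f$ is a number field.

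Next I would separate the eigenforms into CM and non-CM cases. For weights $k \equiv 1 \pmod 4$, I would construct CM eigenforms from Hecke characters $\psi$ of $K$ of infinity type $(k-1, 0)$ with trivial conductor; the explicit formulas $a_\ell(f_\psi) = \psi(\pi) + \psi(\bar\pi)$ for split primes $\ell = \pi\bar\pi$, $a_\ell(f_\psi) = 0$ for inert $\ell$, and $a_2(f_\psi) = \psi((1+i))$ then immediately give $a_n(f_\psi) \in \Q \subset \Q[2]$. For non-CM eigenforms, the key would be an inner-twist argument: $f \otimes \chi$ lies in the same space as $f$, forcing $f \otimes \chi = f^\sigma$ for some Galois automorphism $\sigma$ of $K_f$. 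Such an inner twist constrains $K_f$ to be an at-most-quadratic abelian extension of its $\sigma$-fixed subfield. Combining this with the Atkin-Lehner analysis at $2$, which pins $a_2(f)$ to $\pm 2^{(k-1)/2} \in \Q$, forces the ramification of $2$ in $K_f$ to be consistent with an embedding $K_f \hookrightarrow \Q[2]$.

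The main obstacle is the non-CM step. It requires verifying that every non-CM newform in $S_k(\Gamma_1(4))$ admits the required inner twist by $\chi$ and that the involution it induces on $K_f$ places $K_f$ inside $\Q[2]$ under the chosen embedding. This is a delicate interplay between the global twist symmetry (coming from the $K = \Q(i)$-origin of $\chi$) and the local $2$-adic behavior at the ramified prime $2$: the Atkin-Lehner constraints at $2$ must conspire with the inner twist to yield a quadratic coefficient field $K_f / \Q$ of the form $\Q(\sqrt{d})$ with $d$ a square in $\Q[2]$ (equivalently $d \equiv 1 \pmod 8$ up to squares), rather than an arbitrary quadratic extension. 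Pinning down this arithmetic compatibility is where the argument becomes genuinely subtle, and likely requires either an explicit analysis of the $U_2$-action on the $\Q$-rational basis or a mod-$2$ Galois representation computation showing $\bar\rho_f$ has image inside a dihedral group compatible with the $\Q(i)$-structure.
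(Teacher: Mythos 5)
The paper itself does not prove this theorem; it is cited directly from Kilford's 2004 paper on slopes of $2$-adic overconvergent modular forms, and no proof appears in the present text. So there is no ``paper proof'' to match your sketch against, and I will evaluate it on its own terms.

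Your setup is sound: for odd weight the nebentypus on $\Gamma_1(4)$ must be the odd quadratic character $\chi$ of conductor $4$, there are no odd-weight cusp forms at level $1$ or $2$, and the graded ring $M_*(\Gamma_1(4))$ is a polynomial ring in two rational generators. The CM analysis for $k \equiv 1 \pmod 4$ is also essentially right. However, the non-CM step has a genuine gap, and you acknowledge as much, so the proposal as it stands is not a proof. Concretely, two things are missing. First, the assertion that $f \otimes \chi = f^\sigma$ for some $\sigma \in \Gal(K_f/\Q)$ is not justified. It is true (by a local computation with the ramified principal series at $2$) that $f \otimes \chi$ again has exact level $4$ and nebentypus $\chi$, so $f \otimes \chi$ is another newform in the same space; but there is no a priori reason for it to lie in the Galois orbit of $f$ rather than in a different orbit. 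The inner-twist relation is a nontrivial hypothesis, not an automatic consequence of the twist landing in the same space. Second, and more importantly, even granting an inner twist by $\chi$, the conclusion that $K_f$ admits an embedding into $\Q[2]$ does not follow: inner twists constrain $K_f$ to be abelian over the fixed field $F$ of the twisting automorphisms and relate the coefficients to $F$ and Gauss sums, but they say nothing directly about the splitting behaviour of $2$ in $K_f$. You flag this yourself as the ``genuinely subtle'' point, and leaving it unresolved means the central content of the theorem is unproved.

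For comparison, Kilford's actual argument is quite different in spirit: it is an explicit, essentially computational analysis of the Hecke action on a rational basis of $S_k(\Gamma_1(4),\chi)$, building on the structure of $M_*(\Gamma_1(4))$ and the $U_2$-action, in the tradition of his and Buzzard's work on slopes of overconvergent forms. It does not rely on inner twists, and it establishes $\Q[2]$-rationality by directly exhibiting the Hecke eigenvalues (or the factorization of characteristic polynomials of $U_2$) over $\Q[2]$. If you want to pursue your route, the missing piece is to show that the involution on newforms induced by $\otimes\chi$ actually preserves each Galois orbit and that the resulting automorphism $\sigma$ of $K_f$ has fixed field into which $2$-adic completion forces $K_f \hookrightarrow \Q[2]$; absent that, the argument should be replaced by the explicit Hecke-algebra computation.
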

The author has personally checked that the primes $7,11, 17, 29, 53$, and $61$ are unramified in $K_f$ for all eigenforms $f$ of level $1$ and of weight $\leq 530$ and that $3$ is unramified for all $k \leq 1000$. Buzzard's question and the finiteness conjecture \cref{finitenessconjecture} are connected by the following result. Let $\bf{B}$ be the statement ``the answer to Buzzard's question is yes", and $\bf{Fin}$ the statement ``The finiteness conjecture \cref{finitenessconjecture} is true". In \cite{KRW16}, the following result is shown.
\begin{theorem} $\bf{B} \Leftrightarrow \bf{Fin} \mbox{ } + \bf{I}$ where $I$ is a collection of index conjectures (see \cite{KRW16}, \S2.3, for further details).  
\end{theorem}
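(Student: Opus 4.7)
The plan is to establish the two directions of the equivalence separately, working within the framework of Hecke algebras as $\Z[p]$-algebras. Write $\set{T}(N)$ for the $\Z[p]$-algebra generated by all Hecke operators $T_n$ acting on cusp forms of level $N$ of all weights. An eigenform $f$ corresponds to a $\Z[p]$-algebra homomorphism $\lambda_f \colon \set{T}(N) \to \order{K_f} \subseteq \Zc[p]$, and the congruence $f \equiv g \pmod{p^m}$ is equivalent to $\lambda_f \equiv \lambda_g$ after reduction along $\Zc[p] \to \Zc[p]/p^m\Zc[p]$.

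For the direction $\mathbf{B} \Rightarrow \mathbf{Fin}$, I would first invoke the weight bound theorem to reduce to forms of bounded weight: every eigenform $f$ of level $N$ is mod-$p^m$ congruent to a modular form in the $\Zc[p]$-module $M_{\leq \kappa(N,p,m)}(\Gamma_1(N); \Zc[p])$, which has bounded $\Zc[p]$-rank. Under the hypothesis $[K_f : \Q[p]] \leq B$ uniformly in $f$, there are only finitely many such extensions $K_f/\Q[p]$ up to isomorphism, so the union of their rings of integers reduces to a finite subset of $\Zc[p]/p^m\Zc[p]$. Combining these two finitenesses shows that only finitely many $q$-expansions mod $p^m$ can arise from weight-bounded representatives, and therefore only finitely many eigensystems mod $p^m$.

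The converse $\mathbf{Fin} + \mathbf{I} \Rightarrow \mathbf{B}$ is subtler. From $\mathbf{Fin}$ alone, the Hecke algebra $\set{T}(N) \tensor \Z[p]/p^m$ receives only finitely many distinct maps of the form $\lambda_f \bmod p^m$. Passing to the inverse limit over $m$ organizes the systems $\lambda_f$ into finitely many $p$-adic components, but this does not a priori bound $[K_f : \Q[p]]$: within a single component one could in principle find $\lambda_f$'s with values in rings of unbounded $\Z[p]$-rank. The index conjectures $\mathbf{I}$ are designed precisely to preclude this by bounding, in terms of $N, p,$ and $m$, the index of the subring $\Z[p][\lambda_f(T_\ell) : \ell \leq L]$ inside $\order{K_f}$ for some effective $L = L(N, p, m)$. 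Once this index is controlled, finiteness of the residual possibilities forces $\order{K_f}$ to have bounded $\Z[p]$-rank, hence $[K_f : \Q[p]]$ to be bounded.

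The main obstacle lies in this converse direction. Without $\mathbf{I}$, one cannot rule out families of eigenforms with coefficient fields of unbounded degree whose reductions accidentally collapse into finitely many mod-$p^m$ classes; showing that the specific formulation of the index conjectures in \cite{KRW16} is strong enough to bridge this gap (rather than merely necessary) is where the technical care is concentrated. The implication $\mathbf{B} \Rightarrow \mathbf{Fin}$, by contrast, reduces to a bookkeeping combination of the weight bound theorem, a Sturm-type argument, and elementary finite-size arithmetic, and should go through without substantial new input.
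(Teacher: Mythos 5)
The paper does not prove this theorem; it is stated and cited from \cite{KRW16}, so there is no in-paper proof against which to compare your argument.

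Your proposal has a structural gap: the biconditional $\mathbf{B} \Leftrightarrow \mathbf{Fin} + \mathbf{I}$ unpacks into three implications, namely $\mathbf{B} \Rightarrow \mathbf{Fin}$, $\mathbf{B} \Rightarrow \mathbf{I}$, and $(\mathbf{Fin} \wedge \mathbf{I}) \Rightarrow \mathbf{B}$, and you sketch only the first and the third, never touching $\mathbf{B} \Rightarrow \mathbf{I}$. This omission is not a formality: a uniform bound on $[K_f:\Q[p]]$ identifies, via Krasner's lemma, finitely many possible local rings $\order{K_f}$, but it does not by itself control the index of a subring of $\order{K_f}$ generated by finitely many Hecke eigenvalues, which is what the index conjectures assert. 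Without this direction, filling in every other step would still only give $\mathbf{Fin} + \mathbf{I} \Rightarrow \mathbf{B} \Rightarrow \mathbf{Fin}$, not the stated equivalence.

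Regarding the directions you do address: the sketch of $\mathbf{B} \Rightarrow \mathbf{Fin}$ (reduce to bounded weight via the weight bound theorem, then use finiteness of bounded-degree extensions of $\Q[p]$ to confine all coefficients to a single finite quotient ring) is correct in outline. The sketch of $(\mathbf{Fin} \wedge \mathbf{I}) \Rightarrow \mathbf{B}$ identifies the difficulty but, as you concede, does not resolve it, and you should verify that your paraphrase of $\mathbf{I}$ (bounding the index of $\Z[p][\lambda_f(T_\ell) : \ell \leq L]$ in $\order{K_f}$ for some effective $L$) actually matches the formulation in \cite{KRW16} \S 2.3 before attempting to close the gap.
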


Finally, we state a conjecture due to Coleman and Stein (\cite{CS04}) which is a precise formulation of a special case of \cref{finitenessconjecture}.

\begin{conjecture}[Coleman-Stein]\label{colemansteinconjecture} There are exactly five residue classes in $(\Zmod{9}{})\dbr{q}$ of normalised eigenforms in $S_k(\Gamma_0(N))$ where $k \geq 1$ and $N = 1,3, 9$. They all appear in level $1$ and are given in the following table.

{\label{cstable}\rm\tt \begin{center}\begin{tabular}{|c|l|}\hline
{\rm Weight} & {\rm [ $a_2, a_3, \ldots, a_{43} \mod 9\Zc[3]$ ]}\\\hline
%          2  3  5  7  11 13 17 19 23 29 31 37 41 43
%          2     2  1  2  1  2  1  2  2  1  1  2  1
 12    & [ 3, 0, 6, 5, 3, 8, 0, 2, 6, 3, 8, 2, 6, 5 ]\\
 16    & [ 0, 0, 0, 2, 0, 2, 0, 2, 0, 0, 2, 2, 0, 2 ]\\
 20    & [ 6, 0, 3, 8, 6, 5, 0, 2, 3, 6, 5, 2, 3, 8 ]\\
 24    & [ 6, 0, 3, 5, 6, 8, 0, 2, 3, 6, 8, 2, 3, 5 ]\\
 32    & [ 3, 0, 6, 8, 3, 5, 0, 2, 6, 3, 5, 2, 6, 8 ]\\
%        [ 0, 0, 0, 8, 0, 5, 0, 2, 0, 0, 5, 2, 0, 8 ]
\hline\end{tabular}\end{center}}
\end{conjecture}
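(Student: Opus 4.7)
The plan is to reduce \cref{colemansteinconjecture} first to a finiteness statement about a mod-$9$ Hecke algebra at level $1$, and then to the finiteness of a single Hecke eigenvalue. By Hatada (\cref{hatada3}), every level-$1$ eigenform $f$ satisfies $f \equiv \Delta \pmod{3\Zc[3]}$, so the reduction mod $9$ of the system $(a_n(f))_n$ lies in the fixed mod-$3$ residue class of $\Delta$. Writing $\set{T}$ for the level-$1$ cuspidal Hecke algebra and $\idl{m}$ for the unique maximal ideal of residue characteristic $3$, residue classes mod $9$ of level-$1$ eigensystems correspond bijectively to $\set{Z}/9\set{Z}$-algebra maps $\set{T}_\idl{m}/9 \to \set{Z}/9\set{Z}$. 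Finiteness of the set of residue classes (the first half of the conjecture) therefore reduces to showing that $\set{T}_\idl{m}/9$ is a finite ring.

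I would attack finiteness of $\set{T}_\idl{m}/9$ by combining two inputs. First, the Euler-product relations express each $a_n$ as a polynomial in the $a_\ell$ and the weight evaluations $\ell^{k-1}$; and modulo $9$ the latter depend only on $k \bmod 6$ (since $(\set{Z}/9\set{Z})^{\times}$ has order $6$). So $\set{T}_\idl{m}/9$ is generated as a $\set{Z}/9\set{Z}$-algebra by the images of finitely many $T_\ell$ together with an element tracking the weight mod $6$. Second, Hatada's congruence $a_\ell \equiv 1+\ell \pmod{3\Zc[3]}$ forces each such generator to take only one of three possible residues mod $9$. I would then try to show, via a bootstrapping argument using the Hecke recursion $a_{\ell^{r+1}} = a_\ell a_{\ell^r} - \ell^{k-1} a_{\ell^{r-1}}$ together with the Hatada constraints on the other $T_\ell$, that the full eigensystem mod $9$ is determined by a single eigenvalue (say $a_2 \bmod 9$) and the weight mod $6$. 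This is the reduction to a single eigenvalue indicated in the abstract.

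For the enumeration step I would compute mod-$9$ eigensystems for all weights $k \le K_0$ with $K_0$ an explicit small bound (using the Eichler-Selberg trace formula or an explicit integral basis of $S_k(\SL_2(\set{Z}))$), check that only the five classes in the table actually appear, and invoke the reduction above to exclude further classes in higher weight. For the levels $3$ and $9$ part of the conjecture, oldforms contribute only the five level-$1$ classes via the degeneracy maps; remaining newforms at levels $3$ and $9$ would be handled by an analogue of Hatada's period-lattice argument adapted from $\SL_2(\set{Z})$ to $\Gamma_0(3)$ and $\Gamma_0(9)$.

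The main obstacle is the finiteness claim underlying the reduction: showing that $a_2 \bmod 9$ takes only finitely many values as $f$ ranges over level-$1$ eigenforms of arbitrary weight. This is \cref{finitenessconjecture} in the case $N=1$, $p=3$, $m=2$, and by the results of \cite{KRW16} it is essentially equivalent to a uniform bound on the ramification of $3$ in the Hecke fields $K_f$, i.e.\ a special case of \cref{buzzardquestion}. I see no way to establish this unconditionally with current tools, so I expect the honest output of this plan to be conditional: \cref{colemansteinconjecture} holds provided the set $\{a_2(f) \bmod 9 : f \text{ an eigenform of level } 1\}$ is finite.
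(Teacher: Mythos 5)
The statement you were asked to prove is a \emph{conjecture}; the paper does not prove it. What the paper actually accomplishes (Theorem~\ref{finitenessa2}) is a \emph{reduction} of the level-$1$ part of \cref{colemansteinconjecture} to the single-eigenvalue Conjecture~\ref{a2conj} about $a_2 \bmod 9$. Your plan ends at essentially the same place---you correctly identify that the only honest output is conditional on finiteness of $\{a_2(f) \bmod 9\}$---so your instincts about the endpoint are right. Also, the levels $3$ and $9$ part of the conjecture, which you sketch handling via oldforms and a Hatada-style argument for $\Gamma_0(3)$, $\Gamma_0(9)$, is not addressed in the paper at all.

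However, the mechanism you propose for the reduction has two genuine gaps. First, the ``bootstrapping argument using the Hecke recursion $a_{\ell^{r+1}} = a_\ell a_{\ell^r} - \ell^{k-1}a_{\ell^{r-1}}$ together with the Hatada constraints'' cannot show that the eigensystem mod $9$ is a function of $a_2$ and $k\bmod 6$. The recursion relates $a_{\ell^r}$ to $a_\ell$ for the \emph{same} $\ell$ but gives no relations among the $T_\ell$ for varying $\ell$, and Hatada's congruences constrain $a_\ell$ only mod $3$, not mod $9$. What is needed, and what the paper supplies, is an explicit finite generating set for the big Hecke algebra $\set{T}(3,\Z[3])$: namely $U$, $T_2$, $1+T_7$, and $t_\Lambda=[1+3]$ (Theorem~\ref{heckealggen}). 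That result is \emph{not} formal---it is proved by first computing, via Deo's structural results and the Bella\"iche--Khare nilpotence basis (Section~\ref{section:wefmod9}), all $dc$-weak eigenforms in $D(\Zmod{9}{})$ and then running a tangent-space argument at the maximal ideal (Lemma~\ref{modp2homs}). Second, once generators are known, the reduction to $a_2$ alone still requires pinning down the other generators unconditionally: $a_3 \equiv 0\pmod{9\Zc[3]}$ comes from Hatada, $\lambda$ is determined by the weight, but the $T_7$-eigenvalue must be controlled by a separate unconditional congruence, $a_7 \equiv 5,8,2 \pmod{9\Zc[3]}$ according to $k\bmod 6$ (Proposition~\ref{actiont7}), which the paper proves via Merel-style modular symbols and explicit polynomial identities. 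Your plan omits the $T_7$ input entirely, and without it one cannot conclude that $a_2$ is the lone remaining unknown.
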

Coleman and Stein verified \cref{colemansteinconjecture} up to weight $74$ in level $1$ and weight $40$ in levels $3$ and $9$. This conjecture was discussed in \cite{CE04}, but the authors remarked that their methods were not able to yield a proof.

This paper is a continuation of the study of strong, weak, and $dc$-weak eigenforms modulo prime powers and the finiteness conjecture (\cref{finitenessconjecture}).

\subsection{Results and strategy} 

We will consider, for any finitely generated $\Z[p]$-algebra $R$, the spaces $D(N, R)$ of divided congruences of level $N$ and coefficients in $R$ defined by Katz in \cite{Ka75}. Katz's theory of divided congruences, the corresponding Hecke action, and the notions of strong, weak, and $dc$-weak eigenforms with coefficients in a ring $R$ will be recalled in \cref{section:modformdc} and \cref{section:swdc}. We will then turn to the question of comparing weak and $dc$-weak eigenforms. This question was raised in \cite{CKW13} and was given a partial answer in \cite{Ru17}. In \cref{section:compweakdcweak}, we will give a complete answer. We will show that a single Hecke operator $t_\Lambda$, corresponding to a weight twist, is able to identify weak eigenforms. 

\newtheorem*{mainthmIII}{Theorem \ref{dcweakisweak}}
\begin{mainthmIII} Suppose $p \geq 3$. Let $K$ be a finite extension of $\Q[p]$, $\order{K}$ its ring of integers, and $m \geq 1$ an integer. Suppose $f \in D(N,\order{K}/\pi_K^m \order{K})$ a $dc$-weak eigenform. Then $f$ is weak if and only if the eigenvalue of $t_\Lambda$ corresponding to $f$ lies in the image of $\Z[p]$ in $\order{K}/\pi_K^m \order{K}$. 
\end{mainthmIII}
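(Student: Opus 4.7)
The plan is to exploit the diamond action of $\units{\Z[p]}$ on Katz's divided congruence space $D(N,R)$: for each $\lambda \in \units{\Z[p]}$ one has an operator $t_\lambda$ acting on the image of a weight-$k$ modular form as multiplication by $\lambda^k$. Since $p \geq 3$, the group $\units{\Z[p]}$ is topologically cyclic, and $\Lambda$ denotes a topological generator, so that $t_\Lambda$ recovers the operator in the theorem statement.

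The ``only if'' direction is immediate: if $f$ is weak and $f \equiv F \pmod{\pi_K^m}$ for a modular form $F$ in $M_k(\Gamma_1(N),\order{K})$ of weight $k$, then $t_\Lambda f = \Lambda^k f$ in $R := \order{K}/\pi_K^m \order{K}$, and $\Lambda^k \in \Z[p]$ visibly lies in the image of $\Z[p]$.

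For the ``if'' direction, let $a$ denote the $t_\Lambda$-eigenvalue of $f$ and assume $a$ is the reduction of some $b \in \Z[p]$. The operator $t_\Lambda$ is invertible on $D(N,R)$ (with inverse $t_{\inv{\Lambda}}$), so $a \in R^\times$ and therefore $b \in \units{\Z[p]}$. Since $\Lambda$ topologically generates $\units{\Z[p]}$ and $R^\times$ is finite, one can find a positive integer $k$ --- arbitrarily large in its class modulo the order of $\Lambda$ in $R^\times$ --- with $\Lambda^k \equiv b \pmod{\pi_K^m}$. The crux is then to identify the $a$-eigenspace of $t_\Lambda$ in $D(N,R)$ with the image of $\sum_{k' \equiv k} M_{k'}(\Gamma_1(N),\order{K})$, the sum running over integers $k'$ congruent to $k$ modulo the order of $\Lambda$ in $R^\times$. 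Granting this, $f$ is the reduction of a sum of classical modular forms of varying weights $k' \equiv k$, and by multiplying each summand by a suitable power of $E_{p-1}$ (whose $p^{m-1}$-th power reduces to $1 \pmod{\pi_K^m}$) one collapses the sum to a single weight $k$, producing a classical lift of $f$ and showing that $f$ is weak.

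The main obstacle is the eigenspace identification. I would derive it from Katz's description of $D(N,R)$ recalled in \cref{section:modformdc}, together with the $\units{\Z[p]}$-eigenspace decomposition of Serre's ring of $p$-adic modular forms: non-zero eigenspaces correspond to characters of $\units{\Z[p]}$ factoring through a finite quotient of $R^\times$, and the hypothesis that $a$ lies in the image of $\Z[p]$ is exactly what is needed to ensure that the corresponding character is the reduction of an integer-weight character $\lambda \mapsto \lambda^k$. The oddness of $p$ is essential, since only then is $\units{\Z[p]}$ topologically cyclic and a single operator $t_\Lambda$ suffices to detect the weight; for $p = 2$ one would need an additional twist operator to separate weights within the correct congruence class.
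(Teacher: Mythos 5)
Your ``only if'' direction is fine and matches the intended argument (which reduces to \cref{comparedcweakI}). Your ``if'' direction, however, has a genuine gap, and you have already located it yourself: the claim that the $a$-eigenspace of $t_\Lambda$ in $D(N,R_m)$ coincides with the image of $\bigoplus_{k' \equiv k} M_{k'}(\Gamma_1(N),\order{K})$ is not something you derive --- you simply assert that it ``would'' follow from Katz's description of $D(N,R)$ and an alleged $\units{\Z[p]}$-eigenspace decomposition of the ring of $p$-adic modular forms. No such clean eigenspace decomposition is available here: $\units{\Z[p]}$ (or rather the finite quotient through which it acts on $D(N,R_m)$) is a $p$-group acting on a $\pi_K$-torsion module, and one cannot split such an action into eigenspaces. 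Katz's theorem \cref{katzdcmodp} gives the needed identification only at the residue-field level ($m=1$); extending it to $m \geq 2$ is exactly the non-trivial content of the theorem, not an ingredient you can quote.

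The paper's proof supplies precisely the argument your proposal is missing, by induction on $m$. Assume the reduction of $f$ to $D(N,R_{m-1})$ lies in some $S_k(N,R_{m-1})$; lift it to a classical form $f_{m-1}\in S_k(N,R_m)$, twist $f_{m-1}$ by a power of $E_h$ (where $h = 4$ if $p=3$ and $h = p-1$ otherwise) so that its weight agrees with $\beta(f)$ modulo $p^{\gamma(m)-1}$ without disturbing the congruence mod $\pi_K^{m-1}$, and write the discrepancy as $\pi_K^{m-1}g$. Applying $[1+p]$ and using the $t_\Lambda$-eigenvalue shows that $g$ modulo $\pi_K$ is $[1+p]$-invariant, hence lies in $S(N,\F[K])$ by \cref{katzdcmodp}; lifting that back up shows $\pi_K^{m-1}g$, and therefore $f$, lies in $S(N,R_m)$. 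Only then does \cref{comparedcweakI} apply. Your Eisenstein-series collapse at the end is the right closing move and indeed appears in the paper's argument, but it cannot be invoked before the hard inductive step is carried out. So your proposal correctly identifies what needs to be shown but replaces the substantive induction by an appeal to a structural decomposition that does not exist.
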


\cref{dcweakisweak} does not hold when $p = 2$ and $N = 1$. In \cref{section:wefmod4}, we calculate all $dc$-weak eigenforms of level $1$ with coefficients in $\Zmod{4}{}$. Calculating these weak eigenforms is made possible by the existence of explicit weight bounds for weak eigenforms with coefficients in $\Zmod{2}{m}$ that were obtained in \cite{KRW16} using Nicolas-Serre theory (\cite{NS12}). Curiously, this provides the first known examples of $dc$-weak eigenforms that are not weak. 

We use a similar argument in \cref{section:wefmod9} to calculate all $dc$-weak eigenforms of level $1$ with coefficients in $\Zmod{9}{}$. An analogue of Nicolas-Serre theory has not yet been developed for modular forms mod $3$, so we use results of \cite{BK15} as a substitute. 

Next, we come to the main theorem of this paper. 
\newtheorem*{mainthmI}{Theorem \ref{finiteness}}
\begin{mainthmI} If $f$ is a level $1$ eigenform of weight $k$ and $\ell$ is an odd prime then $a_\ell(f) \equiv 1+\ell^{k-1} \pmod{128 \Zc[2]}$. Consequently, there are only finitely many congruence classes $\pmod{128\Zc[2]}$ of eigenforms of level $1$. 
\end{mainthmI}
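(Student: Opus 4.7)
The strategy I would take is to extend Hatada's mod $8$ congruence (\cref{hatada2}) to mod $128 = 2^7$ by exploiting the fact that $1 + \ell^{k-1}$ is precisely the $\ell$-th Hecke eigenvalue of the Eisenstein series $E_k$. The statement is thus equivalent to saying that every level $1$ cuspidal eigenform of weight $k$ has its mod $128$ Hecke eigensystem at odd primes matching that of $E_k$; equivalently, the associated mod $128$ Galois representation of $f$ has trace of Frobenius at odd $\ell$ equal to $1 + \ell^{k-1}$, i.e.\ it is ``Eisenstein mod $128$'' on the prime-to-$2$ part.

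First I would exploit weight periodicity: $\ell^{k-1} \pmod{128}$ depends only on $k \pmod{32}$ since $(\Z/128\Z)^\times \cong \Z/2 \times \Z/32$ has exponent $32$. Combined with the weight bound for weak eigenforms of level $1$ modulo $2^m$ established in \cite{KRW16} via Nicolas-Serre theory, this reduces the infinite family of congruences to a finite problem: verify the congruence for all eigenforms of weight at most $\kappa(1, 2, 7)$ distributed among the $32$ residue classes of $k \pmod{32}$.

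Next I would invoke the explicit description of the Hecke algebra $\set{T}(1, \Z[2]/128)$ acting on the divided congruences $D(1, \Z[2]/128)$, using the extension of Nicolas-Serre theory to mod $2^m$ already employed in \cref{section:wefmod4}. Since Hatada's theorem says the only cuspidal eigensystem mod $8$ is that of $\Delta$, the task is to show that this single mod $8$ system lifts to exactly the $32$ Eisenstein-like eigensystems $\{T_\ell \mapsto 1 + \ell^{k-1}\}_{k \pmod{32}}$ on passing to mod $128$. Within each weight class mod $32$, the finitely many low-weight eigenforms would be verified by direct computation (or by an algebraic argument using the Nicolas-Serre theta-cycle), and periodicity then propagates the result to all higher weights in that class.

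The main obstacle is controlling the algebraic structure of the mod $128$ Hecke algebra: unlike the local ring appearing modulo $8$, the algebra modulo $128$ has multiple maximal ideals, and ruling out further ramification—namely any additional eigensystems beyond the $32$ Eisenstein-like ones—when lifting from $\Z[2]/8$ to $\Z[2]/128$ requires careful use of the Nicolas-Serre recursion and of Hecke relations such as $T_\ell T_{\ell^n} = T_{\ell^{n+1}} + \ell^{k-1} T_{\ell^{n-1}}$ to propagate an eigenvalue congruence at a single prime $\ell$ to all odd primes. Making the weight bound $\kappa(1, 2, 7)$ explicit enough for tractable verification is likely the most computationally demanding part.
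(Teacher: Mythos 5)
Your broad strategy — exploit the periodicity of $\ell^{k-1} \pmod{128}$ in $k\pmod{32}$, determine the structure of the mod $2^m$ Hecke algebra, and reduce to a finite verification — is in the right spirit, but the concrete mechanism is missing at the key step, and one ingredient is omitted entirely.

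The crux of the paper's argument is not a weight bound from Nicolas--Serre theory but rather the Hecke algebra generator result (\cref{heckealggen}): $\set{T}(2,\Z[2])$ is generated over $\Z[2]$ by $U$, $T_3$, $T_5$, and $t_\Lambda$. This is derived from the mod $4$ computation of \cref{section:wefmod4} via the $\idl{m}/\idl{m}^2$ dimension count of \cref{modp2homs}, not from a mod $128$ analysis. Once you have these generators, the logic is: if $a_2(f)\equiv 0$, and if the eigenvalues of $T_3$, $T_5$, and $t_\Lambda$ all lie in $\Zmod{128}{}$, then every $a_n(f)\pmod{128}$ lies in $\Zmod{128}{}$, so $f$ reduces to a weak eigenform in $S(\Zmod{128}{})$, and by $E_k$-twisting it suffices to treat $12\le k\le 46$, $k\ne 14$. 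The hard analytic content is proving the $T_3$ and $T_5$ eigenvalue congruences $a_3(f)\equiv 1+3^{k-1}$, $a_5(f)\equiv 1+5^{k-1}\pmod{128\Zc[2]}$ (\cref{actiontmod32}), which the paper establishes via Merel's modular symbols: explicit polynomial identities in $\left(\Zmod{128}{}\right)[X,Y]$, one for each $(i_0,w_0)\in(2\Zmod{32}{})^2$, found and verified algorithmically (\cref{section:algorithm}). Your proposal offers no replacement for this step. The Hecke relation $T_\ell T_{\ell^n}=T_{\ell^{n+1}}+\ell^{k-1}T_{\ell^{n-1}}$ you cite only relates powers of a single prime $\ell$; it gives no way to propagate a congruence at $\ell=3$ to $\ell=5$, let alone to all odd $\ell$. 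That propagation is precisely what the generator statement accomplishes, and it requires independently establishing the $T_3$ and $T_5$ congruences.

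A second gap: your plan to ``verify the congruence for all eigenforms of weight at most $\kappa(1,2,7)$ and then propagate by periodicity'' is circular. The weight bounds of \cite{KRW16} apply to weak eigenforms with coefficients in $\Zmod{2}{m}$, but to conclude that the mod $128$ reduction of an arbitrary high-weight eigenform $f$ is such a weak eigenform (i.e.\ $\bar f\in S(\Zmod{128}{})$) one already needs to know that the generator eigenvalues $a_3(\bar f)$, $a_5(\bar f)$ lie in $\Zmod{128}{}$ — which is the content of \cref{actiontmod32}, the very thing being sought. Finally, you do not address $a_2(f)$. The first sentence of the theorem is silent on $a_2$, so the ``consequently'' requires a separate input: the paper invokes \cite{KRW16}, Corollary 11 (resting on Coleman--Wan slope estimates) to reduce to the case $a_2(f)\equiv 0\pmod{128\Zc[2]}$. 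Also note a small misconception: the relevant Hecke algebra $\set{T}(2,\Z[2])$ is local (there is a unique mod $2$ eigensystem in level $1$), so there is no proliferation of maximal ideals to control when passing from $\Z/8$ to $\Z/128$.
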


The method is not expected to work in its current form to deal with the question of finiteness mod $2^m$ for $m \geq 8$, or with other primes. A discussion of the limitations is given at the end of \cref{section:algorithm}.

We remark that Hatada (\cite{Ha81}, based on a suggestion of Serre, c.f.\ Remark 2 in \cite{Ha79}) asked the following question. 
\begin{question}\label{hatadaserrequestion}Is it true that $a_\ell(f) \equiv 1 + \ell \pmod{2^a\Zc[2]}$ for all eigenforms $f$ of level $1$, primes $\ell \equiv \pm 1 \pmod{2^{a-1}}$, and $a \leq 13$?
\end{question}
Hatada showed that the answer to \cref{hatadaserrequestion} is affirmative for all $a \leq 5$. \cref{finiteness} answers \cref{hatadaserrequestion} affirmatively for all $a \leq 7$. 

We will also reduce the level $1$ Coleman-Stein conjecture (\cref{colemansteinconjecture}) to a finiteness conjecture for a single coefficient.
\newtheorem*{a2conjecture}{\cref{a2conj}}
\begin{a2conjecture}Let $f$ be an eigenform of level $1$ and weight $w + 2$. Then 
\[ a_2(f) \equiv \begin{cases} 3\mbox{ or } 6\pmod{9\Zc[3]}\indent \mbox{if } w \equiv 0 \pmod{6} \\ 3 \mbox{ or } 6\pmod{9\Zc[3]}\indent \mbox{if } w \equiv 4 \pmod{6}\\ 0 \pmod{9\Zc[3]}\indent\indent\indent \mbox{if } w \equiv 2 \pmod{6}.\end{cases} \]
\end{a2conjecture}
\newtheorem*{mainthmII}{Theorem \ref{finitenessa2}}
\begin{mainthmII} \cref{a2conj} implies that there are only finitely many congruence classes $\pmod{9\Zc[3]}$ of eigenforms of level $1$.  
\end{mainthmII}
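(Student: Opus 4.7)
The plan is to deduce finiteness of the mod $9\Zc[3]$ eigenvalue systems of level $1$ eigenforms from the finite list of $dc$-weak eigenforms in $D(1, \Zmod{9}{})$ computed in \cref{section:wefmod9}, using the weight-twist operator $t_\Lambda$ from \cref{dcweakisweak} to handle the potentially ramified component of the coefficient ring.

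By \cref{hatada3}, every classical level $1$ eigenform $f$ reduces to the Eisenstein system mod $3$, so it defines a character $\chi_f$ of the completion $\Tm$ of the Hecke algebra at the Eisenstein maximal ideal $\idl{m}$ of residue characteristic $3$. A priori $\chi_f$ takes values in $\order{K_f}/9\order{K_f}$, which can be strictly larger than $\Zmod{9}{}$. The key structural input that I would extract from \cref{section:wefmod9} and the underlying results of \cite{BK15} is that $\Tm/9$ is generated as a $\Zmod{9}{}$-algebra by $T_2$ together with $t_\Lambda$. The match between the five eigensystems in Coleman-Stein's \cref{colemansteinconjecture} and the five allowable pairs $(a_2 \bmod 9, w \bmod 6)$ dictated by \cref{a2conj} is heuristic evidence for this, and the explicit $dc$-weak eigenform list should supply the direct verification.

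Granting this structural claim, the proof concludes quickly. For a classical level $1$ eigenform $f$ of weight $k$, the operator $t_\Lambda$ acts by $\Lambda^k \in \Z[3]$, so $\chi_f(t_\Lambda)$ reduces into $\Zmod{9}{}$. By \cref{a2conj}, $\chi_f(T_2) = a_2(f)$ also lies in $\Zmod{9}{}$ modulo $9\Zc[3]$. Since $T_2$ and $t_\Lambda$ generate $\Tm/9$ as a $\Zmod{9}{}$-algebra, the image of $\chi_f$ is contained in $\Zmod{9}{}$, so $\chi_f$ agrees with one of the finitely many $\Zmod{9}{}$-valued characters of $\Tm/9$ enumerated in \cref{section:wefmod9}. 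Finiteness of the mod $9\Zc[3]$ eigenvalue systems follows.

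The hard step is the structural statement that $(T_2, t_\Lambda)$ generates $\Tm/9$ over $\Zmod{9}{}$. An analogous mod $2$ statement would follow cleanly from Nicolas-Serre theory, but at $p = 3$ only the weaker machinery of \cite{BK15} is available, so the generation statement has to be extracted by inspecting the explicit list of $dc$-weak eigenforms in \cref{section:wefmod9} and checking both that the pair $(T_2, t_\Lambda)$ already separates all of them and that no hidden relation forces an additional generator at depth $9$.
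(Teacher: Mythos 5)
Your proposed generation statement is false, and this is a fatal gap. You claim that $T_2$ together with $t_\Lambda$ generates $\Tm/9$ over $\Zmod{9}{}$, and you suggest checking this by inspecting the explicit list of $dc$-weak eigenforms in \cref{section:wefmod9}. If you actually perform that inspection, \cref{allwefmod9} tells you that there are exactly $81 = 3^4$ $dc$-weak eigenforms with coefficients in $\Zmod{9}{}$ and that they are separated by the \emph{four}-tuple $(\lambda-1, a_2, a_3, 1+a_7)$; only $9 = 3^2$ of these tuples are distinguished by the pair $(\lambda-1, a_2)$, so for each value of $(\lambda, a_2)$ there are $9$ distinct characters of the Hecke algebra agreeing on $T_2$ and $t_\Lambda$. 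Correspondingly, \cref{heckealggen}(ii) shows that $\set{T}(3,\Z[3])$ is local with $\dim_{\F[3]}\idl{m}/\idl{m}^2 = 5$, with basis $\{3, U, T_2, 1+T_7, t_\Lambda - 1\}$; two ring generators besides $3$ cannot possibly suffice. The apparent match with the five Coleman--Stein eigensystems is a statement about which $dc$-weak eigenforms happen to be strong, not about the structure of the Hecke algebra, and cannot be used to collapse the number of generators.

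The paper's proof therefore has to, and does, supply the two missing ingredients that your argument omits: it uses Congruence (6) of \cite{Ha79} to get $a_3(f) = \varphi(U) \equiv 0 \pmod{9\Zc[3]}$ (your appeal to \cref{hatada3} only gives the mod $3$ version), and it proves a genuinely new congruence $a_7(f) = \varphi(T_7) \in \Zmod{9}{}$ in \cref{actiont7}, established by the modular-symbols computation of \cref{section:algorithm}. The congruence for $a_7$ is the real content of the reduction; it is not a formal consequence of anything and requires its own argument. With all four of $\varphi(U), \varphi(T_2), \varphi(T_7), \varphi(t_\Lambda)$ in $\Zmod{9}{}$ and the generation statement of \cref{heckealggen}(ii) (which uses $1+T_7$ and $U$, not just $T_2$), the image of $\varphi$ lands in $\Zmod{9}{}$ and one can invoke the finite list from \cref{allwefmod9}. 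Your overall strategy (Hecke-algebra generators and a finite list of $\Zmod{9}{}$-valued characters) is the right one and coincides with the paper's, but dropping $U$ and $T_7$ from the generating set is not a shortcut --- it is an error, and without \cref{actiont7} the reduction does not go through.
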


To prove \cref{finiteness} and \cref{finitenessa2}, we will identify explicit generators of the relevant Hecke algebras. In \cref{section:heckegen}, we exploit the fact that there is a one-to-one correspondence between $dc$-weak eigenforms with coefficients in $\Zmod{p}{2}$ and $\Z[p]$-algebra homomorphisms $\mor{\set{T}(p,N)}{\Zmod{p}{2}}$ (the $\Zmod{p}{2}$-valued points of the Hecke algebra) to identify these generators. We find that in order to prove \cref{finiteness}, it is enough to determine the congruence classes of the eigenvalues of $T_3$ and $T_5$ modulo $128\Zc[2]$. This will show that any eigenform of level $1$ is congruent away from $2$ to an eigenform of level $1$ and weight $k \leq 46$. The precise congruences in the statement of \cref{finiteness} are then shown for these eigenforms of small weight. Similarly, in order to prove \cref{finitenessa2}, it is enough to determine the congruence classes of the eigenvalues of $T_7$ modulo $9\Zc[3]$. We prove \cref{finiteness} and \cref{finitenessa2} in \cref{section:finiteness2} and \cref{section:finiteness3} assuming the congruences for the eigenvalues $a_2$ and $a_3$ (modulo $128\Zc[2]$) and $a_7$ (modulo $9\Zc[3]$). 

To prove these congruences, we use Merel's formulation of the theory of modular symbols (\cite{Me94}). In level $1$, this theory has a simple description, and the action of the relevant Hecke operators $T_3$, $T_5$, and $T_7$ is relatively easy to write down. Using Sage, we reduce the proof of these divisibility statements to verifying a finite set of polynomial identities in $\left(\Zmod{128}{}\right)[X,Y]$ and $\left(\Zmod{9}{}\right)[X,Y]$. In \cref{section:algorithm}, we describe the algorithm used to discover and prove these identities. 

The computational part of the proof shares the spirit of Hatada's and Calegari-Emerton's arguments. The main difference is that while they prove congruences working with all Hecke operators $T_\ell$ for $\ell$ varying in a congruence class, we work with specific Hecke operators $T_\ell$ for small primes $\ell$ and use extra knowledge about the Hecke algebra to deduce congruences for eigenforms. 

%%%%%%%%%%%%%%%%%%%%%%%%%%%%%%%%%%%%%%%%%%%

\section{Modular forms and divided congruences}\label{section:modformdc}
Let $N \geq 1$ be an integer and write $N = p^r N_0$ where $r \geq 0$ and $p \nmid N_0$. Let $k \geq 0$ be an integer. We will denote by $S_k(N, \Z)$ the $\Z$-submodule of $\Z[]\dbr{q}$ spanned by the $q$-expansions of cuspidal modular forms of weight $k$ and level $N$ (i.e.\ on $\Gamma_1(N)$) with $q$-expansion coefficients in $\Z$. 

Let $K$ be a finite extension of $\Q[p]$, $\order{K}$ its ring of integers, $\pi_K$ a uniformiser of its maximal ideal, and $\F[K] = \order{K}/\pi_K\order{K}$ its residue field. We will denote by $S_k(N, \order{K})$ the $\order{K}$-submodule of $\order{K}\dbr{q}$ spanned by the image of $S_k(N, \Z)$ via the canonical map. %For simplicity, we will write $S_k(N)= S_k(N, \Z)$, $S_k(\order{K}) = S_k(1, \order{K})$, and $S_k = S_k(1, \Z)$. 
Let
\[ S_{\leq k}(N, \order{K}) := \sum_{i=0}^{k} S_{i}(N, \order{K}) = \bigoplus_{i=0}^{k}S_{i}(N, \order{K}),   \]
\[ S(N, \order{K}) := \sum_{i\geq 0} S_{i}(N, \order{K}) = \bigoplus_{i\geq 0}S_{i}(N, \order{K}),   \]
\[ D_{k}(N, \order{K}) :=  \{ f\in \order{K}\dbr{q}: \pi_K^t f \in S_{\leq k}(N, \order{K}) \mbox{ for some } t \geq 0  \}, \]
\[ D(N, \order{K}) := \bigcup_{k\geq 0} D_{k}(N, \order{K}).  \]
For a $p$-adically complete and separated $\order{K}$-algebra $R$, we let $S_k(N,R)$ (respectively, $S_{\leq k}(N,R)$, $S(N,R)$, $D_k(N,R)$, $D(N,R)$) denote the $R$-submodule of $R\dbr{q}$ spanned by the image of $S_k(N, \order{K})$ (respectively, of $S_{\leq k}(N, \order{K})$, $S(N,\order{K})$, $D_k(N, \order{K})$,  $D(N, \order{K})$) via the canonical map $\order{K}\dbr{q} \rightarrow R\dbr{q}$. 
We call $D_{k}(N, R)$ the module of divided congruences with coefficients in $R$ of weight at most $k$ and level $N$. We call $D(N, R)$ the module of divided congruences with coefficients in $R$ and level $N$. For simplicity, we will drop $N$ from the notation if $N = 1$. 

The modules $D_k(N,R)$ and $D(N, R)$ satisfy a nice base change property.
\begin{proposition}[\cite{Ru17}, Proposition 2.2]\label{dcbasechange} We have 
\[ D_k(N,R) = D_k(N, \order{K}) \otimes_{\order{K}} R    \]
and 
\[ D(N,R) = D(N, \order{K}) \otimes_{\order{K}} R.    \]
\end{proposition}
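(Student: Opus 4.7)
The plan is to deduce both isomorphisms from two structural facts about $D_k(N,\order{K})$: that it is a finitely generated free $\order{K}$-module of some rank $d$, and that it is $\pi_K$-saturated inside $\order{K}\dbr{q}$. Once these are in hand, a coordinate-matrix argument using the $q$-expansion principle modulo $\pi_K$ will give the finite-weight statement, and the statement for $D(N,R)$ will follow by passing to the direct limit over $k$, since tensor product commutes with direct limits.

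The first fact follows from the identification $D_k(N,\order{K}) = S_{\leq k}(N,K) \cap \order{K}\dbr{q}$, which is immediate from the definition and the fact that $\order{K}$ is a DVR: this realizes $D_k(N,\order{K})$ as a torsion-free $\order{K}$-submodule of the finite-dimensional $K$-vector space $S_{\leq k}(N,K)$, contained in an $\order{K}$-lattice cut out by finitely many $q$-coefficient integrality conditions (via the $q$-expansion principle over $K$), hence finitely generated and free of rank $d := \dim_K S_{\leq k}(N,K)$. The second fact is also immediate from the definition: if $\pi_K f \in D_k(N,\order{K})$ with $f \in \order{K}\dbr{q}$, then $\pi_K^{t+1}f \in S_{\leq k}(N,\order{K})$ for some $t$, so $f \in D_k(N,\order{K})$. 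This saturation yields an injection $D_k(N,\order{K})/\pi_K D_k(N,\order{K}) \hookrightarrow \F[K]\dbr{q}$, and consequently, for any $\order{K}$-basis $\{f_1,\ldots,f_d\}$ of $D_k(N,\order{K})$, the reductions $\bar f_i$ remain $\F[K]$-linearly independent in $\F[K]\dbr{q}$. The $q$-expansion principle over $\F[K]$ then furnishes indices $m_1,\ldots,m_d$ such that $A := (a_{m_j}(f_i))_{i,j}$ has nonzero determinant in $\F[K]$, that is, $A \in \GL_d(\order{K})$.

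To conclude, the natural map $\psi: D_k(N,\order{K}) \otimes_{\order{K}} R \to D_k(N,R) \subset R\dbr{q}$ is surjective by the very definition of $D_k(N,R)$. Since $\{f_i \otimes 1\}$ is an $R$-basis of the source, any element of $\ker\psi$ produces an $R$-linear relation $\sum_i r_i \iota(f_i) = 0$ in $R\dbr{q}$; reading off the coordinates $m_1,\ldots,m_d$ yields $A(r_1,\ldots,r_d)^T = 0$ in $R^d$, and because the image of $A$ in $R$ still lies in $\GL_d(R)$, we force $r_i = 0$ for all $i$. The equality $D(N,R) = D(N,\order{K}) \otimes_{\order{K}} R$ then follows by taking the direct limit over $k$. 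The main obstacle, and where the specific structure of divided congruences enters, is the coordinate-matrix step: one has to leverage saturation of $D_k(N,\order{K})$ in $\order{K}\dbr{q}$ to upgrade $\F[K]$-linear independence to the existence of a $d \times d$ minor that is a unit in $\order{K}$, which in turn is what makes the base change robust against the pathologies that normally afflict tensor products of the non-finitely-generated module $\order{K}\dbr{q}$.
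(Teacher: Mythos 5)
Your proof is correct, and the strategy is the natural one: reduce to the finite-weight case, observe that $D_k(N,\order{K})$ is a finite free $\order{K}$-module that is $\pi_K$-saturated in $\order{K}\dbr{q}$, deduce from saturation that the $q$-expansion map splits after choosing coordinates $m_1,\dots,m_d$ (so that $D_k(N,\order{K})$ is a direct summand of $\order{K}\dbr{q}$ via the invertible minor $A$), and then base change is harmless. The passage to $D(N,R)$ by a filtered colimit over $k$ is also fine, since the transition maps $D_k(N,\order{K})\otimes R \to D_{k'}(N,\order{K})\otimes R$ are injective after the finite-weight isomorphism is established. This is essentially the argument one would expect behind [Ru17, Prop.\ 2.2].

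Two small remarks. First, what you repeatedly call the ``$q$-expansion principle'' is, at the two places you invoke it, really just the injectivity of the $q$-expansion map $S_{\leq k}(N,K)\to K\dbr{q}$ (linear independence of forms of distinct weights plus injectivity in each fixed weight) combined with ordinary finite-dimensional linear algebra: a $d$-dimensional subspace of a power series ring is cut out faithfully by some choice of $d$ coefficient functionals, and $\F[K]$-linear independence of $d$ power series is witnessed by a nonzero $d\times d$ minor. Using the phrase ``$q$-expansion principle'' here is a bit loose, since that term usually refers to integrality or descent statements, but the mathematics is right. Second, the identification $D_k(N,\order{K}) = S_{\leq k}(N,K)\cap \order{K}\dbr{q}$ deserves one extra clause: to pass from $f\in S_{\leq k}(N,K)\cap\order{K}\dbr{q}$ to $\pi_K^t f\in S_{\leq k}(N,\order{K})$ you are implicitly using that $S_{\leq k}(N,\order{K})$ is a finitely generated $\order{K}$-module, so that a $K$-linear combination of its elements can be cleared of denominators by a single power of $\pi_K$. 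This is true, but worth saying, since the whole argument ultimately hinges on the finiteness of these modules.
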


Divided congruences have a natural geometric interpretation in terms of trivialised elliptic curves. We recall this theory presented in \cite{Ka75}, \cite{Ka75a}, and \cite{Go88}. A trivialised elliptic curve over $R$ with a level $N = p^rN_0$ structure is a triple $(E/R, \iota_N, \varphi)$ consisting of 
\begin{enumerate}[(i)]
\item an elliptic curve $E$ over $R$, 
\item an $R$-isomorphism $\varphi: \isomor{\widehat{E}}{\widehat{\set{G}}_m}$ between the formal group of $E$ and the formal multiplicative group, and
\item an inclusion $\iota_N: \injmor{\mu_{N}}{E[N]}$ of finite flat group schemes over $R$ such that the induced composite map 
\[\injmor{\mu_{p^r}}{\mor[\varphi]{\widehat{E}}{\widehat{\set{G}}_m}}   \]
is the canonical inclusion. 
\end{enumerate}
An elliptic curve admitting a trivialisation is necessarily fibre-by-fibre ordinary, and the converse is true up to base-change. A morphism $\mor{(E, \iota_{N}, \varphi)}{(E', \iota_{N}',\varphi')}$ of trivialised elliptic curves over $R$ is a morphism $\alpha: \mor{E}{E'}$ of elliptic curves over $R$, compatible with the level structures $\iota_N$ and $\iota_N'$ and with the trivialisations $\varphi$ and $\varphi'$. The functor
\[ \sh{F}^{triv}: R \mapsto \{\mbox{isomorphism classes of trivialised elliptic curves}\]\[\mbox{ with level $N$ structure } (E/R, \iota_N, \varphi)\}  \]
on the category of $p$-adically complete and separated $\order{K}$-algebras is represented by a $p$-adically complete and separated $\order{K}$-algebra $\set{V}(N, \order{K})$. An element of $\set{V}(N,\order{K})$ is called a $p$-adic modular form of level $N$ and coefficients in $\order{K}$ and can be understood as a rule $f$ which assigns to any trivialised elliptic curve $(E, \iota_N, \varphi)$ over a $p$-adically complete and separated $\order{K}$-algebra $R$ a value $f(E, \iota_N, \varphi)\in R$ depending only on the $R$-isomorphism class of $(E, \iota_N, \varphi)$ and whose formation is compatible with base change. The Tate curve, defined over the $p$-adic completion $\widehat{\Z[p](\!( q )\!)}$ of $\Z[p](\!( q )\!)$, admits a canonical trivialisation $\varphi_{can}$ and a canonical level $N$ structure $\iota_{N, can}$, and the $q$-expansion of $f \in \set{V}(N,\order{K})$ is given by $f(q) = f(\Tate(q), \iota_{N, can}, \varphi_{can})$. We let $\set{V}^{par}(N, \order{K}) \subset \set{V}(N, \order{K})$ denote the set of elements $f \in \set{V}(N, \order{K})$ such that \[f(\Tate(q), \iota_N, \varphi_{can}) \in q\order{K}\dbr{q}\] for all level $N$ structures on $\Tate(q)$.  %When $R = \Z[p]$, we write $\set{V}(N, \Z[p]) = \set{V}(N)$ and $\set{V}^{par}(N,\Z[p]) = \set{V}^{par}(N)$, and when $N = 1$ we write $\set{V}(1, R) = \set{V}(R)$ and $\set{V}^{par}(1, R) = \set{V}^{par}(R)$. 

The following theorem is key to the whole theory.
\begin{theorem}\label{dckey} The $q$-expansion map 
\[ \mor{\set{V}^{par}(N, \order{K})}{\order{K}\dbr{q}}  \]
is injective and its image contains the module $D(N, \order{K})$ of divided congruences as a dense subset. 
\end{theorem}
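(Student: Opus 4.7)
The plan is to separate the two assertions: injectivity of the $q$-expansion map on $\set{V}^{par}(N, \order{K})$ (Katz's $q$-expansion principle), and $\pi_K$-adic density of the image of $D(N, \order{K})$. Both rely on the moduli-theoretic definition of $\set{V}(N, \order{K})$ and, for density, on the Eisenstein series of weight $p-1$ (or a suitable higher-weight analogue when $p \in \{2, 3\}$) reducing to the Hasse invariant.

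For injectivity, since $\set{V}^{par}(N, \order{K})$ is $\pi_K$-adically complete and $\pi_K$-torsion free (the latter from the flatness of the moduli problem over $\order{K}$), it suffices to prove injectivity modulo $\pi_K$. That reduction is geometric: an element of $\set{V}^{par}(N, \F[K])$ is a regular function on the formal moduli scheme of trivialized elliptic curves over $\F[K]$-algebras, and this scheme is irreducible (essentially the Igusa tower), so vanishing along the cusp --- i.e.\ of the $q$-expansion at the Tate curve --- forces vanishing identically. I would then promote this to the sharper statement that if $f \in \set{V}^{par}(N, \order{K})$ has $q$-expansion in $\pi_K^n\order{K}\dbr{q}$, then $f \in \pi_K^n\set{V}^{par}(N, \order{K})$, by induction on $n$.

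To show $D(N, \order{K})$ sits inside the image, I would start with the observation that any classical cusp form $f \in S_k(N, \order{K})$ extends to a rule $(E, \iota_N, \varphi) \mapsto f(E, \iota_N, \omega_\varphi)$, where $\omega_\varphi$ is the invariant differential on $E$ dual to the trivialization $\varphi: \isomor{\widehat{E}}{\Gmc}$. On the Tate curve this recovers the classical $q$-expansion, so one obtains an embedding $S(N, \order{K}) \hookrightarrow \set{V}^{par}(N, \order{K})$ preserving $q$-expansions. The sharper injectivity above then legitimates the $\pi_K^{-t}$-multiples appearing in the definition of $D_k(N, \order{K})$: a $g \in S_{\leq k}(N, \order{K})$ whose $q$-expansion is divisible by $\pi_K^t$ is itself divisible by $\pi_K^t$ in $\set{V}^{par}(N, \order{K})$.

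The main obstacle is density. Given $h \in \set{V}^{par}(N, \order{K})$ and $n \geq 1$, I would construct an element of $D(N, \order{K})$ congruent to $h$ modulo $\pi_K^n$ by successive approximation. The crucial lifting input is the classical result of Serre and Katz that every mod-$\pi_K$ $p$-adic modular form of level $N$ is the reduction of a classical modular form of sufficiently large weight. The underlying mechanism is the Eisenstein series $E_{p-1}$ (for $p \geq 5$; one uses appropriate analogues in the residually small cases), whose value on any trivialized elliptic curve reduces to $1$ modulo $p$: multiplying a classical form of weight $k$ by $E_{p-1}^a$ yields a classical form of weight $k + a(p - 1)$ with the same reduction, so weights can be freely matched in characteristic $p$. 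Iterating the lifting modulo successive powers of $\pi_K$ produces a Cauchy sequence in $D(N, \order{K})$ converging to $h$ $\pi_K$-adically.
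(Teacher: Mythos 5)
Your argument is attempting a first-principles reconstruction of Katz's theorem, which is a genuinely different route from the paper: the paper does not prove the statement but simply cites \cite{Ka75} and \cite{Ka75a}, flagging that the $N < 3$ cases require passing from moduli schemes to moduli stacks, and pointing to \cite{Ge04} for the $p = 2$, $N = 1$ case worked out explicitly. That difference matters here, since the whole paper operates at level $1$.

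Beyond that, there is a real gap in your density step — and density is the entire content of the theorem. You invoke as "the crucial lifting input" that every mod-$\pi_K$ $p$-adic modular form of level $N$ is the reduction of a classical modular form of sufficiently large weight. That is false as stated. By \cref{katzdcmodp} (Katz's theorem quoted in the paper), $S(N, \F[K])$ sits inside $D(N, \F[K])$ as exactly the $(1 + p\Z[p])$-invariant elements; elements of $D(N, \F[K])$ (let alone of the full Igusa tower $\set{V}^{par}(N, \F[K])$) that are not $(1+p\Z[p])$-invariant are therefore not reductions of classical modular forms of any weight. What the successive-approximation scheme actually needs is the identification of $\set{V}^{par}(N, \order{K})/\pi_K\set{V}^{par}(N, \order{K})$, inside $\F[K]\dbr{q}$, with the image of $D(N, \order{K})$ there — and that identification is Katz's main structure theorem about the Igusa tower, whose proof uses its full $\units{\Z[p]}$-Galois structure and Artin–Schreier descent, not merely the $E_{p-1} \equiv 1 \pmod{p}$ weight-shifting mechanism. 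So the step you are treating as a known black box is precisely the theorem being proved, and the "classical Serre–Katz result" you are appealing to is a weaker statement about the graded ring $\bigcup_k M_k(N, \F[p])$ that does not reach the Igusa tower. Your treatment of injectivity (reduce to mod $\pi_K$ via flatness and separatedness, then use irreducibility, then promote to the sharper divisibility statement) is sound, and your observation that this sharper injectivity is what legitimates the $\pi_K^{-t}$-multiples in the definition of $D_k$ is a nice point, but the density half needs to be rebuilt around Katz's actual structure theorem or else simply cited as the paper does.
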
 
\begin{proof} This is shown in \cite{Ka75} when $N \geq 3$. The condition on the level is present because in that paper, Katz chooses to work with moduli schemes. However, in \cite{Ka75a}, Katz points out how to go around this and work on moduli stacks in order to obtain the same result without any restriction on the level. The injectivity of the $q$-expansion map is a consequence of the irreducibility of the stack of trivialised elliptic curves. See for example \cite{Ge04} where the case $p = 2$, $N = 1$ is worked out explicitly. 
\end{proof}
In light of \cref{dckey}, we may view $\set{V}^{par}(N, \order{K})$ as the $p$-adic completion of the module $D(N, \order{K})$ of divided congruences. 

Each element $(x,y) \in \units{\Z[p]}\times\units{\left(\Zmod{N}{}\right)} = \Aut(\widehat{\set{G}}_m)\times\units{\left(\Zmod{N}{}\right)}$ acts on elements $f \in \set{V}^{par}(N, \order{K})$ by
\[ \langle x,y\rangle f(E, \iota_N, \varphi) = f(E, y\iota_N, \inv{x}\varphi).    \] We will denote the operator $\langle x, 1\rangle$ by $[x]$. Because of \cref{dckey}, we get an action of $\units{\Z[p]}$ on $D(N, \order{K})$ given by
\[ f = \pi_K^{-t} \sum_i f_i \mapsto [x]f =  \pi_K^{-t} \sum_i x^{k_i}f_i.   \]
Katz showed that
\begin{theorem}\label{katzdcmodp} The subspace $S(N, \F[K])$ of $D(N, \F[K])$ is precisely the set of elements invariant under the action of $1+p\Z[p]$.
\end{theorem}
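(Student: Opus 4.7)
The plan is to prove the two inclusions defining the equality, writing $T := 1+p\Z[p]$. The inclusion $S(N, \F[K]) \subseteq D(N, \F[K])^T$ is a direct computation: for $f \in S_k(N, \order{K})$ the action reads $[x]f = x^k f$, and if $x \in T$ then $x \equiv 1 \pmod{p}$, so $x^k \equiv 1 \pmod{p}$ and hence $[x]f \equiv f \pmod{\pi_K}$ in $\order{K}\dbr{q}$. By $\order{K}$-linearity of $[x]$ and the direct sum decomposition $S(N, \order{K}) = \bigoplus_k S_k(N, \order{K})$, every element of $S(N, \order{K})$ is $T$-invariant modulo $\pi_K$, giving the containment.

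For the reverse direction, I would decompose according to the character theory of $\units{\Z[p]}$. When $p$ is odd, $\units{\Z[p]} = \mu_{p-1} \times T$, and the residual $\mu_{p-1}$-action on $D(N, \F[K])^T$, acting through a group of order prime to the characteristic of $\F[K]$, produces an eigenspace decomposition $D(N, \F[K])^T = \bigoplus_{j=0}^{p-2}V_j$, where $V_j$ is characterised by $[x]\bar h = \omega(x)^j \bar h$ for all $x \in \units{\Z[p]}$ with $\omega$ the Teichm\"uller character (the case $p = 2$ degenerates since $1 + 2\Z[2] = \units{\Z[2]}$). A classical cuspform of weight $k$ reduces into $V_{k \bmod (p-1)}$, since $[x]\bar f \equiv \omega(x)^k \bar f \pmod{\pi_K}$. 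Letting $S^{(j)}(N, \F[K])$ denote the submodule of mod-$\pi_K$ reductions of cuspidal forms of weight congruent to $j$ modulo $p-1$, the easy inclusion sharpens to $S^{(j)}(N, \F[K]) \subseteq V_j$ for each $j$, and the problem reduces to the opposite containment.

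The substantive obstacle is this last step, which is the heart of Katz's theory (see \cite{Ka75} and \cite{Ka75a}). Using the geometric description of $\set{V}^{par}(N, \F[K])$ as rules on trivialised ordinary elliptic curves, the $V_j$-condition becomes $\bar h(E, \iota_N, \lambda^{-1}\varphi) = \lambda^j \bar h(E, \iota_N, \varphi)$ for all $\lambda \in \units{\F[K]}$, which is precisely the transformation law of a classical cuspidal modular form of weight congruent to $j \bmod (p-1)$, evaluated via the canonical differential $\omega_{\rm can}$ attached to the trivialisation. The crucial ingredient making this weight well-defined only modulo $p-1$ is the congruence $E_{p-1} \equiv 1 \pmod{p}$ for the Eisenstein series $E_{p-1}$: multiplication by $E_{p-1}$ shifts the weight of a cuspform by $p-1$ without altering its mod-$\pi_K$ $q$-expansion, so any element of $V_j$ can be realised as the mod-$\pi_K$ reduction of a classical cuspform of sufficiently large weight congruent to $j \pmod{p-1}$. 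Combined with the $q$-expansion injectivity of \cref{dckey} and the base change of \cref{dcbasechange}, this yields $V_j \subseteq S^{(j)}(N, \F[K])$, and summing over $j$ completes the proof.
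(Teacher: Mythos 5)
The paper does not actually prove \cref{katzdcmodp}: it is stated as a theorem of Katz, with the level-$1$, $p=2,3$ cases covered via \cite{Ka75a} and \cite{Ge04}. Your proposal is therefore a reconstruction of Katz's argument, and its overall shape --- the direct computation for the easy inclusion, the $\mu_{p-1}$-eigenspace decomposition of the $T$-invariants into pieces $V_j$, and the translation of the $V_j$-condition into a weight-$j$ transformation law via the geometric description of $\set{V}^{par}$ --- is in the right spirit.

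The genuine gap is in the final step, which you flag as the "substantive obstacle" but then dispatch too quickly. Showing $V_j \subseteq S^{(j)}(N,\F[K])$ requires proving that a rule defined a priori only on trivialised (hence ordinary) elliptic curves --- equivalently, a section of $\omega^{\otimes j}$ over the ordinary locus of the modular curve mod $p$ --- extends across the supersingular locus to a classical form. The sentence "multiplication by $E_{p-1}$ shifts the weight \ldots\ so any element of $V_j$ can be realised as the mod-$\pi_K$ reduction of a classical cuspform" has the logic backwards: multiplying by powers of $E_{p-1}$ (whose reduction is the Hasse invariant cutting out the ordinary locus) can only clear a pole of \emph{bounded} order along the supersingular divisor, and nothing in your argument produces that bound. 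This is precisely where one must use the structure of $D(N,\order{K})$ --- that $\bar h$ is the reduction of a divided congruence $\pi_K^{-t}\sum_i f_i$ supported in finitely many weights, which is what controls the order of vanishing. Without this the argument does not close. Two smaller imprecisions: the rescaling of the residual trivialisation is by $\Aut(\widehat{\set{G}}_m)=\units{\Z[p]}$, which reduces to $\units{\F[p]}$ rather than $\units{\F[K]}$, so the transformation law one reads off is only for $\lambda\in\units{\F[p]}$ and an extension step is needed; and the case $p=2$ is not a degeneracy one may set aside, since $1+2\Z[2]=\units{\Z[2]}$ makes this exactly the delicate case that the paper's remark and \cite{Ge04} address separately.
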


\begin{comment}\begin{remark}\label{katzlevel1} The results of \cite{Ka75} assume that either $p \geq 5$ or that $N \geq 3$ when $p = 2$ or $3$. This is because, in that paper, Katz chooses to work with rigidified moduli problems, i.e.\ with schemes. However, as Katz pointed out in \cite{Ka75a} (\S X and \S XI), one could instead work with moduli stacks when $p = 2$ or $3$ and $N = 1$ to obtain the same results. See also \cite{Ge04} (Proposition 2) where the case $N = 1$, $p = 2$ is worked out explicitly. 
\end{remark}
\end{comment}
\begin{remark} When $N = 1$, there is an isomorphism $(E/R, \varphi) \cong (E/R, -\varphi)$ induced by the involution ``multiplication by -1" on $E$. Thus the operator $[-1]$ acts trivially on $p$-adic modular forms of level $1$. This can be understood at the level of divided congruences, as they all come at level $1$ from modular forms of even weight. In particular, when $p = 2$, we have \[ \units{\Z[2]} = 1 + 2\Z[2] = \left(1 + 4\Z[2] \right)\times \langle -1\rangle,  \]
so $\units{\Z[2]}$ acts through its quotient $1+4\Z[2]$. 
\end{remark}

%%%%%%%%%%%%%%%%%%%%%%%%%%%%%%%%%%%%%%%%%%%%%%

\section{Strong, weak, and $dc$-weak eigenforms}\label{section:swdc}

The action of the Hecke operators $T_n$, $p\nmid n$, can be extended from $S(N,\order{K})$ to $D(N, \order{K})$ (see \cite{BK15}). If $p | N$, we can also extend the action of the $U$ operator from $S(N, \order{K})$ to $D(N, \order{K})$. This action is given by
\[a_n(Uf) = \begin{cases}0 \indent \mbox{if } p \nmid n, \\ a_{n/p}(f) \indent \mbox{if } p | n. \end{cases}   \]

For each $m \geq 1$, let $R_m := \order{K}/\pi_K^{m}\order{K}$ (so $R_1 = \F[K]$). By \cref{dcbasechange}, the Hecke operators $T_n$, $p\nmid n$, and $U$ if $p | N$, induce operators on $D(N, R_m)$ which are compatible with their action on $q$-expansions. 

\begin{lemma} The operator $U$ induces an operator on $D(N, R_m)$ which is compatible with the action of $U$ on $q$-expansions. 
\end{lemma}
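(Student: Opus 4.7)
The plan is to realize $U$ on the ambient space $\set{V}^{par}(N, \order{K})$ of parabolic $p$-adic modular forms via Katz's theory, and then descend modulo $\pi_K^m$ using the $q$-expansion identification from \cref{dckey}. The reason this is not entirely trivial is that when $p \nmid N$ the operator $U$ does \emph{not} preserve $S(N, \order{K})$, so it cannot be defined directly on $D(N, \order{K})$ fibrewise by weight; one genuinely needs to leave the classical world and work inside $\set{V}^{par}$.

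First, I would recall from Katz's theory (\cite{Ka75}) that the Atkin operator $U$ is defined geometrically on $\set{V}^{par}(N, \order{K})$ for every level $N$, by averaging over order-$p$ subgroup schemes of a trivialised elliptic curve distinct from the canonical subgroup. On $q$-expansions it acts by the formula $a_n(Uf) = a_{np}(f)$. In particular $U$ preserves $\set{V}^{par}(N, \order{K})$ and is $\order{K}$-linear, hence commutes with multiplication by $\pi_K^m$.

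Next, I would extract the two pieces of \cref{dckey}. Injectivity of the $q$-expansion map $\set{V}^{par}(N, \order{K}) \hookrightarrow \order{K}\dbr{q}$ implies that the kernel of the composite $\set{V}^{par}(N, \order{K}) \to R_m\dbr{q}$ is exactly $\pi_K^m \set{V}^{par}(N, \order{K})$, so we obtain an injection
\[ \set{V}^{par}(N, \order{K})/\pi_K^m \set{V}^{par}(N, \order{K}) \hookrightarrow R_m\dbr{q}. \]
Density of $D(N, \order{K})$ in the $\pi_K$-adically complete module $\set{V}^{par}(N, \order{K})$ then shows that the image of this injection is exactly the image of $D(N, \order{K})$ in $R_m\dbr{q}$, which is $D(N, R_m)$ by definition (and compatibly with \cref{dcbasechange}).

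Finally, since $U$ preserves $\set{V}^{par}(N, \order{K})$ and commutes with multiplication by $\pi_K^m$, it induces an $R_m$-linear endomorphism of the quotient $\set{V}^{par}(N, \order{K})/\pi_K^m \set{V}^{par}(N, \order{K})$, which under the identification above is precisely $D(N, R_m) \subset R_m\dbr{q}$. By construction the resulting operator acts on $q$-expansions by $a_n(Uf) = a_{np}(f)$, as required. The main subtlety is step one: confirming that Katz's geometric construction of $U$ is available on $\set{V}^{par}(N, \order{K})$ also for levels $N$ with $p \nmid N$ and in particular for $N=1$. This is handled exactly as in the stack-theoretic discussion invoked for the proof of \cref{dckey}, after which the remaining steps are formal.
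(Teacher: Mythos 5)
Your proof takes the same broad route as the paper's — realize $U$ on the completed space $\set{V}^{par}(N,\order{K})$ and then pass to $R_m$ — but differs in two places worth noting.

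First, for the existence of $U$ on $\set{V}^{par}(N,\order{K})$ when $p\nmid N$, you appeal directly to Katz's geometric construction (averaging over order-$p$ subgroups not equal to the canonical one). The paper instead invokes the identification, due to Gouv\^ea (Proposition I.3.9 of \cite{Go88}, also used in \cref{comparehecke}(ii)), of $\set{V}^{par}(N,\order{K})$ with the $p$-adic completion of $D(Np,\order{K})$, where $U$ is already defined purely in terms of $q$-expansions because $p\mid Np$, and then extends $U$ to $\set{V}^{par}(N,\order{K})$ by continuity. Your version buys directness; the paper's buys that $U$-stability of $\set{V}^{par}(N,\order{K})$ is automatic from the $q$-expansion description and one never has to open up the geometry. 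Both are legitimate.

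Second --- and this is the one place your write-up overreaches --- you assert that injectivity of the $q$-expansion map $\set{V}^{par}(N,\order{K})\hookrightarrow\order{K}\dbr{q}$ \emph{by itself} implies that the kernel of $\set{V}^{par}(N,\order{K})\to R_m\dbr{q}$ is exactly $\pi_K^m\set{V}^{par}(N,\order{K})$. That is a strictly stronger claim: you also need to know that a $p$-adic modular form whose $q$-expansion lies in $\pi_K^m\order{K}\dbr{q}$ is itself divisible by $\pi_K^m$ in $\set{V}^{par}(N,\order{K})$, i.e.\ that the reduced $q$-expansion map $\set{V}^{par}(N,\order{K})/\pi_K^m\set{V}^{par}(N,\order{K})\to R_m\dbr{q}$ is injective. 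This is a form of Katz's $q$-expansion principle; it is true, but it is a separate input, not a formal consequence of injectivity over $\order{K}$. The paper sidesteps the issue by quoting \cref{dcbasechange} together with the nilpotence of $\pi_K$ in $R_m$, giving $D(N,R_m)=D(N,\order{K})\otimes_{\order{K}}R_m=\set{V}^{par}(N,\order{K})\otimes_{\order{K}}R_m$ directly, from which the induced action of $U$ on $D(N,R_m)$ is immediate. If you keep your route, cite the $q$-expansion principle (or simply \cref{dcbasechange}) at that step; with that addition the argument is complete.
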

\begin{proof} This is clear by the above discussion if $p | N$. If $p \nmid N$, we proceed as follows. By \cref{dckey} we can identify $\set{V}^{par}(N, \order{K})$ with the $p$-adic completion of $D(N, \sh{O}_K)$. By \cite{Go88}, Proposition I.3.9, $\set{V}^{par}(N, \order{K})$ is also the $p$-adic completion of $D(Np, \order{K})$. Thus by continuity we can extend the action of $U$ from $D(Np, \order{K})$ to $\set{V}^{par}(N, \order{K})$. Since the image of $\pi_K$ in $R_m$ is nilpotent, we have 
\[D(N, R_m) = D(N, \order{K})\otimes_{\order{K}} R_m = \set{V}^{par}(N, \order{K})\otimes_{\order{K}} R_m \]
by \cref{dcbasechange}. The action of $U$ on $\set{V}^{par}(N, \order{K})$ then induces an action on $D(N, R_m)$. \end{proof}

Let $\set{T}^{pf}(N, R_m)$ denote the partially full Hecke algebra on $D(N, R_m)$, i.e.\ the $R_m$-subalgebra of $\End[R_m]{D(N, R_m)}$ generated by $\{T_n : p \nmid n\}$. Let $\set{T}(N, R_m)$ denote the $R$-subalgebra of $\End[R_m]{D(N, R_m)}$ generated by $\set{T}^{pf}(N, R_m)$ and $U$. We call $\set{T}(N,R_m)$ the full Hecke algebra on $D(N,R_m)$. Just like in the previous section, we drop $N$ from the notation if $N = 1$. %We give both $\set{T}^{pf}(N,R_m)$ and $\set{T}(N, R_m)$ the $\pi_K$-adic topology. 

The algebra $\set{T}(N, R_m)$ satisfies a nice base change property.

\begin{proposition}[\cite{Ru17}, Corollary 3.5]\label{heckebasechange} We have
\[ \set{T}(N, R_m) = \set{T}(N, \order{K})\otimes_{\order{K}} R_m.   \]
\end{proposition}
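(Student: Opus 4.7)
The approach is to construct a natural $R_m$-algebra homomorphism
\[
\Phi : \set{T}(N, \order{K}) \otimes_{\order{K}} R_m \longrightarrow \set{T}(N, R_m)
\]
and show it is an isomorphism. By \cref{dcbasechange}, $D(N, R_m) = D(N, \order{K}) \otimes_{\order{K}} R_m$, so any $T \in \set{T}(N, \order{K}) \subset \End[\order{K}]{D(N, \order{K})}$ induces by base change an $R_m$-linear endomorphism of $D(N, R_m)$. This yields a ring map $\set{T}(N, \order{K}) \to \End[R_m]{D(N, R_m)}$ that kills $\pi_K^m$, hence factors through $\set{T}(N, \order{K}) \otimes_{\order{K}} R_m$. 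Its image contains every Hecke generator $T_n$ (for $p \nmid n$) and $U$, so it equals $\set{T}(N, R_m)$, and $\Phi$ is surjective.

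For injectivity I would first verify that $\set{T}(N, \order{K})$ is $\order{K}$-flat. It embeds in $\End[\order{K}]{D(N, \order{K})}$, which is $\order{K}$-torsion-free because $D(N, \order{K}) \subset \order{K}\dbr{q}$ is. Over the DVR $\order{K}$, torsion-freeness is equivalent to flatness, so
\[
\set{T}(N, \order{K}) \otimes_{\order{K}} R_m \;=\; \set{T}(N, \order{K}) / \pi_K^m \set{T}(N, \order{K}),
\]
and injectivity of $\Phi$ reduces to the claim that every $T \in \set{T}(N, \order{K})$ that acts as zero on $D(N, R_m)$ (equivalently, $T(f) \in \pi_K^m D(N, \order{K})$ for all $f \in D(N, \order{K})$) must lie in $\pi_K^m \set{T}(N, \order{K})$.

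I expect this last step to be the main obstacle, since an arbitrary subalgebra of an endomorphism ring is not automatically saturated: writing $T = \pi_K^m S$ with $S \in \End[\order{K}]{D(N, \order{K})}$ (unique by torsion-freeness of $D(N, \order{K})$) does not a priori place $S$ inside the Hecke subalgebra. The strategy I would pursue is to exploit the duality pairing
\[
\set{T}(N, \order{K}) \times D(N, \order{K}) \longrightarrow \order{K}, \qquad (T, f) \mapsto a_1(Tf),
\]
together with the identities $a_n(Tf) = a_1(T_n Tf)$ for $\gcd(n, Np) = 1$ and $a_p(Tf) = a_1(UTf)$, which ensure that every $q$-expansion coefficient of $Tf$ is recovered from values of this pairing on Hecke translates of $f$. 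The hypothesis that $T$ annihilates $D(N, R_m)$ then translates into an integrality condition modulo $\pi_K^m$ for the linear functional $f \mapsto a_1(Tf)$; combined with the flatness of $\set{T}(N, \order{K})$ and the compatibility of the pairing with base change along $\order{K} \to R_m$, this should force $T \in \pi_K^m \set{T}(N, \order{K})$ and complete the proof.
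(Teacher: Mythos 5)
The paper cites this proposition from \cite{Ru17} without reproducing the argument, so there is no in-paper proof to compare against; I will evaluate your proposal on its own terms. Your reduction is sound as far as it goes: the construction of $\Phi$, surjectivity via the Hecke generators, flatness of $\set{T}(N,\order{K})$ over the DVR $\order{K}$ (hence $\set{T}(N,\order{K}) \otimes_{\order{K}} R_m = \set{T}(N,\order{K})/\pi_K^m\set{T}(N,\order{K})$), and the reformulation of injectivity as the saturation statement that $T(D(N,\order{K})) \subset \pi_K^m D(N,\order{K})$ must force $T \in \pi_K^m\set{T}(N,\order{K})$. You also correctly diagnose that this saturation is the genuine content and is not automatic for an arbitrary subalgebra of an endomorphism ring.

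That, however, is exactly where the argument stops: ``this should force $T \in \pi_K^m\set{T}(N,\order{K})$'' is the whole point and is left unestablished. The identities $a_n(Tf) = a_1(T_nTf)$ give you faithfulness of the pairing on the $\set{T}$-side, i.e.\ injectivity of $T \mapsto \ell_T$; but injectivity does not imply that the $\order{K}$-valued functional $\ell_T/\pi_K^m$ you obtain under your hypothesis is again of the form $\ell_S$ for some $S$ in the Hecke subalgebra. For that you need surjectivity --- perfection of the pairing, not merely its non-degeneracy. The missing ingredient is to exploit how $D_k(N,\order{K})$ is defined. At each finite level $k$, both $\set{T}_k(N,\order{K})$ and $D_k(N,\order{K})$ are finite free $\order{K}$-modules, and the pairing identifies $\Hom[\order{K}]{\set{T}_k(N,\order{K})}{\order{K}}$ with the set of $q$-expansions in the $K$-span of $S_{\leq k}(N,\order{K})$ whose coefficients all lie in $\order{K}$ --- which is precisely $D_k(N,\order{K})$ by its definition as the saturation of $S_{\leq k}(N,\order{K})$. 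Hence $D_k(N,\order{K}) = \Hom[\order{K}]{\set{T}_k(N,\order{K})}{\order{K}}$, and double-dualizing over the DVR gives $\set{T}_k(N,\order{K}) = \Hom[\order{K}]{D_k(N,\order{K})}{\order{K}}$, i.e.\ the finite-level pairing is perfect. This yields $\set{T}_k(N,R_m) = \set{T}_k(N,\order{K}) \otimes_{\order{K}} R_m$ directly, and the full statement follows by passing to the inverse limit over $k$, using surjectivity of the transition maps and torsion-freeness to commute $\pi_K^m$ with $\varprojlim$. So your strategy points in the right direction, but it needs perfection of the pairing --- derived from $D_k$ being a saturation --- rather than just faithfulness, and the finite-level-then-limit mechanics made explicit.
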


Define the pairing
\[ \mor{\set{T}(N, R_m) \times D(N, R_m)}{R_m},   \]
\[ (T, f) \mapsto a_1(Tf). \]
\begin{proposition}[\cite{Ru17}, Proposition 3.6] The pairing defined above induces an isomorphism
\[ D(N, R_m) \cong \Hom[cont]{\set{T}(N, R_m)}{R_m}  \]
where $\Hom[cont]{\set{T}(N, R_m)}{R_m}$ is the set of continuous\footnote{The topology on $\set{T}(N, R_m)$ is the projective limit topology induced by writing $\set{T}(N, R_m) = \varprojlim_k \set{T}_k(N,R_m)$ where $\set{T}_k(N,R_m)$ is the Hecke algebra (with the discrete topology) acting on $D_k(N,R_m)$.} $R_m$-linear maps $\mor{\set{T}(N,R_m)}{R_m}$.  
\end{proposition}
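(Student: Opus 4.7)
The plan is to reduce the statement to a finite-weight duality and then establish it there. The map $f \mapsto (T \mapsto a_1(Tf))$ is clearly $R_m$-linear and, for each $f \in D_k(N,R_m)$, the functional $T \mapsto a_1(Tf)$ factors through the discrete quotient $\set{T}_k(N,R_m)$ of $\set{T}(N,R_m) = \varprojlim_k \set{T}_k(N,R_m)$, so it lands in the continuous dual. Conversely, since $R_m$ is finite (hence discrete), any continuous $R_m$-linear map $\set{T}(N,R_m) \to R_m$ factors through some $\set{T}_k(N,R_m)$, whence
\[ \Hom[cont]{\set{T}(N, R_m)}{R_m} = \varinjlim_k \Hom{\set{T}_k(N, R_m)}{R_m}. \]
Combined with $D(N, R_m) = \varinjlim_k D_k(N, R_m)$, it suffices to prove, compatibly in $k$, that the pairing $(f,T) \mapsto a_1(Tf)$ induces an $R_m$-linear isomorphism $D_k(N, R_m) \xrightarrow{\sim} \Hom{\set{T}_k(N, R_m)}{R_m}$.

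At finite weight I would first verify non-degeneracy. If $a_1(Tf) = 0$ for all $T$, then taking $T = T_n$ with $p \nmid n$ yields $a_n(f) = a_1(T_n f) = 0$, and the remaining $p$-power coefficients of $f$ are extracted using the $U$ operator (directly if $p \mid N$, or via the continuity construction from level $Np$ when $p \nmid N$), forcing $f = 0$. Dually, if $a_1(Tf) = 0$ for all $f$, then for any $S \in \set{T}_k(N, R_m)$ and $f$ one has $a_1(T(Sf)) = a_1((ST)f) = 0$ by commutativity of the Hecke algebra, so the first part applied to $Tf$ gives $Tf = 0$ for all $f$; since $\set{T}_k(N, R_m)$ acts faithfully on $D_k(N, R_m)$ by definition, $T = 0$. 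To upgrade non-degeneracy to a bijection, I would lift to $\sh{O}_K$-coefficients: $D_k(N, \sh{O}_K)$ is a finitely generated, torsion-free $\sh{O}_K$-module, and classical Shimura--Hida duality applied weight-by-weight on the decomposition $D_k(N, K) = \bigoplus_{i \leq k} S_i(N, K)$, then assembled integrally on the saturated lattice of divided congruences, produces a perfect pairing over $\sh{O}_K$. Reduction modulo $\pi_K^m$ via \cref{dcbasechange} and \cref{heckebasechange} preserves this duality, because $\Hom{\set{T}_k(N, \sh{O}_K)}{-}$ commutes with base change when the first argument is finitely presented over $\sh{O}_K$.

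The hard part will be the integral duality over $\sh{O}_K$ at finite weight. Over $K$ it is a weight-by-weight consequence of classical Shimura--Hida duality, but the divided-congruences construction strictly enlarges $\bigoplus_{i \leq k} S_i(N, \sh{O}_K)$, and one must verify that this saturation inside $\sh{O}_K\dbr{q}$ is precisely what is needed for the Hecke pairing to remain perfect integrally at all primes, with the $U$ operator and the twist operators $\langle x, y\rangle$ behaving coherently across weights. Once this integral perfectness is established, the passage to $R_m$-coefficients and the subsequent direct limit are formal.
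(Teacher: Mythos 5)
Your overall strategy — pass to finite weight via $\set{T}(N,R_m)=\varprojlim_k\set{T}_k(N,R_m)$ and $D(N,R_m)=\varinjlim_k D_k(N,R_m)$, prove a perfect pairing integrally over $\order{K}$, then reduce modulo $\pi_K^m$ — is the expected route, and the reductions you perform are correct. Two points deserve sharpening.

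First, the step you flag as ``the hard part'' is in fact short once stated, and you should close it rather than leave it open. Over $K$ the pairing on $D_k(N,K)\times\set{T}_k(N,K)$ is perfect: both evaluation maps $D_k(N,K)\to\Hom[K]{\set{T}_k(N,K)}{K}$ and $\set{T}_k(N,K)\to\Hom[K]{D_k(N,K)}{K}$ are injective (the first by the $q$-expansion principle applied to $a_n(f)=a_1(T_nf)$, the second by your commutativity/faithfulness argument), so they are mutually inverse isomorphisms by comparing dimensions. Now given $\phi\in\Hom[]{\set{T}_k(N,\order{K})}{\order{K}}$, extend scalars to produce $f\in D_k(N,K)$ with $\phi(T)=a_1(Tf)$; every coefficient of $f$ is of the form $a_1(Tf)=\phi(T)\in\order{K}$ for $T$ a suitable Hecke operator, so $f\in D_k(N,K)\cap\order{K}\dbr{q}=D_k(N,\order{K})$ precisely by the saturation built into the definition of $D_k$. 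That is the entire content of integral perfectness; it would make the argument complete to spell this out rather than gesture at it. (The twist operators $\langle x,y\rangle$ play no role here; $\set{T}(N,R_m)$ is generated by $T_n$, $p\nmid n$, and $U$.)

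Second, your base-change claim is misstated: $\Hom[]{\set{T}_k(N,\order{K})}{-}$ does \emph{not} commute with arbitrary base change merely because the source is finitely presented — $R_m$ is not flat over $\order{K}$, so you need $\set{T}_k(N,\order{K})$ to be finite \emph{projective}. This holds, since it is a finitely generated torsion-free module over the DVR $\order{K}$ (being a subalgebra of $\End[\order{K}]{D_k(N,\order{K})}$ with $D_k(N,\order{K})$ finite free), hence finite free. But as written the justification is wrong and should be repaired; in particular the same observation is what lets you invoke $\set{T}_k(N,R_m)\cong\set{T}_k(N,\order{K})\otimes_{\order{K}}R_m$ at finite weight.
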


We now give two definitions the notions of strong, weak, and $dc$-weak eigenforms. The equivalence of these two definitions is guaranteed by the discussion above.

\begin{definition}\label{eigendefnI} \leavevmode
\begin{enumerate}
\item A (normalised\footnote{Note that this implies that $a_1(f) = 1$ since the operator $T_1$ is the identity.}) $dc$-weak eigenform with coefficients in $R_m$ of level $N$ is an element $f \in D(N, R_m)$ such that $T_n f = a_n(f)f$ for all $n \geq 1$, $p \nmid n$, and $Uf = a_p(f) f$. 
\item A $dc$-weak eigenform $f$ with coefficients in $R_m$ of level $N$ is called weak if there exists $k \geq 0$ (not necessarily unique) such that $f \in S_k(N, R_m)$.
\item A weak eigenform $f$ with coefficients in $R_m$ of level $N$ is called strong if there exists $k\geq 0$ (not necessarily unique), a finite extension $\order{L}$ of $\order{K}$, and an $\order{K}$-algebra homomorphism $\iota: \mor{\order{L}}{R_m}$ inducing a map $\iota^\ast: \mor{S_k(N, \order{L})}{S_k(N, R_m)}$ such that $f = \iota^\ast(f')$ for some normalised eigenform $f' \in S_k(N, \order{L})$.  
\end{enumerate}
\end{definition}
\begin{definition}\label{eigendefnII} \leavevmode
\begin{enumerate}
\item A $dc$-weak eigenform with coefficients in $R_m$ of level $N$ is an $R_m$-algebra homomorphism $\mor[\phi]{\set{T}(N, R_m)}{R_m}$.
\item A $dc$-weak eigenform $\phi$ with coefficients in $R_m$ of level $N$ is called weak if there exists $f \in S_k(N, R_m)$ for some $k \geq 0$ (not necessarily unique) and $f \in S_k(N, R_m)$ such that $\phi(T_n) = a_n(f)$ for all $n$ such that $p \nmid n$ and $\phi(U) = a_p(f)$. 
\item A weak eigenform with coefficients in $R_m$ of level $N$ is called strong if there exists $k \geq 0$ (not necessarily unique), a finite extension $\order{L}$ of $\order{K}$, an $\order{K}$-algebra homomorphism $\iota: \mor{\order{L}}{R_m}$ inducing a map $\iota^\ast: \mor{S_k(N, \order{L})}{S_k(N, R_m)}$, and $f \in S_k(N, R_m)$ such that $f = \iota^\ast(f')$ for some normalised eigenform $f' \in S_k(N, \order{K})$, $\phi(T_n) = a_n(f)$ for all $n$ such that $p\nmid n$ and $\phi(U) = a_p(f)$. 
\end{enumerate}
\end{definition}

For $dc$-weak eigenforms with coefficients in a finite field (i.e.\ when $m = 1$), we have the following result.
\begin{lemma}[Deligne-Serre lifting lemma]\label{dslifting} Every $dc$-weak eigenform in $D(N, \F[K])$ is strong. 
\end{lemma}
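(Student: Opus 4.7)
The approach is to combine continuity of the eigensystem with the classical Deligne--Serre lifting lemma applied to a finite-dimensional slice. The first step is to identify $f$ with a continuous $\F[K]$-algebra homomorphism $\phi : \set{T}(N, \F[K]) \to \F[K]$ via the pairing isomorphism of the preceding section. Since $\set{T}(N, \F[K]) = \varprojlim_k \set{T}_k(N,\F[K])$ with each $\set{T}_k(N,\F[K])$ discrete, continuity of $\phi$ forces it to factor through some $\set{T}_k(N, \F[K])$. This is the essential reduction: $S_{\leq k}(N, K) = \bigoplus_{i\leq k} S_i(N, K)$ is finite-dimensional over $K$, $D_k(N, \order{K})$ is a finitely generated (free) $\order{K}$-lattice inside it, and so $\set{T}_k(N, \order{K})$ is a finite, hence integral, $\order{K}$-algebra.

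Next, I would apply going-up in characteristic zero. By \cref{heckebasechange}, $\phi$ corresponds to a maximal ideal $\mathfrak{m} \subset \set{T}_k(N, \order{K})$ above $\pi_K$ with residue field $\F[K]$. Since $\set{T}_k(N, \order{K})$ is integral over $\order{K}$, there exists a minimal prime $\mathfrak{p} \subset \mathfrak{m}$ with $\mathfrak{p} \cap \order{K} = (0)$. Let $L$ be the fraction field of the domain $\set{T}_k(N, \order{K})/\mathfrak{p}$, a finite extension of $K$, and $\order{L}$ its ring of integers. The composition
\[ \tilde\phi : \set{T}_k(N, \order{K}) \longrightarrow \set{T}_k(N, \order{K})/\mathfrak{p} \hookrightarrow \order{L} \]
is a characteristic-zero eigensystem whose reduction modulo a maximal ideal of $\order{L}$ above $\mathfrak{m}/\mathfrak{p}$ recovers $\phi$.

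Via the pairing isomorphism applied over $L$, $\tilde\phi$ corresponds to an eigenform $\tilde f \in D_k(N, L)$. After scaling by a power of a uniformiser $\pi_L$, one has $\pi_L^t \tilde f \in S_{\leq k}(N, \order{L}) = \bigoplus_{i\leq k} S_i(N, \order{L})$; write $\pi_L^t \tilde f = \sum_i g_i$ with $g_i \in S_i(N, \order{L})$. Because the Hecke operators preserve the weight grading, each nonzero $g_i$ is separately an eigenform with eigensystem $\tilde\phi$. Since $a_1(\pi_L^t \tilde f) = \pi_L^t \neq 0$, at least one $g_i$ has $a_1(g_i) \neq 0$; normalising it produces a classical normalised eigenform $f' \in S_i(N, \order{L})$ for some $i \leq k$. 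Reducing via a chosen $\order{K}$-algebra map $\iota : \order{L} \to \F[K]$ gives a weak eigenform in $S_i(N, \F[K])$ with the same eigensystem as $f$, hence equal to $f$ by the $q$-expansion principle, exhibiting $f$ as strong.

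The main technical point I anticipate is the existence of the map $\iota : \order{L} \to \F[K]$: a priori the residue field of $\order{L}$ could strictly contain $\F[K]$. This is resolved by observing that $\mathfrak{m}$ has residue field exactly $\F[K]$, so upon replacing $\set{T}_k(N, \order{K})$ by its $\mathfrak{m}$-adic completion (a local $\order{K}$-algebra with residue field $\F[K]$) before choosing the minimal prime $\mathfrak{p}$, one may arrange that $\order{L}$ is a DVR with residue field $\F[K]$, and $\iota$ is simply the reduction map.
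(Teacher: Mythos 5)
Your strategy — identify $f$ with a continuous character of the Hecke algebra, factor through a finite level $\set{T}_k$, then apply going-up along $\order{K}\hookrightarrow \set{T}_k(N,\order{K})$ and decompose the resulting characteristic-zero eigenform by weight — is the standard Deligne--Serre-style lifting argument, and the paper itself gives no argument at all (it merely cites Lemme 6.11 of \cite{DS74} and Lemma 16 of \cite{CKW13}). However, your write-up has a genuine gap in its treatment of the $U$-operator when $p \nmid N$. You take $\set{T}_k(N,\order{K})$ to be the Hecke algebra acting on the lattice $D_k(N,\order{K})\subset S_{\leq k}(N,K)$ and assert it is a finite $\order{K}$-algebra. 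But for $p\nmid N$ the operator $U$ does not preserve $D_k(N,\order{K})$ at all: as the paper explains (Section~3), $U$ is obtained on $D(N,R_m)$ only via the $p$-adic completion $\set{V}^{par}(N,\order{K})$ and Gouv\^ea's identification with $D(Np,\order{K})$, and on $D(N,\order{K})$ in characteristic zero it is simply not defined. In fact, by \cref{comparehecke}(i) (Deo), the full Hecke algebra at a non-ordinary maximal ideal is a power series ring $\set{T}^{pf}_\idl{m}\dbr{U}$, which is emphatically not finite over $\order{K}$; so the ``integral, finite $\order{K}$-algebra'' you go up in cannot contain $U$. The same issue reappears in your weight-decomposition step: $T_n$ for $p\nmid n$ preserves $S_i(N,\order{L})$, but $U$ does not respect the weight grading at level $N$, so one cannot conclude that each $g_i$ is a $U$-eigenvector with eigenvalue $\tilde\phi(U)$.

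The rest of the going-up argument is sound and does produce a classical normalised eigenform $f'\in S_i(N,\order{L})$ matching all of $\phi(T_n)$ for $p\nmid n$ after reduction (your resolution of the residue-field issue via the minimal prime and torsion-freeness of $\set{T}_k$ is correct). What is missing is the separate step showing that $\phi(U)$ equals the reduction of $a_p(f')$ (i.e.\ of the classical $T_p$-eigenvalue). One standard repair is to run the going-up argument at level $Np$, where $U$ acts on the finite lattice $D_k(Np,\order{K})$ and preserves the weight decomposition of $S_{\leq k}(Np,\order{L})$, and then descend to level $N$ via mod-$p$ level lowering / $p$-stabilisation; another is to invoke the mod-$p$ identity $T_p\equiv U_p$ (since $p^{i-1}\equiv 0$ for $i\geq 2$) directly on the reduction of $f'$. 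Either way, this step needs to be made explicit; as written, your proof only establishes that $f$ is strong for the partially full eigensystem.
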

\begin{proof} See Lemme 6.11 of \cite{DS74} and Lemma 16 of \cite{CKW13}.
\end{proof}

\begin{remark}
Clearly, a strong eigenform is weak, and a weak eigenform is $dc$-weak, but these notions are not equivalent when $m \geq 2$. In \cref{section:wefmod4}, we will see that there are 16 $dc$-weak eigenforms of level $1$ with coefficients in $\Zmod{4}{}$, $8$ of which are weak. By \cref{hatada2}, only one of these is a strong eigenform. See also the discussion at the end of \S 3 in \cite{Ru17}. 
\end{remark}

The partial and full Hecke algebras are both semilocal complete and separated rings. Semilocality (i.e.\ having finitely many maximal ideals) can be deduced from \cref{finitemodp}. The maximal ideals of $\set{T}(N, \Z[p])$ are in one-to-one correspondence with eigenforms $\varphi:\mor{\set{T}(N,\Z[p])}{\F[p]}$ which in turn are in one-to-one correspondence with pairs $(\idl{m}, \lambda)$ of maximal ideals $\idl{m}$ of $\set{T}^{pf}(N, \Z[p])$ and eigenvalues $\varphi(U) = \lambda$ of $U$. Thus we have a decomposition
\[ \set{T}(N, \Z[p]) = \prod_{(\idl{m},\lambda)} \set{T}(N, \Z[p])_{\idl{m}, \lambda}, \indent  \set{T}^{pf}(N, \Z[p]) = \prod_{\idl{m}} \set{T}^{pf}(N, \Z[p])_{\idl{m}}\]
where $\set{T}^{pf}(N, \Z[p])_{\idl{m}}$ is the localisation of $\set{T}^{pf}(N, \Z[p])$ at the maximal ideal $\idl{m}$, $\set{T}(N, \Z[p])_{\idl{m},\lambda}$ is the localisation of $\set{T}(N, \Z[p])$ at the maximal ideal corresponding to the pair $(\idl{m}, \lambda)$.
\begin{proposition}\label{comparehecke}\leavevmode \begin{enumerate}[(i)]
\item For every maximal ideal $\idl{m}$ of $\set{T}^{pf}(N, \Z[p])$, there is a natural isomorphism of $\set{T}^{pf}(N, \Z[p])$-algebras \[\set{T}^{pf}(N, \Z[p])_{\idl{m}}\dbr{U}\cong \set{T}(N, \Z[p])_{\idl{m}, 0}.\]
\item Let $N = p^r N_0$ with $p \nmid N_0$. The algebras $\set{T}^{pf}(N, \Z[p])$ and $\set{T}^{pf}(N_0, \Z[p])$ are naturally isomorphic. 
\end{enumerate}
\end{proposition}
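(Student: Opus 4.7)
I would treat the two parts separately; part (ii) is essentially immediate from the identification of Katz's space of $p$-adic modular forms, while part (i) requires a functor-of-points/duality argument.

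For part (ii), the key fact is that $\set{V}^{par}(N, \order{K})$ is insensitive to the $p$-part of the level: by Proposition I.3.9 of \cite{Go88} (already invoked in the proof of the preceding lemma), $\set{V}^{par}(N, \order{K}) = \set{V}^{par}(N_0, \order{K})$ when $N = p^r N_0$ with $p \nmid N_0$. By \cref{dcbasechange} and the fact that $D(N, \order{K})$ is $p$-adically dense in $\set{V}^{par}(N, \order{K})$, reducing modulo $\pi_K^m$ identifies $D(N, R_m) = D(N_0, R_m)$ as submodules of $R_m\dbr{q}$, equivariantly for the action of every $T_n$ with $p \nmid n$. Hence the subalgebras of $\End[R_m]{D(N, R_m)}$ generated by these operators coincide, yielding $\set{T}^{pf}(N, R_m) \cong \set{T}^{pf}(N_0, R_m)$ for each $m$. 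Taking $K = \Q[p]$ and passing to the inverse limit over $m$ gives the statement over $\Z[p]$.

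For part (i), the natural map is built from the inclusion $\set{T}^{pf}(N, \Z[p]) \hookrightarrow \set{T}(N, \Z[p])$ together with the element $U$: after localizing at $(\idl{m}, 0)$, the element $U$ lies in the maximal ideal of the complete local ring $\set{T}(N, \Z[p])_{\idl{m}, 0}$, so the universal property of formal power series gives a continuous surjection
\[ \set{T}^{pf}(N, \Z[p])_{\idl{m}}\dbr{U} \twoheadrightarrow \set{T}(N, \Z[p])_{\idl{m}, 0}. \]
To verify this is an isomorphism, I would compare functors of points. By the pairing proposition of the previous section, continuous $\Z[p]$-algebra maps $\set{T}(N, \Z[p])_{\idl{m}, 0} \to R$ (for $R$ Artinian local) correspond to $dc$-weak eigenforms in $D(N, R)$ whose partial Hecke eigensystem reduces to the one prescribed by $\idl{m}$ and whose $U$-eigenvalue lies in the maximal ideal of $R$. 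Such data is the same as a continuous map $\set{T}^{pf}(N, \Z[p])_{\idl{m}} \to R$ together with a topologically nilpotent scalar $\phi(U) \in R$, i.e.\ a continuous map out of $\set{T}^{pf}(N, \Z[p])_{\idl{m}}\dbr{U}$. Equality of the two functors of points forces the surjection to be an isomorphism.

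The hard part is the bijection on points in (i), in particular showing that every pair (lift of partial eigensystem, nilpotent scalar) actually arises from a $dc$-weak eigenform, so that $U$ satisfies no algebraic relation with the partial Hecke operators in $\set{T}(N, \Z[p])_{\idl{m}, 0}$ beyond lying in the maximal ideal. This ultimately rests on the pairing between $\set{T}(N, R)$ and $D(N, R)$ together with the density of divided congruences in $\set{V}^{par}(N, \order{K})$, which guarantee that the full Hecke algebra is large enough to realize every such pair.
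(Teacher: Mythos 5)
Part (ii) of your proposal is fine and tracks the cited route: $\set{V}^{par}(N,\order{K})$ depends only on the prime-to-$p$ level by \cite{Go88}, so after tensoring with $R_m$ the modules $D(N,R_m)$ and $D(N_0,R_m)$ agree inside $R_m\dbr{q}$ and the partial Hecke algebras are literally the same subalgebra of $\End[R_m]{D(N,R_m)}$; passing to the inverse limit over $m$ recovers the integral statement. (You should say a word about a base-change property for the partial Hecke algebra, since the paper only records \cref{heckebasechange} for the full one, but this is minor.)

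Part (i) has a genuine gap. You correctly set up the continuous surjection $\set{T}^{pf}(N,\Z[p])_{\idl{m}}\dbr{U}\twoheadrightarrow\set{T}(N,\Z[p])_{\idl{m},0}$ via the universal property, and you correctly locate the crux: one must show that $U$ satisfies no relation over $\set{T}^{pf}(N,\Z[p])_{\idl{m}}$ beyond being topologically nilpotent, i.e.\ every pair consisting of a lift of the residual partial eigensystem together with a topologically nilpotent scalar must actually arise from a $dc$-weak eigenform. But you then assert this follows from ``the pairing $\ldots$ together with the density of divided congruences in $\set{V}^{par}(N,\order{K})$, which guarantee that the full Hecke algebra is large enough.'' That is not an argument: density of $D(N,\order{K})$ in $\set{V}^{par}(N,\order{K})$ tells you nothing about being able to prescribe $U$-eigenvalues freely, and the duality only converts the algebra statement into an existence statement for eigenforms without producing them. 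What is actually needed is a concrete construction, namely the $V$-operator (Frobenius) twist: start from a form $f$ which is an eigenform for the partial Hecke algebra with residual eigensystem $\idl{m}$, take its $p$-depletion $f^{[p]}:=\sum_{p\nmid n}a_n(f)q^n$ (which kills the $U$-eigenvalue), and then form $g_\lambda:=\sum_{k\geq 0}\lambda^k V^k f^{[p]}$ for a given topologically nilpotent $\lambda$; one checks $Ug_\lambda=\lambda g_\lambda$ and that $g_\lambda$ has the same prime-to-$p$ eigensystem as $f$. Over $R_m$ the sum is finite because $\lambda$ is nilpotent, and one can then take an inverse limit. Without this (or an equivalent deformation-theoretic argument), the injectivity of your surjection is unproved, which is exactly the content the paper delegates to Proposition 17 of \cite{Deo17}. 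In short: the surjectivity half of (i) is correct, the reduction to a functor-of-points statement is a reasonable framing, but the essential existence step is missing and is not a consequence of density.
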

\begin{proof}\leavevmode
\begin{enumerate}[(i)]
\item \cite{Deo17}, Proposition 17.
\item This follows from \cite{Go88}, Proposition I.3.9. See \cite{Deo17}, Corollary 13 and \cite{BK15}, Corollary 13.
\end{enumerate}
\end{proof}

%%%%%%%%%%%%%%%%%%%%%%%%%%%%%%%%%%%%%%%%%%%%%%%%%%%%%%%%%%
\section{Comparing weak and $dc$-weak eigenforms}\label{section:compweakdcweak}
For later use in this section and in the rest of the paper, we let 
\[E_k := 1 - \frac{2k}{B_k} \sum_{n \geq 1} \sigma_{k-1}(n) q^n = 1 - \frac{2k}{B_k} \sum_{n \geq 1} \sum_{d|n} d^{k-1} q^n\] for each even integer $k \geq 4$ be the Eisenstein series of level $1$ and weight $k$ where $B_k$ is the $k$th Bernoulli number. We also recall that, as a consequence of the Clausen-Von Staudt theorem (\cite{Se73}, \S 1.1 (d)), the coefficients of $E_k$ are $p$-integral whenever $k \equiv 0 \pmod{p-1}$ and, for each integer $m \geq 1$,
\[ E_k \equiv \begin{cases}1 \pmod{p^m} \indent \mbox{if } k \equiv 0 \pmod{p^{m-1}(p-1)} \mbox{ and } p \geq 3, \\ 1 \pmod{2^m} \indent \mbox{if } k \equiv 0 \pmod{2^{m-2}}.\end{cases}  \]

For each $m \geq 1$, define $\gamma(m)$ to be the positive integer such that $\Zmod{p}{\gamma(m)}$ is the image of $\Z[p]$ in $R_m = \order{K}/\pi_K^m \order{K}$. Define a homomorphism $\eta: \mor{\units{\Z[p]}}{\End[R_m]{D(N,R_m)}}$ by $\eta(x)(f) = [x]f$. By \cite{Deo17}, Lemma 10, we have $\im \eta \subset \set{T}(N, R_m)$. In particular, a $dc$-weak eigenform is also an eigenvector for the operators 
\[ t_\Lambda := \begin{cases}[1 + p]\indent p \geq 3 \mbox{ or } N \geq 3, \\ [1+4] \indent \mbox{ otherwise }.\end{cases}\] 

In \cite{Ru17}, we obtained the following theorem.

\begin{theorem}[\cite{Ru17}, Theorem 6.2]\label{comparedcweakI} Let $f \in S(N,R_m)$ be a $dc$-weak eigenform. Then $f$ is weak if and only if the eigenvalue of $f$ under the action of $t_\Lambda$ lies in $\Zmod{p}{\gamma(m)}$.
\end{theorem}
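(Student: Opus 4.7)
The forward direction is immediate: if $f \in S_k(N, R_m)$, then $t_\Lambda f = (1+p)^k f$ (for $p \geq 3$), and $(1+p)^k$ lies in the image $\Zmod{p}{\gamma(m)}$ of $\Z[p]$ in $R_m$.

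For the converse I would exploit the full $\units{\Z[p]}$-action on $D(N, R_m)$. By \cite{Deo17}, Lemma 10, every operator $[x]$ with $x \in \units{\Z[p]}$ lies in $\set{T}(N, R_m)$, so the $dc$-weak eigenform $f$ is a simultaneous eigenvector for all $[x]$, defining a continuous character $\chi_f : \units{\Z[p]} \to \units{R_m}$. The first key step is a direct sum decomposition
\[
S(N, R_m) = \bigoplus_{\bar k \in \set{Z}/M\set{Z}} S^{(\bar k)}(N, R_m), \qquad M = (p-1)p^{\gamma(m)-1},
\]
where $S^{(\bar k)}$ is spanned by the reductions of modular forms of weights congruent to $\bar k$ modulo $M$. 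This decomposition, and the sharpness of the period $M$, rest on the Clausen--Von Staudt-type congruence $E_M \equiv 1 \pmod{\pi_K^m}$ recorded at the start of the section, which allows freely shifting a form in $S_k$ into $S_{k+M}$. On $S^{(\bar k)}$, the operator $[x]$ acts as multiplication by $x^{\bar k} \in \units{R_m}$, well-defined since $x^M = 1$ in $\units{R_m}$ for all $x \in \units{\Z[p]}$.

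Decomposing $f = \sum_{\bar k} f^{(\bar k)}$ and applying the eigenvector equation $[x]f = \chi_f(x) f$, every $\bar k$ with $f^{(\bar k)} \neq 0$ must satisfy $x^{\bar k} = \chi_f(x)$ for all $x \in \units{\Z[p]}$; since the characters $x \mapsto x^{\bar k}$ on $\units{\Z[p]}$ are pairwise distinct for distinct classes $\bar k \in \set{Z}/M\set{Z}$, only one class $\bar k_0$ contributes, so $f \in S^{(\bar k_0)}(N, R_m)$. The hypothesis $\chi_f(1+p) \in \Zmod{p}{\gamma(m)}$ ensures the existence of an integer $k_0$ with $(1+p)^{k_0} = \chi_f(1+p)$ in $R_m$, fixing an integer lift of $\bar k_0$; multiplying $f$ by a suitable power of $E_M$ then exhibits $f$ inside $S_k(N, R_m)$ for some $k \equiv k_0 \pmod M$, proving $f$ weak. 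The main obstacle I expect is establishing the direct sum decomposition with the sharp period $M = (p-1)p^{\gamma(m)-1}$: this demands careful $q$-expansion bookkeeping verifying that $E_M \equiv 1 \pmod{\pi_K^m}$ while $E_{M'} \not\equiv 1 \pmod{\pi_K^m}$ for every proper divisor $M'$ of $M$, and is especially delicate in the ramified setting where $\gamma(m) < m$. A secondary subtlety is the exceptional case $p = 2$, $N = 1$: there $[-1]$ acts trivially, so one works with $\units{\Z[2]}/\{\pm 1\}$, takes $M = 2^{\gamma(m)-2}$, and uses $t_\Lambda = [5]$ in place of $[1+p]$, while the character-theoretic core of the argument carries through without essential change.
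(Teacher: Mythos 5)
Your forward direction is correct, but the converse rests on a claim that is false, and the error is not the sharpness of $M$ (which you flagged as the main obstacle) but the directness of the sum itself. The sum
\[
S(N, R_m) \;=\; \sum_{\bar k \in \set{Z}/M\set{Z}} S^{(\bar k)}(N, R_m)
\]
is not a direct sum once $m \geq 2$. Take $p = 5$, $K = \Q[5]$, $m = 2$, so $R_2 = \set{Z}/25\set{Z}$, $\gamma(m)=2$, $M = 20$. Since $E_4 \equiv 1 \pmod{5}$ but $E_4 \not\equiv 1 \pmod{25}$ (the coefficient $240 \equiv 15 \pmod{25}$), we have $5\Delta = 5\Delta E_4$ as elements of $(\set{Z}/25\set{Z})\dbr{q}$, yet the two witnesses live in weights $12$ and $16$ with $12 \not\equiv 16 \pmod{20}$. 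Hence $S^{(\overline{12})}(1, R_2) \cap S^{(\overline{16})}(1, R_2) \ni 5\Delta \neq 0$. More generally, any $\pi_K$-torsion element of $S(N, R_m)$ may lie in several pieces $S^{(\bar k)}$, so $f = \sum_{\bar k} f^{(\bar k)}$ has no canonical components and your eigenspace comparison is not available. Even setting the decomposition aside, a related problem persists: $R_m$ is not a domain, so $(x^{\bar k} - \chi_f(x)) f^{(\bar k)} = 0$ does not force $x^{\bar k} = \chi_f(x)$ when $f^{(\bar k)}$ has nontrivial annihilator; ``pairwise distinct characters'' is too weak a hypothesis to single out one $\bar k_0$. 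Your argument is correct for $m = 1$, where $R_1 = \F[K]$ is a field and the classical weight-grading mod $p-1$ is direct, but it does not bootstrap to $m \geq 2$.

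The proof in \cite{Ru17}, sketched in \cref{section:compweakdcweak}, takes a genuinely different, geometric route that avoids $q$-expansion bookkeeping entirely. The eigenvector property under the full $\units{\Z[p]}$-action is used to untwist the trivialisation: from the divided congruence $f$ one manufactures a rule $g$ on pairs $(E/R, \omega)$ of an ordinary elliptic curve together with an invariant differential, whose $q$-expansion agrees with that of $f$. The hypothesis that the $t_\Lambda$-eigenvalue lies in $\Zmod{p}{\gamma(m)}$ is what allows $g$ to be a genuine homogeneous weight-$k_0$ rule for an \emph{integer} $k_0$. One then invokes a Katz-type statement (the Lemma immediately following \cref{comparedcweakI} in the text, proved there for $p=2,3$ using the explicit $j$-line presentation and for $p\geq 5$ by citing \cite{Ka73}, Proposition 2.7.2) to conclude that such a rule is the $q$-expansion of a true modular form over $R_m$. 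The moduli-theoretic step is essential and cannot be replaced by the naive weight decomposition you propose.
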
 

The proof of \cref{comparedcweakI} in \cite{Ru17} relies on the results of \cite{Ka73} and \cite{Ka75}, and therefore holds for all $N \geq 1$ if $p \geq 5$ as well as for $N \geq 2$ if $p = 3$ and for $N \geq 3$ if $p = 2$ (which are the assumptions of \cite{Ka73} and \cite{Ka75}). However, with a little bit of extra work, we can see that the result actually holds for $N \geq 1$ and $p = 2$ or $3$. Note that when $N = p = 2$, we have $D(\Gamma_0(2), R_m) = D(1, R_m)$ by \cite{Go88}, Proposition I.3.9. So it is enough to consider the cases $N = 1$ and $p = 2$ or $3$. 

We will briefly review and explain the argument. There are two main ingredients used in the proof. First, it uses the action of $\units{\Z[p]}$ on divided congruences. As mentioned in \cref{section:modformdc}, this action still exists when $N = 1$ and $p = 2$ or $3$. If $f$ is a $dc$-weak eigenform, then $f$ is an eigenvector for the operators $[x]$ for all $x \in \units{\Z[p]}$. Arguing the same way as in the proof of \cite{Ru17}, Theorem 6.2, we construct from $f$ a rule $g$ defined on isomorphism classes of couples $(E/R, \omega)$ where $R$ is an $R_m$-algebra, $E$ is an elliptic curve over $R$, and $\omega$ is an invariant differential on $E$. The $q$-expansion of $g$ (i.e.\ its evaluation at the Tate curve) is the same as the $q$-expansion of $f$. Second, the proof uses the following result.

\begin{lemma} Let $g$ be a rule which to every $R_m$-algebra $R$ and every fibre-by-fibre ordinary elliptic curve $E/R$ together with an invariant differential $\omega$ assigns an element of $R$ depending only on the isomorphism class of $(E/R, \omega)$ and whose formation is compatible with base change. Then there exists a true modular form $h$ with coefficients in $R_m$ and whose $q$-expansion is equal to that of $g$. 
\end{lemma}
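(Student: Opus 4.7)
The plan is to identify the rule $g$ with a $p$-adic modular form over $R_{m}$, exploit that over the Artinian ring $R_{m}$ every $p$-adic modular form is automatically a divided congruence, and then upgrade to a classical modular form using the weight structure together with Eisenstein-series congruences.

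First, I would translate $g$ into Katz's framework: for an ordinary elliptic curve $E$ over an $R_{m}$-algebra $R$, specifying an invariant differential $\omega$ is equivalent to specifying a trivialization $\varphi: \widehat{E} \xrightarrow{\sim} \widehat{\Gm}$ via $\omega = \varphi^{*}(dT/T)$, so $g$ defines an element of $\set{V}(N,R_{m})$ and in particular its $q$-expansion is well defined by evaluating at the Tate curve with its canonical differential. Because $\pi_{K}$ is nilpotent in $R_{m}$, the $p$-adic completion in \cref{dckey} collapses, and combined with \cref{dcbasechange} this yields $\set{V}(N,R_{m}) = D(N,R_{m})$; hence $g$ automatically lies in $D(N,R_{m})$ as a divided congruence modulo $\pi_{K}^{m}$.

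Next, I would bring in the weight structure. In the intended application (the extension of \cref{comparedcweakI} to $N = 1$, $p \in \{2,3\}$), the rule $g$ is built from a $dc$-weak eigenform of a fixed weight $k$, so $g(E, \lambda\omega) = \lambda^{-k} g(E,\omega)$, equivalently $[x] g = x^{k} g$ for all $x \in \units{\Z[p]}$. Lifting $g$ to $G \in D(N, \order{K})$ and writing $\pi_{K}^{t} G = \sum_{i} G_{i}$ with $G_{i} \in S_{k_{i}}(N, \order{K})$, after projecting onto the weight-$k$ eigenspace of $[x]$ one may arrange the weights $k_{i}$ contributing nontrivially modulo $\pi_{K}^{m}$ to satisfy $k_{i} \equiv k \pmod{p^{m-1}(p-1)}$ when $p \geq 3$ and $k_{i} \equiv k \pmod{2^{m-2}}$ when $p = 2$. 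I would then homogenize the weight using Eisenstein series: taking $w$ equal to $p^{m-1}(p-1)$ or $2^{m-2}$ according to $p$, the series $E_{w}$ satisfies $E_{w} \equiv 1 \pmod{\pi_{K}^{m}}$, so multiplying each $G_{i}$ by an appropriate power of $E_{w}$ shifts every term to a common large weight $K$ without altering its $q$-expansion modulo $\pi_{K}^{m}$. The resulting sum lies in $S_{K}(N, \order{K})$ and reduces modulo $\pi_{K}^{m}$ to $\pi_{K}^{t} g$; cancelling the factor $\pi_{K}^{t}$ supplies the sought-after $h \in S_{K}(N, R_{m})$ with the same $q$-expansion as $g$.

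The hard part will be the final cancellation of $\pi_{K}^{t}$: one must verify that divisibility by $\pi_{K}^{t}$ genuinely holds in the integral lift after the Eisenstein shift, not merely modulo $\pi_{K}^{m}$. An alternative that sidesteps this bookkeeping is induction on $m$, with base case $m = 1$ given by Katz's theorem that any rule on ordinary elliptic curves over $\F[K]$ of fixed weight arises from a classical mod $p$ modular form (after multiplying by a power of the Hasse invariant), and the inductive step passing from $R_{m-1}$ to $R_{m}$ via the short exact sequence $0 \to \pi_{K}^{m-1} R_{m} \to R_{m} \to R_{m-1} \to 0$ and a second application of the base case to $(g - \widetilde{h})/\pi_{K}^{m-1}$ for a chosen classical lift $\widetilde{h} \in S(N,R_{m})$ of the mod-$\pi_{K}^{m-1}$ solution.
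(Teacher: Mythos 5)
Your proposal takes a genuinely different route from the paper, and unfortunately the gap you flag is fatal rather than a bookkeeping nuisance, while the fallback induction has an unaddressed problem at the base case.

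The central difficulty with your main line of argument is exactly the one you identify: showing $g \in D(N,R_m)$ (which does follow from $\set{V}^{par}(N,\order{K})\otimes_{\order{K}} R_m = D(N,R_m)$ once one first passes from rules on pairs $(E,\omega)$ to rules on trivialized curves) does not by itself put $g$ in $S(N,R_m)$. From $G \in D(N,\order{K})$ you get $\pi_K^t G \in S_{\le k}(N,\order{K})$, and after the Eisenstein shift you know only that $\pi_K^t g$ is the image of an honest modular form; nothing forces $g$ itself to lie in the image of $S(N,\order{K}) \to S(N,R_m)$. This is not a formality one can wave away: the entire content of the surrounding discussion in the paper (Katz's theorem \cref{katzdcmodp}, \cref{comparedcweakI}, and \cref{dcweakisweak}) is that $D(N,R_m)$ is strictly larger than $S(N,R_m)$ and that one needs the weight/character structure to pass from one to the other. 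Your fallback to induction on $m$ is not worked out, and its base case is a real obstruction here: for $p=2,3$ the Hasse invariant has weight $p-1 \in \{1,2\}$, so there is no Eisenstein series reducing to it in level $1$, and the standard ``multiply by $E_{p-1}^t$'' argument that underlies Katz's Proposition 2.7.2 for $p\ge 5$ simply does not run. The case $p\in\{2,3\}$ is precisely the case the lemma is needed for, so invoking ``Katz's theorem'' at $m=1$ is circular in the relevant regime. A smaller issue: your opening claim that specifying $\omega$ is \emph{equivalent} to specifying $\varphi$ is false (trivializations form a $\units{\Z[p]}$-torsor, differentials an $R^\times$-torsor), though you only use the easy direction so this does not sink the argument.

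The paper sidesteps all of this by never passing through divided congruences at all, and it also does not assume any homogeneity on $g$. It evaluates $g$ directly at the explicit Weierstrass curve
\[
C_j : y^2 + xy = x^3 - 36(j-1728)^{-1}x - (j-1728)^{-1}
\]
over $R_m[j^{-1}]$, noting that $(j-1728)^{-1} = \sum_{s\ge 0} 2^{6s}3^{3s}j^{-s-1}$ is a \emph{polynomial} in $j^{-1}$ because $p \in \{2,3\}$ is nilpotent in $R_m$, and that $C_j$ is fibrewise ordinary because $j=0$ is the only supersingular $j$-invariant in characteristics $2$ and $3$. Thus $G := g(C_j, \omega_{\mathrm{can}})$ is a polynomial in $j^{-1} = \Delta/E_4^3$ with coefficients in $R_m$, and its $q$-expansion equals that of $g$ since $C_j$ specializes to the Tate curve. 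Multiplying by $E_4^t$ with $t$ a large power of $p$ so that $E_4^t \equiv 1 \pmod{p^{\gamma(m)}}$ clears denominators and yields an isobaric polynomial in $E_4$ and $\Delta$, a genuine modular form with the same $q$-expansion as $g$. This handles all $m$ in one stroke, needs no lifting to $\order{K}$, and avoids the Hasse-invariant issue that blocks your base case.
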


\begin{proof} If $p \geq 5$, this is Proposition 2.7.2 of \cite{Ka73}. Suppose $p = 2$ or $3$. The curve
\[ C_j : y^2 + xy = x^3 - 36(j-1728)^{-1}x - (j-1728)^{-1}   \]
is an elliptic curve with $j$-invariant $j$ defined over $R_m[j^{-1}]$ since
\[\frac{1}{(j-1728)} = \sum_{s \geq 0} 2^{6s}3^{3s}j^{-s-1}\]
and $p$ ($= 2$ or $3$) is nilpotent in $R_m$. Moreover, $C_j$ has fibre-by-fibre ordinary reduction as $j = 0$ is the only supersingular $j$-invariant in characteristics $2$ and $3$ (\cite{Si09}, \S V.4). Evaluating $g$ at $C_j$ with its canonical invariant differential gives us a polynomial $G$ in $j^{-1} = \Delta / E_4^3$ where $\Delta$ is the unique cuspform of level $1$ and weight $12$. The curve $C_j$ is in fact isomorphic to the Tate curve if $\inv{j}$ is $q$-expanded. Since the formation of $g$ commutes with base change, the $q$-expansion of $G$ obtained by expanding $\inv{j}$ is the $q$-expansion of $g$. Multiplying $G$ by $E_4^t$ where $t \gg 0$ and $E_4^t \equiv 1 \pmod{p^{\gamma(m)}}$, we obtain an isobaric polynomial\footnote{A polynomial in $E_4$ and $\Delta$ is isobaric if each of its monomial terms has the same weight.} in $E_4$ and $\Delta$. Therefore, the $q$-expansion of $g$ comes from the $q$-expansion of a true modular form in characteristic $0$.   
\end{proof}

The space $D(N, R_m)$ is usually much larger than $S(N, R_m)$, and a $dc$-weak eigenform in $D(N, R_m)$ does not necessarily lie in $S(N, R_m)$. In other words, $dc$-weak eigenforms in $D(N, R_m)$ might not be weak. We will later exhibit explicit examples of $dc$-weak eigenforms that are not weak. But first, we will completely determine which $dc$-weak eigenforms are weak.

Let $f \in D(N, R_m)$ be a $dc$-weak eigenform such that $t_\Lambda f = \lambda f$ for some $\lambda \in \Zmod{p}{\gamma(m)}$. By the Deligne-Serre lifting lemma (\cref{dslifting}), the image of $f$ in $D(N, \F[K])$ is strong, so in particular it is weak. Thus $\lambda \equiv 1 \pmod{p}$. When $p \geq 3$, $1 + p$ is a topological generator of $1 + p\Z[p]$, and therefore we have $t_\Lambda f = (1+p)^{\beta(f)} f$ for some $\beta(f) \in \Zmod{p}{\gamma(m)-1}$.

\begin{theorem}\label{dcweakisweak} Suppose $p \geq 3$. Let $f \in D(N, R_m)$ be a $dc$-weak eigenform such that the eigenvalue of $f$ under the action of $t_\Lambda$ lies in $\Zmod{p}{\gamma(m)}$. Then $f\in S_k(N, R_m)$ for some $k \equiv \beta(f) \pmod{p^{\gamma(m)-1}}.$ In particular, $f$ is a weak eigenform.  
\end{theorem}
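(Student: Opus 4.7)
The plan is to promote the hypothesis on the single eigenvalue of $t_\Lambda = [1+p]$ to full control over how the character $\units{\Z[p]} \to \units{R_m}$ acts on $f$, and then invoke the rule-on-$(E,\omega)$ lemma stated in the preceding discussion to descend $f$ from divided congruences to a classical modular form.

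First, I would pin down the $\units{\Z[p]}$-character. Since $[x] \in \set{T}(N, R_m)$ for every $x \in \units{\Z[p]}$ by \cite{Deo17}, Lemma 10 (cited above), the $dc$-weak eigenform $f$ is an eigenvector for every $[x]$, giving a continuous character $\mu: \units{\Z[p]} \to \units{R_m}$. Writing $\units{\Z[p]} = \mu_{p-1} \times (1 + p\Z[p])$ (for $p$ odd), the hypothesis together with the fact that $1+p$ is a topological generator of $1 + p\Z[p]$ yields $\mu|_{1 + p\Z[p]}(x) = x^{\beta(f)}$ in the image of $\Z[p]$. For the $\mu_{p-1}$-factor, the Deligne-Serre lifting lemma (\cref{dslifting}) says that the reduction $\bar{f}\in D(N,\F[K])$ is a classical eigenform of some weight $k_0$, so $\mu(x)\equiv x^{k_0}\pmod{\pi_K}$ for $x \in \mu_{p-1}$; since $\order{K}$ is Henselian the $(p-1)$-th roots of unity in $R_m$ lift uniquely from $\F[K]$, so $\mu|_{\mu_{p-1}}(x) = x^{k_0}$ already lands in $\Z[p] \subset R_m$. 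By CRT, pick an integer $k$ with $k \equiv \beta(f) \pmod{p^{\gamma(m)-1}}$ and $k \equiv k_0 \pmod{p-1}$; then $\mu(x) = x^k$ for all $x \in \units{\Z[p]}$, and $\mu$ has image in $\Zmod{p}{\gamma(m)}$.

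Next, I would use this homogeneity to build a rule $g$ on triples $(E/R, \iota_N, \omega)$ exactly as in the argument that precedes the theorem. Given $(E, \iota_N, \omega)$ with $E$ fibre-by-fibre ordinary, pass (faithfully flatly if needed) to a base over which a trivialization $\varphi$ of $\widehat{E}$ exists, and set
\[ g(E, \iota_N, \omega) := u^{-k}\, f(E, \iota_N, \varphi), \]
where $u \in \units{R}$ is the unit with $\omega = u\,\omega_\varphi$. The identity $[x]f = x^k f$ is exactly the relation $f(E, \iota_N, x^{-1}\varphi) = x^k f(E, \iota_N, \varphi)$, which makes $g$ independent of $\varphi$ (the $u$-factor absorbs the scaling of $\varphi$), compatible with base change, and hence well-defined by faithfully flat descent. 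By construction the $q$-expansion of $g$ (evaluation at the Tate curve with its canonical differential) agrees with the $q$-expansion of $f$.

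Finally, I would invoke the lemma stated just before the theorem to get a classical modular form $h$ of level $N$ with coefficients in $R_m$ whose $q$-expansion matches that of $g$, and hence of $f$. Multiplying through by an appropriate power of $E_4$ congruent to $1$ modulo $p^{\gamma(m)}$ as in the lemma's proof places $h$ in $S_k(N, R_m)$ for the chosen $k$, and injectivity of the $q$-expansion gives $f = h$, so $f$ is weak. The main obstacle is Step 1 -- upgrading the hypothesis on $t_\Lambda$ alone to full control of $\mu$ on $\mu_{p-1}$ -- which is why Deligne-Serre lifting is indispensable; the rest is a routine adaptation of the argument for \cref{comparedcweakI}, with the added observation that the lemma extends to level-$N$ rules via Katz's moduli-stack viewpoint.
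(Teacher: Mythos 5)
Your proposal is correct in outline but takes a genuinely different route from the paper. The paper proves the theorem by induction on $m$: it lifts the inductive claim from $R_{m-1}$ to find $f_{m-1}\in S_k(N,R_m)$ close to $f$, adjusts $f_{m-1}$ by a power of an Eisenstein series $E_h$ so that its weight is $\equiv\beta(f)\pmod{p^{\gamma(m)-1}}$, writes $f-f_{m-1}E_h^{\cdots}=\pi_K^{m-1}g$, and then applies $[1+p]$ to deduce that $\bar g$ in $D(N,\F[K])$ is $(1+p\Z[p])$-invariant; \cref{katzdcmodp} then gives $\bar g\in S(N,\F[K])$, which is lifted to finish showing $f\in S(N,R_m)$, and finally \cref{comparedcweakI} is invoked to conclude. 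The induction exists precisely so that \cref{comparedcweakI} (which is stated for $f\in S(N,R_m)$) can be applied as a black box without re-deriving its geometric proof. Your approach instead re-runs the geometric argument of \cref{comparedcweakI} directly on $f\in D(N,R_m)$: pin down the full $\units{\Z[p]}$-character $\mu$ (Deligne--Serre handles the $\mu_{p-1}$-part, the $t_\Lambda$-hypothesis plus topological generation handles $1+p\Z[p]$), use the homogeneity $[x]f=x^kf$ to descend $f$ to a rule on $(E,\iota_N,\omega)$, and invoke the Katz-type lemma to produce a true modular form whose $q$-expansion equals that of $f$. What the paper's induction buys is that the geometric input is encapsulated once in \cref{comparedcweakI} and the rest is elementary $q$-expansion manipulation with \cref{katzdcmodp}; what your direct approach buys is avoiding the induction and the Eisenstein twist bookkeeping altogether.

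Two points worth tightening in your write-up. First, you invoke the level-$1$ lemma preceding the theorem but then say ``the lemma extends to level-$N$ rules via Katz's moduli-stack viewpoint''; this is the same issue the paper itself has to handle when extending \cref{comparedcweakI} to all $N\geq1$ for $p=3$, and it deserves the same care (Katz's Proposition 2.7.2 for $p\geq5$, the explicit $C_j$-argument for $N=1$, and the $N\geq2$ case being covered by Katz's original hypotheses). Second, the closing step about multiplying by a power of $E_4$ to ``place $h$ in $S_k(N,R_m)$ for the chosen $k$'' is both unnecessary and not quite correct as phrased: the lemma already produces $h$ in some $S_{k'}(N,R_m)$, and $[x]$-equivariance plus the injectivity of the $q$-expansion map force $k'\equiv\beta(f)\pmod{p^{\gamma(m)-1}}$, which is all the theorem asserts. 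You should simply take $k:=k'$ rather than try to force agreement with the integer you chose by CRT, since the $E_4$-twisting only lets you shift the weight by multiples of $4\cdot p^{\gamma(m)-1}$ and need not reach your chosen $k$ exactly.
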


\begin{proof} We proceed by induction on $m$. If $m = 1$, then $R_m = \F[K]$, and the statement follows from the Deligne-Serre lifting lemma (\cref{dslifting}). 

Suppose $m > 1$, and write $\beta = \beta(f)$. If we could show that $f \in S(N, R_m)$, then we would use \cref{comparedcweakI} to show that $f$ is weak and that $f \in S_{k'}(N, R_m)$ for some positive integer $k'$. In that case, after applying $[1+p]$ to $f$, we would end up with $(1+p)^{\beta} f = (1+p)^{k'} f$. This would give us $k' \equiv \beta \pmod{p^{\gamma(m)-1}}$, and we would be done. Thus, all we have to do is to show that $f \in S(N, R_m)$. 

The image of $f$ in $D(N, R_{m-1})$ is a $dc$-weak eigenform whose eigenvalue under the action of $t_\Lambda$ lies in $\Zmod{p}{\gamma(m-1)}$. Hence, by the inductive hypothesis, there exists a positive integer $k \equiv \beta \pmod{p^{\gamma(m-1)-1}}$ such that the image of $f$ in $D(N, R_{m-1})$ lies in $S_k(N, R_{m-1})$. Using the canonical surjective map $\surjmor{S_k(N, R_m)}{S_k(N, R_{m-1})}$ (\cite{Ru17}) and \cref{dcbasechange}, we can find $f_{m-1} \in S_k(N, R_{m})$ such that 
\[ f \equiv f_{m-1} \pmod{\pi_K^{m-1}D(N, R_m)}. \]
Write $k = \beta + tp^{\gamma(m-1)-1}$. Let
\[ h = \begin{cases} 4 \indent \mbox{if } p = 3, \\ p-1 \indent \mbox{if } p \geq 5 \end{cases}  \]
and consider the Eisenstein series $E_h$ of level $1$ and weight $h$. Then $p \nmid h$ and we can find a positive integer $a$ such that \[ha \equiv -t \pmod{p^{\gamma(m)-\gamma(m-1)}}.\] Furthermore, $E_h^{p^{\gamma(m-1)-1}}\equiv 1 \pmod{p^{\gamma(m-1)}}$ (see the top of this section). Hence
\[ f - f_{m-1}E_h^{ap^{\gamma(m-1)-1}} \equiv 0 \pmod{\pi_K^{m-1}D(N, R_m)}.  \]Thus there exists $g \in D(N, R_m)$ such that 
\[ f - f_{m-1}E_h^{ap^{\gamma(m-1)-1}} = \pi_K^{m-1} g.  \] 
The element $f_{m-1}E_h^{ap^{\gamma(m-1)-1}}$ has weight $k + hap^{\gamma(m-1)-1} \equiv \beta \pmod{p^{\gamma(m)-1}}$. Therefore, after applying $[1+p]$ to both sides, we get
\[ (1+p)^{\beta} f - (1+p)^{\beta}f_{m-1}E_h^{ap^{\gamma(m-1)-1}} = \pi_K^{m-1} [1+p] g   \]
and so $\pi_K^{m-1} \delta = 0$ 
where \[\delta := (1+p)^{\beta} g - [1+p]g.\]
%By \cref{dcbasechange}, we have a short exact sequence
%\[ 0 \rightarrow \pi_K D(N, R_m) \rightarrow D(N, R_m) \rightarrow D(N, \F[K]) \rightarrow 0.   \]
Let $\bar\delta$ and $\bar{g}$ be, respectively, the images of $\delta$ and $g$ in $D(N, \F[K])$. Since $\pi_K^{m-1} a_n(\delta) = 0$ for all $n \geq 1$, we have $a_n(\delta) \in \pi_K R_m$ and therefore $a_n(\bar\delta) = 0$ for all $n \geq 1$. Consequently, $\bar\delta = 0$. On the other hand, \[\bar\delta = (1+p)^\beta \bar g - [1+p] \bar g = \bar g - [1+p]\bar g.\] 
Thus $[1+p]\bar{g} = \bar{g}$. By \cref{katzdcmodp}, this implies that $\bar{g} \in S(N, \F[K])$. Using the canonical surjective map $\surjmor{S(N, R_m)}{S(N, \F[K])}$ (\cite{Ru17}), we can lift $\bar{g}$ to an element $g' \in S(N, R_m)$. The elements $g$ and $g'$ satisfy
\[ g - g' \in \pi_KD(N, R_m)  \] and therefore
\[ \pi_K^{m-1}g = \pi_K^{m-1} g'.  \]
This means that $f \in S(N, R_m)$, which is what we needed.
\end{proof}

As a particular application of \cref{dcweakisweak}, we find that for $p \geq 3$ every $dc$-weak eigenform in $D(N, \Zmod{p}{m})$ is weak since the corresponding $t_\Lambda$ eigenvalue must lie in $\Zmod{p}{m}$. It is very telling that the proof of \cref{dcweakisweak} fails when $p = 2$. As we will see, this is because non-weak $dc$-weak eigenforms with coefficients in $\Zmod{2}{m}$ actually exist. The idea for the next proposition and the definition of the element $d$ come from Key Lemma 2.5 in \cite{Ka75}.
\begin{proposition}\label{dcweakmod4} Let $f \in D(\Zmod{4}{})$ be a $dc$-weak eigenform. Then either $f$ is weak or 
\[ f = f_0 + 2d\Delta \] 
for some $f_0 \in S(\Zmod{4}{})$ such that $f_0 \equiv \Delta \pmod{2D(\Zmod{4}{})}$ and $d$ is the image in $D(\Zmod{4}{})$ of
\[ \frac{E_4 - 1}{16}. \]
%where $E_4$ is the weight 4 Eisenstein series of level 1 with $a_0(E_4) = 1$. 
\end{proposition}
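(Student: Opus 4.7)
The plan is to leverage Hatada's mod-$2$ result to reduce to a perturbation problem, then use the action of $t_\Lambda = [5]$ to pin down the perturbation. By the Deligne-Serre lifting lemma (\cref{dslifting}) together with \cref{hatada2}, any $dc$-weak eigenform $f \in D(\Zmod{4}{})$ satisfies $f \equiv \Delta \pmod{2 D(\Zmod{4}{})}$, and I would write $f = \Delta + 2h$ where $h$ is well-defined as an element of $D(\Zmod{2}{})$ (i.e., modulo the $2$-torsion in $D(\Zmod{4}{})$).

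Next, apply $t_\Lambda$ to this decomposition. Since $5^{12} \equiv 1 \pmod 4$, $\Delta$ is $t_\Lambda$-invariant mod $4$; letting $\lambda \in \Zmod{4}{}$ be the eigenvalue of $f$ for $t_\Lambda$, the identity $t_\Lambda f = \lambda f$ becomes
\[ (1-\lambda)\Delta \equiv 2(\lambda h - t_\Lambda h) \pmod 4. \]
Reducing mod $2$ forces $\lambda \equiv 1 \pmod 2$ (because $\Delta \not\equiv 0 \pmod 2$). Writing $\lambda = 1 + 2\mu$ with $\mu \in \{0, 1\}$ and dividing by $2$ yields
\[ (t_\Lambda - 1)h \equiv \mu \Delta \pmod 2 \]
in $D(\Zmod{2}{})$.

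The next step is to verify the key identity $(t_\Lambda - 1)(d\Delta) \equiv \Delta \pmod 2$. Working with $d\Delta = (\Delta E_4 - \Delta)/16 \in D(\Z[2])$ (which lies in $D$ since $16 \cdot d\Delta = \Delta E_4 - \Delta \in S_{\leq 16}$) and applying $[5]$ weight-wise to the classical cuspforms $\Delta E_4$ (weight $16$) and $\Delta$ (weight $12$), one computes $t_\Lambda(d\Delta) = (5^{16}\Delta E_4 - 5^{12}\Delta)/16$. The $2$-adic valuations $v_2(5^{12}-1) = 4$ and $v_2(5^{16}-1) = 6$ (by lifting-the-exponent) give $(5^{12}-1)/16 \equiv 1 \pmod 4$ and $(5^{16}-1)/16 \equiv 0 \pmod 4$, so $t_\Lambda(d\Delta) \equiv d\Delta - \Delta \pmod 4$ in $D(\Zmod{4}{})$, and the claimed identity follows by reducing mod $2$. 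Consequently, when $\mu = 0$ the element $h$ is $t_\Lambda$-invariant mod $2$, and when $\mu = 1$ the difference $h - d\Delta$ is.

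To conclude, I invoke \cref{katzdcmodp}: for $N = 1$ and $p = 2$ the action of $\units{\Z[2]}$ on $D(1, \Zmod{2}{})$ factors through $\units{\Z[2]}/\{\pm 1\} = 1 + 4\Z[2] = \overline{\langle 5 \rangle}$, so $t_\Lambda$-invariance coincides with $(1+2\Z[2])$-invariance and forces the invariant element to lie in $S(\Zmod{2}{})$. Lifting along the canonical surjection $S(\Zmod{4}{}) \twoheadrightarrow S(\Zmod{2}{})$ then handles both cases: when $\mu = 0$ one obtains $f \in S(\Zmod{4}{})$ directly (so $f$ is weak), and when $\mu = 1$ one lifts $h - d\Delta$ to $\tilde{h}_1 \in S(\Zmod{4}{})$ and obtains $f = (\Delta + 2\tilde{h}_1) + 2 d\Delta = f_0 + 2d\Delta$ with $f_0 := \Delta + 2\tilde{h}_1 \in S(\Zmod{4}{})$ automatically satisfying $f_0 \equiv \Delta \pmod{2D(\Zmod{4}{})}$. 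The main delicate point is the computation $(t_\Lambda - 1)(d\Delta) \equiv \Delta \pmod 2$, which hinges on the precise $2$-adic valuations $v_2(5^{12}-1)$ and $v_2(5^{16}-1)$ together with a careful treatment of the mixed-weight divided congruence $d\Delta$; the remaining ingredients—Katz's theorem in the $p = 2$, $N = 1$ setting and the lifting of mod-$2$ forms to mod-$4$ forms—are essentially bookkeeping.
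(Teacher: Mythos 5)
Your proof is correct, and it reaches the conclusion by a genuinely different (and more elementary) route than the paper's. Both arguments start the same way: use the Deligne--Serre lifting lemma plus Hatada's theorem to write $f = \Delta + 2h$, observe $\lambda \equiv 1 \pmod 2$, and reduce the whole problem to a mod-$2$ statement about the well-defined image $\bar h \in D(\F[2])$. The divergence is in how the mod-$2$ problem is solved. The paper works inside the $\F[2]$-algebra $V$ generated by $D(\F[2])$, identifies $A := V^{t_\Lambda}$ with the algebra of (classical) mod-$2$ forms via Katz's theorem, and invokes the structural fact that $B := V^{t_\Lambda^2}$ is a rank-$2$ \'etale $\Zmod{2}{}$-Galois $A$-algebra; then it checks $\bar d - \bar d^2 = \bar\Delta$ and $t_\Lambda\bar d = \bar d + 1$, applies Artin--Schreier theory to get $B = A[\bar d]$, and deduces the shape of $\bar g$ from $\bar g \in B$. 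You instead derive the \emph{inhomogeneous linear equation} $(t_\Lambda - 1)\bar h = \mu\bar\Delta$ directly from $t_\Lambda f = \lambda f$, exhibit the particular solution $\bar d\bar\Delta$ to $(t_\Lambda - 1)x = \bar\Delta$ by a hands-on $2$-adic valuation computation on the mixed-weight element $d\Delta = (\Delta E_4 - \Delta)/16 \in D(\Z[2])$, and then observe that the homogeneous kernel is $D(\F[2])^{t_\Lambda} = S(\F[2])$ by \cref{katzdcmodp} (together with the remark that $[\pm1]$ acts trivially in level $1$). This sidesteps Artin--Schreier theory and the \'etaleness of $B/A$ entirely, at the modest cost of the explicit computation $v_2(5^{12}-1) = 4$, $v_2(5^{16}-1) = 6$ (for the argument only the parities of $(5^{12}-1)/16$ and $(5^{16}-1)/16$ matter, so the mod-$4$ refinement in your write-up is unnecessary). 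The paper's route is more structural and explains \emph{why} the answer is governed by a single Artin--Schreier generator; your route is shorter and self-contained, and the lifting of $\bar h$ (resp.\ $\bar h - \bar d\bar\Delta$) along the surjection $S(\Zmod{4}{}) \twoheadrightarrow S(\F[2])$ at the end is carried out correctly.
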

\begin{proof} By the Deligne-Serre lifting lemma and \cref{hatada2}, $f \equiv \Delta \pmod{2D(\Zmod{4}{})}$. So there exists $g \in D(\Zmod{4}{})$ such that $f = \Delta + 2g$. Let $\lambda \in \Zmod{4}{}$ such that $t_\Lambda f = \lambda f$. Then $\lambda \equiv 1 \pmod{2}$, which means that either $\lambda = 1$ or $\lambda = -1$. In any case, we have $t_\Lambda^2 f = f$. Since $t_\Lambda \Delta = \Delta$, we get $2t_\Lambda^2 g = 2g$.

Let $\bar{g}$ and $\bar{d}$, respectively, be the images of $g$ and $d$ in $D(\F[2])$. Note that $t_\Lambda^2 \bar{g} = \bar{g}$. Let $A$ and $V$, respectively, be the $\F[2]$-subalgebras of $\F[2]\dbr{q}$ generated by $S(\F[2])$ and $D(\F[2])$. Then $A \subset V$, and the action of $t_\Lambda$ extends to $V$. By \cref{katzdcmodp}, $V^{t_\Lambda} = A$. 

Let $B = V^{t_\Lambda^2}$. Then $A \subset B$, and $B$ is a finite \'etale $A$-algebra of rank $2$, which is Galois with group $\Zmod{2}{}$ (see  \cite{Ka75a}, \S X and \cite{Ka75}, (2.4)). We can easily check that 
\[ \bar{d} \in B, \]
\[t_\Lambda \bar{d} = \bar{d}+ 1,   \] 
\[\bar{d} - \bar{d}^2 = \bar{\Delta} \in A    \]
where $\bar{\Delta}$ is the image of $\Delta$ in $A$. Using Artin-Schreier theory (c.f.\ \cite{Ka75}, (2.4)), we find that $B = A[\bar{d}]$. Since $\bar{g} \in B$, we find that either $f \in S(\Zmod{4}{})$ (which corresponds to $\lambda = 1$) or $f = f_0 + 2df_1$ for some $f_0, f_1 \in S(\Zmod{4}{})$ (which corresponds to $\lambda = -1$). If it is the latter case, we apply $t_\Lambda$ to $f$ and get
\[ t_\Lambda f - f = f_0 + 2(d+1)f_1 - f_0 - 2df_1 = 2f_1, \]
hence $2f_1 = 2\Delta$ and therefore $f = f_0 + 2d\Delta$.     
\end{proof}

%%%%%%%%%%%%%%%%%%%%%%%%%%%%%%%%%%%%%%%%%%%%

\section{$dc$-weak eigenforms over $\Zmod{4}{}$}\label{section:wefmod4}
In this section, we will calculate all $dc$-weak eigenforms with coefficients in $\Zmod{4}{}$. Recall that the graded algebra $M(\Z)$ of modular forms of level $1$ with coefficients in $\Z$ has the presentation (\cite{De75})
\[M(\Z) = \Z[][E_4, E_6, \Delta] / ( E_4^3 - E_6^2 - 1728\Delta).   \]
As $E_4 \equiv E_6 \equiv 1 \pmod{4}$, every element $f \in S(\Zmod{4}{})$ can be written uniquely as
\[ f = F \in \left(\Zmod{4}{}\right)[\Delta]  \]
It makes sense to write $\deg f$ for the degree of $f$ as a polynomial in $\Delta$. Note that knowing $\deg f$ is equivalent to knowing the weight in which $f$ occurs in $S(\Zmod{9}{})$. 

Let $\mathcal{T} = \{T_3, T_5, U\}$. We can reduce the computation of weak eigenforms to a finite process due to the following result.
\begin{proposition}\label{weightboundmod4} Let $f \in S(\Zmod{4}{})$ and suppose that
\[ \max\{\deg T f : T \in \sh{T}  \} \leq 1.   \] Then $\deg f \leq 5$. 
\end{proposition}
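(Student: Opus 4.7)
The plan is to translate the proposition into an explicit finite linear algebra computation over $\Zmod{4}{}$. Using the identification $S(\Zmod{4}{}) \cong (\Zmod{4}{})[\Delta]$, I write $f = \sum_{i=1}^{n} a_i \Delta^i$ with $a_i \in \Zmod{4}{}$ and observe that each operator in $\sh{T}$ preserves the $\Delta$-degree filtration. Indeed $T_\ell$ preserves weight, so $T_\ell \Delta^i \in S_{12i}(\Zmod{4}{}) \subset (\Zmod{4}{})[\Delta]_{\leq i}$; and the same holds for $U$ since $U \Delta^i \equiv T_2 \Delta^i \pmod{4}$ for $i \geq 1$ (because $2^{12i-1} \equiv 0 \pmod{4}$). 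This gives upper-triangular matrices $M_T = (c^T_{j,i})_{j \leq i}$ defined by $T \Delta^i = \sum_{j \leq i} c^T_{j,i} \Delta^j$.

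First I would compute $M_T$ for $T \in \sh{T}$ up to some size $N$ sufficient for the claim (e.g.\ $N = 8$). Each entry can be extracted by matching $q$-coefficients: the identity
\[ [q^j]\, T\Delta^i = \sum_{j' \leq j} c^T_{j',\,i} \,\tau_{j'}(j), \qquad \tau_{j'}(j) := [q^j]\Delta^{j'}, \]
is lower-triangular in $j$ (with $\tau_j(j) = 1$) and solvable recursively. The input data are the Fourier coefficients $\tau_i(r) = [q^r]\Delta^i$, computable by the convolution $\tau_i = \tau \ast \tau_{i-1}$, together with the formulas $[q^m]\, T_\ell \Delta^i = \tau_i(m\ell) + \ell^{12i-1}\tau_i(m/\ell)$ and $[q^m]\, U\Delta^i \equiv \tau_i(2m) \pmod{4}$.

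The proposition then amounts to the following linear algebra claim: the system
\[ \sum_{i \geq j} a_i\, c^T_{j,\,i} = 0 \qquad (j \geq 2,\; T \in \sh{T}) \]
forces $a_i = 0$ for all $i \geq 6$. Since this system is triangular in $j$ (from $j = n$ downward), one obtains successive constraints $a_n c^T_{n,n} = 0$, then $a_{n-1} c^T_{n-1,n-1} + a_n c^T_{n-1,n} = 0$, and so on. For each $n \geq 6$, I would check that the combined constraints coming from the three operators $T \in \sh{T}$ force $a_n = 0$, and then iterate downward to $a_6$. This is a finite verification once the matrices $M_{T_3}, M_{T_5}, M_U$ are tabulated modulo $4$.

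The main obstacle is the presence of zero-divisors in $\Zmod{4}{}$: the diagonal entries $c^T_{n,n}$ may fail to be units (indeed for several small $n$ they vanish mod $4$), so one cannot conclude $a_n = 0$ from a single equation $a_n c^T_{n,n} = 0$. One must exploit the off-diagonal structure of the $M_T$ together with the joint effect of all three operators. A cleaner alternative, which I would explore in parallel, is to stratify by the reduction $\bar f \in \F_2[\Delta]$ and invoke Nicolas--Serre theory \cite{NS12}: the mod $2$ behaviour of $T_3, T_5$ on $\F_2[\Delta]$ gives a direct weight/degree bound on $\bar f$, and a standard lifting argument then promotes this to the needed bound on $f$ modulo $4$.
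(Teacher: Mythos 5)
Your primary approach has a genuine gap. You set up the correct linear-algebra framework (the Hecke operators are upper-triangular for the $\Delta$-degree filtration, so the hypothesis yields a triangular linear system in the $a_i$), but you then assert that the system can be checked ``for each $n \geq 6$'' and call this a ``finite verification once the matrices are tabulated.'' It is not finite: $n$ is unbounded, and you give no mechanism --- periodicity, a recursion, a structural invariant --- that reduces the infinitely many cases to finitely many. Worse, as you yourself observe, the diagonal entries $c^T_{n,n}$ can all vanish mod $4$ (indeed they are forced to be $\equiv 0 \pmod 2$ for all $T\in\sh T$ by \cref{hatada2}), so the top row of the triangular system is vacuous and you get no a priori handle on $a_n$. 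The phrase ``exploit the off-diagonal structure of the $M_T$ together with the joint effect of all three operators'' is precisely the part of the argument that needs content, and as written the proposal supplies none. There is no reason, from brute linear algebra alone, that the constraints from $T_3$, $T_5$, $U$ close up at degree $5$; that they do is a genuinely non-trivial structural fact.

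The structure that makes this work is exactly what your ``cleaner alternative'' invokes: Nicolas--Serre theory. In $\F_2[\Delta]$ one has the basis $m(a,b)$ on which $T_3$ and $T_5$ act as decrementations of the two indices, and the ``nilpotence filtration'' controls the $\Delta$-degree; combining this with the action of $U$ (Frobenius-splitting the even and odd $\Delta$-powers, as in the mod $9$ argument of \cref{weightboundmod9}) gives the uniform degree bound. This is in fact the paper's proof: it cites \cite{KRW16} (Proposition~15 and the proof of Theorem~13), which rest on \cite{NS12}. So your second route is the correct one, but in the proposal it is left as a one-line sketch with the lifting step called ``standard,'' whereas your first route --- the one you actually develop --- would not succeed without importing that structural input.
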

\begin{proof} This follows from the results of \cite{KRW16} (see Proposition 15 and the proof of Theorem 13 of that paper), which in turn rely on Nicolas-Serre theory (\cite{NS12}). 
\end{proof}

\begin{lemma}\label{heckeond}
We have $T(2d\Delta) = 2\Delta$ for all $T \in \sh{T}.$
\end{lemma}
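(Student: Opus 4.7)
My plan is to exploit that the divided congruence $d\Delta = (E_4\Delta - \Delta)/16$ sits inside the weight filtration: both $E_4\Delta$ (the unique normalised cusp form of weight $16$, hence an eigenform; call it $\Delta_{16}$) and $\Delta$ are eigenforms with known eigenvalues $a_\ell(\Delta_{16})$ and $\tau(\ell)$. Since $T_\ell$ is $\Z[2]$-linear on $D(\Z[2])$ and preserves weights, for odd $\ell$
\[ T_\ell(d\Delta) = \frac{a_\ell(\Delta_{16})\,\Delta_{16} - \tau(\ell)\,\Delta}{16}. \]
Using $E_4 \equiv 1 \pmod{16}$ (which holds as $240 = 16 \cdot 15$), we have $\Delta_{16} = \Delta + 16\, d\Delta$, whence
\[ T_\ell(d\Delta) = \frac{a_\ell(\Delta_{16}) - \tau(\ell)}{16}\,\Delta + a_\ell(\Delta_{16})\, d\Delta. \]
For $\ell \in \{3, 5\}$ I then plan to substitute the explicit eigenvalues ($\tau(3) = 252$, $a_3(\Delta_{16}) = -3348$, $\tau(5) = 4830$, $a_5(\Delta_{16}) = 52110$) and verify that $(a_\ell(\Delta_{16}) - \tau(\ell))/16$ is an odd integer while $a_\ell(\Delta_{16})$ is even (the latter also being Hatada's mod $2$ congruence). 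Reducing in $D(\F[2])$, only the $\Delta$-term survives, so $T_\ell(d\Delta) \equiv \Delta \pmod 2$ and doubling gives $T_\ell(2d\Delta) \equiv 2\Delta \pmod 4$ in $D(\Zmod{4}{})$.

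For $U$ I will use the standard relation $U = T_2 - 2^{k-1}V$ on weight-$k$ level-$1$ forms, where $(Vf)(q) = f(q^2)$. This yields $U\Delta = -24\Delta - 2^{11} V\Delta$ (using $\tau(2) = -24$) and $U\Delta_{16} = 216\,\Delta_{16} - 2^{15}V\Delta_{16}$ (using $a_2(\Delta_{16}) = 216$). Expanding $16\, U(d\Delta) = U\Delta_{16} - U\Delta$ via $\Delta_{16} = \Delta + 16\,d\Delta$ and $V\Delta_{16} = V\Delta + 16\,V(d\Delta)$ gives
\[ U(d\Delta) = 15\,\Delta + 216\, d\Delta - 1920\, V\Delta - 32768\, V(d\Delta), \]
whose only odd coefficient is the leading $15$. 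Hence $U(d\Delta) \equiv \Delta \pmod 2$ and $U(2d\Delta) \equiv 2\Delta \pmod 4$.

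The only step requiring verification beyond routine $q$-expansion arithmetic is the integrality of $(a_\ell(\Delta_{16}) - \tau(\ell))/16$ for $\ell = 3, 5$, a congruence slightly stronger than Hatada's mod-$8$ result, which I confirm numerically rather than structurally (and which is forced anyway by $T_\ell$ preserving $D(\Z[2])$). This is not a serious obstruction, but it is the only place where arithmetic specific to $\Delta_{16}$ enters, which is precisely why the statement is limited to the three operators $T_3, T_5, U$ that \cref{weightboundmod4} requires, rather than formulated for arbitrary Hecke operators.
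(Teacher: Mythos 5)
Your proposal is correct and matches the paper's approach: the paper's one-line proof says ``We check this by explicit computation using the fact that $E_4\Delta$ is an eigenform in characteristic $0$,'' which is exactly the computation you carry out, writing $d\Delta=(E_4\Delta-\Delta)/16$, applying $T_\ell$ (resp. $U$ via $T_2 - 2^{k-1}V$) term by term using the eigenvalues of $\Delta$ and $E_4\Delta$, and reducing modulo $2$. The arithmetic checks out ($-3600/16=-225$, $47280/16=2955$, and the $U$-expansion coefficients $15,216,-1920,-32768$ are as you state), so your write-up simply fills in the explicit computation the paper leaves implicit.
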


\begin{proof} We check this by explicit computation using the fact that $E_4 \Delta$ is an eigenform in characteristic $0$.
\end{proof}

\begin{proposition}\label{weightboundefmod4}The $dc$-weak eigenforms in $f \in D(\Zmod{4}{})$ are of the form
\[  f = f_0, \indent f_0 \in \left(\Zmod{4}{}\right)[\Delta], \indent \deg f_0 \leq 5, \]
or
\[   f = f_0 + 2d\Delta, \indent f_0 \in \left(\Zmod{4}{}\right)[\Delta],  \indent \deg f_0 \leq 5.  \]

\end{proposition}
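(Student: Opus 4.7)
The plan is to dispatch the two alternatives provided by \cref{dcweakmod4} using the weight bound \cref{weightboundmod4}, exploiting that any $dc$-weak eigenform $f \in D(\Zmod{4}{})$ satisfies $f \equiv \Delta \pmod{2D(\Zmod{4}{})}$ by the Deligne--Serre lifting lemma and \cref{hatada2}. Since the Hecke eigenvalues of $\Delta$ at the operators in $\mathcal{T}$ are all even ($a_2(\Delta) = -24$, $a_3(\Delta) = 252$, $a_5(\Delta) = 4830$), the eigenvalue $a(T) \in \Zmod{4}{}$ of $f$ under any $T \in \mathcal{T}$ must lie in $\{0, 2\}$. The goal in both cases is then to establish $\deg Tf_0 \leq 1$ for every $T \in \mathcal{T}$ and appeal to \cref{weightboundmod4} to conclude $\deg f_0 \leq 5$.

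If $f = f_0 \in S(\Zmod{4}{})$ is weak, then $Tf_0 = a(T)f_0$ directly. When $a(T)=0$ the product vanishes, and when $a(T)=2$ the congruence $f_0 \equiv \Delta \pmod 2$ collapses $2f_0$ to $2\Delta$ in $(\Zmod{4}{})[\Delta]$, giving $\deg Tf_0 = 1$. In either subcase, $\deg Tf_0 \leq 1$, and \cref{weightboundmod4} produces the desired bound $\deg f_0 \leq 5$.

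In the non-weak case $f = f_0 + 2d\Delta$, I would use \cref{heckeond} to compute
\[ Tf_0 = Tf - T(2d\Delta) = a(T)(f_0 + 2d\Delta) - 2\Delta = a(T)f_0 + 2a(T)d\Delta - 2\Delta. \]
Because $a(T)$ is even, the middle term vanishes modulo $4$, leaving $Tf_0 = a(T)f_0 - 2\Delta$. When $a(T)=0$, this equals $-2\Delta$ of degree $1$; when $a(T)=2$, it becomes $2(f_0 - \Delta) = 0$ since $f_0 - \Delta$ is divisible by $2$ in $S(\Zmod{4}{})$. Thus $\deg Tf_0 \leq 1$, and \cref{weightboundmod4} again yields $\deg f_0 \leq 5$. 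The subtle point---which is where the argument might fail to generalise---is precisely this non-weak case: $f_0$ is not itself an eigenform, but the ``obstruction'' $2d\Delta$ is always transported into the one-dimensional subspace $\Zmod{4}{}\cdot\Delta$ by \cref{heckeond}, and the evenness of $a(T)$ kills the residual $d$-term, allowing the weight bound to apply to $f_0$ directly.
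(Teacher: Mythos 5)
Your proof is correct and follows essentially the same route as the paper: use \cref{dcweakmod4} to split into the weak and non-weak cases, observe that the eigenvalues $a(T)$ for $T\in\mathcal{T}$ are even because $f\equiv\Delta\pmod{2}$, invoke \cref{heckeond} to handle the $2d\Delta$ term, deduce $\deg Tf_0\leq 1$, and conclude via \cref{weightboundmod4}. The only difference is presentational: the paper compresses the case analysis by noting directly that $Tf=\lambda_T\Delta$ (since $\lambda_T$ even and $f\equiv\Delta\pmod 2$ force $\lambda_T f=\lambda_T\Delta$) and then $Tf_0\in\{\lambda_T\Delta,\lambda_T\Delta-2\Delta\}$, whereas you spell out all four subcases explicitly — both are valid and lead to the same bound.
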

\begin{proof} Let $f \in D(\Zmod{4}{})$ be a $dc$-weak eigenform. By \cref{dcweakmod4}, we can write
\[ f = f_0 \indent \mbox{ or }\indent f = f_0 + 2d\Delta  \] where $f_0 \in \left(\Zmod{4}{}\right)[\Delta]$ is such that $f_0 \equiv \Delta \pmod{2D(\Zmod{4}{})}$. In any case, we have $a_2(f) \equiv a_3(f) \equiv a_5(f) \equiv 0 \pmod{2}$. Thus for each $T \in \sh{T}$ there exists $\lambda_T \in 2\Zmod{4}{}$ such that
$Tf = \lambda_T \Delta$.
On the other hand, we have
\[ Tf = Tf_0 \indent \mbox{ or } \indent Tf = Tf_0 + 2\Delta \]
for each $T \in \sh{T}$ by \cref{heckeond}. This gives us the bound
\[ \max\{\deg T f_0 : T \in \sh{T} \} \leq 1.   \]
By \cref{weightboundmod4}, we get that $\deg f_0 \leq 5$. 
\end{proof}

\begin{proposition}\label{allwefmod4}The set map
\[ \{\mbox{$dc$-weak eigenforms in }D(\Zmod{4}{})\} \rightarrow \left(2\Zmod{4}{} \right)^4,   \] 
\[ f \mapsto \left(\lambda-1, a_2(f), a_3(f), a_5(f) \right),\]
where $\lambda$ is such that $[1+4]f = \lambda f$, is bijective. 
\end{proposition}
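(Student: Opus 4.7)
The plan is to reduce the bijection to a finite-dimensional problem via \cref{weightboundefmod4}, split into a weak and a non-weak case using the $t_\Lambda$-eigenvalue, and then verify each case by an explicit $\F[2]$-linear computation.

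First I would pin down the first coordinate. Since $(1+4)^k \equiv 1 \pmod 4$ for every $k$, the operator $t_\Lambda = [1+4]$ acts as the identity on $S(\Zmod{4}{})$, so every weak dc-weak eigenform has $\lambda = 1$. For a non-weak $f = f_0 + 2d\Delta$ as in \cref{dcweakmod4}, the relation $t_\Lambda d = d + 1$ extracted from the proof of that proposition, combined with $t_\Lambda \Delta = \Delta$ (since $12 \equiv 0 \pmod 4$) and $f \equiv \Delta \pmod 2$, yields $t_\Lambda f = f + 2\Delta = -f$, so $\lambda = -1$. Thus $\lambda - 1 \in \{0,2\}$ determines whether $f$ is weak, and the map restricted to the weak (respectively non-weak) case lands in $\{0\} \times (2\Zmod{4}{})^3$ (respectively $\{2\} \times (2\Zmod{4}{})^3$). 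Bijectivity therefore reduces to bijectivity of each restricted map onto its $8$-element target.

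For both cases, \cref{weightboundefmod4} together with the Deligne-Serre lifting lemma (\cref{dslifting}) and \cref{hatada2} in residue characteristic $2$ allows me to write the weak part as $f_0 = \Delta + 2h$ with $h \in \F[2][\Delta]$ of degree at most $5$ — a $6$-dimensional $\F[2]$-space of candidates. In the weak case, reducing the eigenform condition $T_n f_0 \equiv a_n(f_0) f_0 \pmod 4$ modulo $2$ for $n \in \{2,3,5\}$ transforms into an affine $\F[2]$-linear system in the six coefficients of $h$: the matrix records the mod-$2$ action of $U, T_3, T_5$ on the monomials $\Delta, \Delta^2, \ldots, \Delta^5 \in \F[2][\Delta]$ (computable from Nicolas-Serre theory \cite{NS12} or directly from $q$-expansions of cuspforms of weight up to $60$), while the right-hand side encodes the prescribed triple $(a_2, a_3, a_5) \in (2\Zmod{4}{})^3$. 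In the non-weak case, one writes the analogous condition for $f_0 + 2d\Delta$; \cref{heckeond} shows $a_n(f_0 + 2d\Delta) = a_n(f_0) + 2$ for $n \in \{2,3,5\}$, and the condition on $h$ reduces to the same $\F[2]$-linear system composed with the involution $x \mapsto x + 2$ on the target.

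The main obstacle in both cases is therefore the same concrete $\F[2]$-linear algebra problem: show that the $3 \times 6$ system has rank $3$, and that as $h$ ranges over the $6$-dimensional space the resulting map $h \mapsto (a_2, a_3, a_5)$ is surjective onto $\F[2]^3$, with each fibre having the same cardinality $2^3 = 8$ as the kernel. This is a finite computation, most transparently organised with a computer algebra system: tabulate the matrices of $U, T_3, T_5$ on $\F[2][\Delta]_{\leq 5}$, check the rank and surjectivity, and read off the eight explicit solutions in each case. Together with the first-paragraph reduction, this yields the stated bijection onto $(2\Zmod{4}{})^4$.
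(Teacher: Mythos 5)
Your overall strategy—reduce to a finite list of candidates via \cref{weightboundefmod4} and \cref{dcweakmod4}, then verify by a finite computation—matches the paper's approach, which simply invokes a brute-force Sage search over all $f_0 \in (\Zmod{4}{})[\Delta]$ of degree at most $5$ with $f_0\equiv\Delta\pmod 2$, together with (in the non-weak case) the twist by $2d\Delta$, checking each candidate against the full eigenform condition up to a Sturm bound. Your first-paragraph computation of $\lambda$ (that $t_\Lambda$ acts trivially on $S(\Zmod{4}{})$ and that $t_\Lambda f = -f$ in the non-weak case) is correct and a welcome explicit supplement.

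However, the linear-algebra reformulation in the second half contains a genuine gap. You propose to solve a ``$3\times 6$ system'' whose fibres have $2^3=8$ elements, and conclude bijectivity from this. But a map $h\mapsto(a_2,a_3,a_5)$ with $8$-element fibres would yield $8$ candidates per target triple, not one; to obtain the stated bijection you must further argue that exactly one candidate in each fibre is actually a $dc$-weak eigenform, and nothing in your argument identifies it. The deeper issue is that the system as described encodes only the $q^2,q^3,q^5$-coefficient matching (a $3$-dimensional target), not the eigenform property $T_n f = a_n(f) f$ in $\Zmod{4}{}\dbr{q}$ for all $n$, which is a substantially stronger constraint. If instead the system is meant to encode the mod-$2$ conditions $T_n\bar h = a_n(\bar h)\bar\Delta$ for $n\in\{2,3,5\}$, these are polynomial identities in $\F[2][\Delta]$, not scalar equations, so the matrix is not $3\times 6$; and even after imposing them one must still justify that the conditions for $n\in\{2,3,5\}$ already force the condition for all $n$. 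That implication is exactly what \cref{heckealggen} gives—but the paper's proof of \cref{heckealggen} uses the present proposition, so citing it here would be circular. The paper sidesteps all of this by checking each of the finitely many candidates against $T_n f = a_n(f) f$ for $n$ up to the Sturm bound, which is logically self-contained. (A smaller point: cuspidality forces $h$ to live in the $5$-dimensional space spanned by $\Delta,\dots,\Delta^5$, not a $6$-dimensional one—your two descriptions of the candidate space are inconsistent.)
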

\begin{proof} Using \cref{weightboundefmod4}, a brute force search in Sage gives us all the $dc$-weak eigenforms of level $1$ with coefficients in $\Zmod{4}{}$ and allows us to prove the proposition. 
\end{proof}
\begin{remark} Given a particular element $f \in D(\Zmod{p}{m})$, we can check that it is an eigenform by checking that $T_nf = a_n(f)$ for all $n$ up to a certain bound. This bound can be derived from the classical Sturm bound (see \cite{Ki08}, Theorem 3.13) for modular forms. 
\end{remark}

\cref{allwefmod4} gives us several $dc$-weak eigenforms of level $1$ which are not weak, one example being $\Delta + 2d\Delta \in D(\Zmod{4}{})$. These are the first explicitly known examples of non-weak $dc$-weak eigenforms. 

%%%%%%%%%%%%%%%%%%%%%%%%%%%%%%%%%%%%%%%%%%%%

\section{$dc$-weak eigenforms over $\Zmod{9}{}$}\label{section:wefmod9}
In this section, we will calculate all $dc$-weak eigenforms with coefficients in $\Zmod{9}{}$ using the same sort of argument as was used in \cref{section:wefmod4}. By \cref{dcweakisweak}, all such $dc$-weak eigenforms are weak. 

As $E_4^3 \equiv E_6 \equiv 1 \pmod{9}$, every element $f \in S(\Zmod{9}{})$ can be written uniquely as
\[ f = F_0 + E_4 F_1 + E_4^2 F_2  \]
where $F_0, F_1, F_2 \in \left(\Zmod{9}{} \right)[\Delta]$. We write $\deg f := \max\{\deg F_i : i = 0,1,2\}$ where $\deg F_i$ is the degree of $F_i$ as a polynomial in $\Delta$. In particular, a weak eigenform in $S(\Zmod{9}{})$ must be of the form
\[ f = E_4^i F \]
for some $F \in \left(\Zmod{9}{}\right)[\Delta]$ and $i \in {0,1,2}$. Note that knowing $\deg f$ is equivalent to knowing the weight in which $f$ occurs in $S(\Zmod{9}{})$. 

Similarly, every element $\bar{g} \in S(\F[3])$ can be written as a polynomial in $\Delta$ with coefficients in $\F[3]$, and we can define $\deg \bar{g}$ to be the degree in $\Delta$ of the polynomial representing $\bar{g}$. 

Let $\mathcal{T} = \{T_2, 1 + T_7, U\}$.
\begin{proposition}\label{weightboundmod9} Let $\bar{g} \in S(\F[3])$. Suppose that
\[ \max\{\deg T\bar{g} : T \in \sh{T} \} \leq 1.   \]
Then $\deg \bar{g} \leq 10$. 
\end{proposition}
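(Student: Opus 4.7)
The plan is to mirror the proof of Proposition \ref{weightboundmod4}, with the role of Nicolas--Serre theory (which supplied \cite{KRW16}, Proposition~15 in the mod~$2$ setting via \cite{NS12}) played by the analogous description of the mod~$3$ Hecke algebra on level~$1$ forms of Bellaïche and Khare \cite{BK15}. Since $E_4 \equiv E_6 \equiv 1 \pmod{3}$, we may identify $S(\F[3])$ with $\Delta \cdot \F[3][\Delta]$, so that $\deg \bar{g}$ agrees with the degree of the corresponding polynomial in $\Delta$. By Hatada's theorem \ref{hatada3}, every level-$1$ mod-$3$ eigenform satisfies $a_2 \equiv 0$ and $a_7 \equiv 1 + 7 \equiv -1 \pmod{3}$, so each of $T_2$, $1 + T_7$, and $U$ lies in the unique maximal ideal of the mod~$3$ Hecke algebra; this is precisely why these operators are the ones appearing in $\sh{T}$, and it forces them to act on $\F[3][\Delta]$ with a strictly ascending degree filtration.

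First I would extract from \cite{BK15} explicit leading-term formulas for $T_2 \Delta^n$, $T_7 \Delta^n$, and $U \Delta^n$ in terms of polynomials in $\Delta$. Writing $\bar{g} = \sum_{n=1}^{d} c_n \Delta^n$ with $c_d \neq 0$, the hypothesis $\deg T \bar{g} \leq 1$ for every $T \in \sh{T}$ then becomes three systems of linear vanishing constraints on the coefficients $(c_n)$. I would then argue that for $d$ sufficiently large the three systems are jointly inconsistent: the leading term of each of $T_2 \bar{g}$, $(1+T_7)\bar{g}$, $U\bar{g}$ sits in a $\Delta$-degree too high to be canceled by the contributions coming from $c_1,\dots,c_{d-1}$, and combining all three operators is what rules out $d$ beyond $10$.

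The main obstacle is to make this leading-term analysis sharp enough to produce the specific bound $\deg \bar{g} \leq 10$ rather than some weaker polynomial estimate. This is a finite, in practice computer-assisted check: one reads off from \cite{BK15} the matrices of $T_2$, $1 + T_7$, and $U$ acting on $\bigoplus_{n=1}^{N} \F[3]\Delta^n$ for an explicit $N$ determined by the Bellaïche--Khare filtration, and then verifies directly that the joint preimage of $\F[3]\Delta$ under $(T_2,\, 1+T_7,\, U)$ is concentrated in degrees at most $10$. Obtaining the sharp bound will presumably require all three operators of $\sh{T}$ together and cannot be achieved from any two of them alone, just as in the mod~$2$ analogue the three operators $T_3$, $T_5$, and $U$ are needed jointly to produce the bound $\deg f \leq 5$.
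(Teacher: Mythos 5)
Your instinct to replace Nicolas--Serre with Bellaïche--Khare is right, and you are also right that the argument is structurally parallel to Proposition~\ref{weightboundmod4}, but as written the plan has two genuine gaps, one of which contains an error.

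First, the role of $U$. You assert that because $T_2$, $1+T_7$, $U$ all lie in the maximal ideal they ``act on $\F[3][\Delta]$ with a strictly ascending degree filtration''. That is false for $U$: in characteristic $3$ we have $U(\Delta^{3n}) = \Delta^n$ and $U(\Delta^i) = 0$ for $3\nmid i$, so $U$ \emph{lowers} degree (by roughly a factor of $3$) rather than raising it. Treating $U$ as a generic filtration-raising operator and feeding it into a linear system on coefficients will not yield anything useful. The paper handles $U$ by a separate structural step: split $\bar g = \bar g_o + \bar g_e^3$ into the part supported on exponents prime to $3$ and the part that is a perfect cube, observe $U(\bar g_o)=0$ and $U(\bar g_e^3)=\bar g_e$, so the hypothesis $\deg U\bar g \le 1$ immediately bounds $\deg \bar g_e \le 1$; and $T_2$, $1+T_7$ kill $\bar g_e^3$ outright because they commute mod~$3$ with cubing and annihilate $\Delta$. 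Your scheme has no analogue of this decomposition and cannot recover it from a generic ``leading-term'' analysis.

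Second, the finitization. You say you would read off matrices of $T_2$, $1+T_7$, $U$ on $\bigoplus_{n\le N}\F[3]\Delta^n$ and check that the joint preimage of $\F[3]\Delta$ sits in degree $\le 10$, with $N$ ``determined by the Bellaïche--Khare filtration''. But the whole point of the proposition is to bound degree in an a priori infinite-dimensional space; a matrix check on a truncation is not a proof unless you already know why large degrees cannot contribute. The tool that makes this rigorous in \cite{BK15} is not a leading-term estimate but the explicit dual basis $\{m(a,b)\}_{a,b\ge 0}$ of Corollary~25: $T_2$ and $1+T_7$ act as shift operators $m(a,b)\mapsto m(a-1,b)$ and $m(a,b)\mapsto m(a,b-1)$, with $m(0,0)=\Delta$, $m(1,0)=\Delta^2$, $m(0,1)=\Delta^7+2\Delta^{10}$. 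Expanding $\bar g_o = \sum \alpha_{a,b} m(a,b)$, the hypothesis $\deg T_2\bar g_o\le 1$ kills every $\alpha_{a,b}$ with $a\ge 1$ except $\alpha_{1,0}$, and then $\deg (1+T_7)\bar g_o\le 1$ kills $\alpha_{0,b}$ for $b\ge 2$, leaving only $\alpha_{0,0}\Delta + \alpha_{1,0}m(1,0) + \alpha_{0,1}m(0,1)$, whence $\deg \bar g_o \le 10$. Combined with $\deg \bar g_e^3 \le 3$ this gives the bound. You are gesturing at this structure without naming it; without the shift-operator property the linear-algebra step you describe is not well-posed and the sharp constant $10$ (which comes from $\deg m(0,1)=10$) has no source.

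So: replace the vague ``leading-term formulas'' with the $m(a,b)$ basis and its shift relations, and add the $\bar g_o + \bar g_e^3$ splitting to handle $U$ correctly, and the plan becomes the paper's proof.
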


\begin{proof}In terms of the concepts described in \cite{Ru17}, this is a bound on the nilpotence filtration of $\bar{g}$, and we can turn it into a bound on the weight filtration (i.e.\ $\deg \bar{g}$). In the following, we describe how to do that explicitly. The argument is similar to the one used in \cite{KRW16}.

Since we are in characteristic $3$, we can split $\bar{g}$ into two parts 
\[ \bar{g} = \sum_i c_i \Delta^i = \bar{g}_o + \bar{g}_e^3  \]
where $\bar{g}_o = \sum_{i \not \equiv 0 \pmod{3}} c_i \Delta^i$. Then 
\[ U(\bar{g}_o) = 0,\indent U(\bar{g}_e^3) = \bar{g}_e,   \]
and so $\deg \bar{g}_e \leq 1$. Additionally, the operators $T_2$ and $1+T_7$ commute (modulo $3$) with the operator $\Delta \mapsto \Delta^3$. Because $T_2(\Delta) \equiv (1+T_7)(\Delta) \equiv 0 \pmod{3}$, we get $T_2(\bar{g}_e^3) = (1+T_7)(\bar{g}_e^3) = 0$.

To deal with $\bar{g}_o$, we use results from \cite{BK15}. Let $S_o$ be the subspace of $S(\F[3])$ consisting of those polynomials $\sum_i d_i \Delta^i $ such that $d_i = 0$ whenever $i \equiv 0 \pmod{3}$. Then $\bar{g}_o \in S_o$. By Corollary 25 of \cite{BK15}, there exists a unique basis $\{m(a,b)\}_{a,b \in \Z[\geq 0]}$ for $S_o$ such that
\begin{enumerate}[(i)]
\item $m(0,0) = \Delta$,
\item $T_2 m(a,b) = m(a-1, b)$ if $a \geq 1$ and $T_2 m(0,b)= 0$,
\item $(1+T_7)m(a,b) = m(a,b-1)$ if $b \geq 1$ and $(1+T_7)m(a,0) = 0$,
\item $a_1(m(a,b)) = 0$ unless $(a,b) = (0,0)$. 
\end{enumerate}
The first few elements of this basis are given in Example 26 of \cite{BK15}. The ones we will need are
\[ m(0,1) = \Delta^7 + 2\Delta^{10},  \]
\[ m(1,0) = \Delta^2. \]

Write $\bar{g}_o = \sum_{a,b \in \Z[\geq 0]} \alpha_{a,b} m(a,b)$. First apply $T_2$. The properties of the basis $m(a,b)$ then tell us that
\[\deg T_2 \bar{g}_o \leq 1 \Rightarrow \{ \alpha_{a,b}: a \geq 1 \mbox{ and } (a,b)\not = (1,0)  \} = \{0 \}.   \]
Thus $\bar{g}_o = \alpha_{1,0} m(1,0) + \sum_{b\geq 0} \alpha_{0,b}m(0, b)$. Now apply $1 + T_7$. Then, similarly, we have 
\[ \deg(1+T_7)(\bar{g}_o)\leq 1 \Rightarrow \{ \alpha_{0, b} : b\geq 2\} = \{0\}.  \]
We deduce that $\bar{g}_o = \alpha_{0,0}\Delta + \alpha_{1,0}m(1,0) + \alpha_{0,1}m(0,1)$ and therefore $\deg \bar{g}_o \leq 10$. Combining all the above bounds together, we get $\deg \bar{g} \leq 10$. 
\end{proof}

\begin{remark} Medvedovsky has calculated\footnote{\url{https://www.math.brown.edu/~medved/Mathdata/ModFormsMod3/mab3upto17.txt}} all the elements of the basis $m(a,b)$ for $a+b \leq 17$. 
\end{remark}

\begin{proposition}\label{weightboundefmod9} Let $f \in S(\Zmod{9}{})$ be a weak eigenform. Then $\deg f \leq 10$. 
\end{proposition}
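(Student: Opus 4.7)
The plan is to mirror the mod-$4$ argument used in the proof of \cref{weightboundefmod4}: first pin down the principal mod-$3$ part of $f$ via Hatada's congruences, then bootstrap a ``second-order correction'' via \cref{weightboundmod9}. Let $f \in S(\Zmod{9}{})$ be a weak eigenform. First, reduce modulo $3$: the image $\bar f \in S(\F[3])$ is a weak eigenform, hence strong by the Deligne-Serre lifting lemma (\cref{dslifting}), so it is the reduction of a characteristic-zero eigenform. Hatada's \cref{hatada3} then forces $\bar f = \bar\Delta$ in $S(\F[3])$, that is, the polynomial $\Delta$ of degree $1$. In particular $a_2(\bar f) = 0$, $a_7(\bar f) = -1$, and $a_3(\bar f) = 0$, so $T\bar f = 0$ for every $T \in \mathcal{T} = \{T_2,\, 1 + T_7,\, U\}$.

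Next, lift and decompose. Since $f$ is weak, $f = E_4^i F$ for some $i \in \{0,1,2\}$ and $F \in (\Zmod{9}{})[\Delta]$. The identity $\bar F = \bar\Delta$ (using $E_4 \equiv 1 \pmod 3$) yields $F = \Delta + 3G$ with $G \in (\F[3])[\Delta] \subset S(\F[3])$, so
\[ f = E_4^i \Delta + 3 E_4^i G \qquad \text{and} \qquad \deg f = \deg F = \max(1,\deg G). \]
It therefore suffices to verify the hypothesis of \cref{weightboundmod9} for $G$, namely that $\deg TG \leq 1$ for every $T \in \mathcal{T}$.

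The key observation is that $E_4^i \Delta$ is the unique normalized cuspform of weight $4i+12$ (since $\dim S_{4i+12} = 1$ for $i \in \{0,1,2\}$) and hence itself a Hecke eigenform: $T(E_4^i \Delta) = \lambda_T E_4^i \Delta$ for some $\lambda_T \in \Zmod{9}{}$. Because $T\bar\Delta = 0$, $\lambda_T \equiv 0 \pmod 3$; write $\lambda_T = 3\mu_T$ and $a_T(f) = 3\alpha_T$. Computing $Tf$ two ways, using that $9 = 0$ in $\Zmod{9}{}$ kills the $3 E_4^i G$ contribution on the right-hand side,
\[ T(E_4^i \Delta) + 3 T(E_4^i G) \;=\; Tf \;=\; a_T(f) f \;\equiv\; 3 \alpha_T E_4^i \Delta \pmod 9. \]
Dividing by $3$ and reducing modulo $3$ (so $E_4 \equiv 1$) gives $TG = (\alpha_T - \mu_T)\Delta$ in $(\F[3])[\Delta]$, whence $\deg TG \leq 1$. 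Applying \cref{weightboundmod9} then yields $\deg G \leq 10$, and hence $\deg f = \max(1, \deg G) \leq 10$.

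The main delicate point is the bookkeeping in the displayed computation: one must keep the mod-$3$ and mod-$9$ information cleanly separated to extract the bound on $TG$. In particular, it is essential that $T(E_4^i \Delta)$ itself be divisible by $3$, which is precisely what Hatada supplies for the strong eigenform $\bar\Delta$. No analogue of the divided-congruence correction term $2d\Delta$ from the mod-$4$ case (\cref{dcweakmod4}) appears, because \cref{dcweakisweak} guarantees that every $dc$-weak eigenform over $\Zmod{9}{}$ is already weak.
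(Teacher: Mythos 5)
Your proof is correct and takes essentially the same route as the paper's: reduce mod $3$ via Deligne--Serre and Hatada to identify $\bar f = \bar\Delta$, decompose $f = E_4^i\Delta + 3\cdot(\text{correction})$, use Hatada again to show both the eigenvalues of $f$ and those of the characteristic-zero eigenform $E_4^i\Delta$ are divisible by $3$, deduce $\deg T\bar G\leq 1$ for $T\in\mathcal{T}$, and invoke \cref{weightboundmod9}. The only difference is that you spell out the bookkeeping $\deg f=\max(1,\deg G)$ (choosing coefficient representatives so that $\deg G=\deg\bar G$) a bit more explicitly than the paper, which simply says it suffices to bound $\deg\bar g$.
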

\begin{proof} The image of $f$ in $S(\F[3])$ is strong by the Deligne-Serre lifting lemma (\cref{dslifting}), hence $f \equiv \Delta \pmod{3}$ by \cref{hatada3}. Since $f$ is weak, we may write
\[ f = E_4^i \Delta + 3g  \]
where $g \in S(\Zmod{9}{})$ and $i \in \{0,1,2\}$. Let $\bar{g}$ denote the image of $g$ in $S(\F[3])$. We only need to bound $\deg \bar{g}$. By \cref{hatada3}, we have $a_2(f) \equiv a_3(f) \equiv 1 + a_7(f) \equiv 0 \pmod{3}$. 
So for each $T \in \sh{T} = \{T_2, 1+T_7, U\}$ there exists $\lambda_T \in 3 \Zmod{9}{}$ such that
\[ T f = \lambda_T f = \lambda_T E_4^i \Delta = \lambda_T \Delta.  \]
On the other hand, the forms $E_4^i \Delta$ are eigenforms in characteristic $0$ for $i \in \{0,1,2\}$, so for each $T \in \sh{T}$ there exists $\lambda_T' \in 3\Zmod{9}{}$ (again by \cref{hatada3}) such that 
\[ T f = \lambda_T' E_4^i\Delta + 3 Tg = \lambda_T' \Delta + 3 Tg. \]
Therefore $\max\{ \deg T\bar{g} : T \in \sh{T} \} \leq 1$ and by \cref{weightboundmod9} we get $\deg \bar{g} \leq 10$. 
\end{proof}

\begin{proposition}\label{allwefmod9}All $dc$-weak eigenforms in $D(\Zmod{9}{})$ occur in the spaces $S_{120+4i}(\Zmod{9}{})$ with $i \in \{0,1,2\}$. The set map
\[ \{\mbox{$dc$-weak eigenforms in }D(\Zmod{9}{})\} \rightarrow \left(3\Zmod{9}{} \right)^4,   \] 
\[ f \in S(\Zmod{9}{}) \mapsto \left(\lambda - 1, a_2(f), a_3(f), 1+a_7(f) \right),\]
where $\lambda \in \Z$ is such that $[1+3]f = \lambda f$, is bijective. 
\end{proposition}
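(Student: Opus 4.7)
The approach is a direct analogue of the one used for \cref{allwefmod4}: combine the implication ``$dc$-weak $\Rightarrow$ weak'' with the weight bound of \cref{weightboundefmod9} to confine each eigenform to a finite-dimensional space, then enumerate by machine and read off the bijection. Since $p = 3 \geq 3$, \cref{dcweakisweak} tells us that every $dc$-weak eigenform $f \in D(\Zmod{9}{})$ is in fact weak, and moreover that the eigenvalue $\lambda$ of $t_\Lambda = [1+3]$ equals $(1+3)^{\beta}$ for a unique $\beta \in \{0, 1, 2\}$ which encodes the weight of $f$ modulo $3$. The three possible values $\lambda - 1 \in 3\Zmod{9}{}$ thus account for the first coordinate of the map, and $\beta$ determines the weight class.

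The next step is to realise each eigenform inside a specific low-weight space. By \cref{hatada3} we have $f \equiv \Delta \pmod{3}$, and using $E_4^3 \equiv 1 \pmod 9$ we may write $f$ canonically as $E_4^i F(\Delta)$ with $i \in \{0,1,2\}$ and $F \in (\Zmod{9}{})[\Delta]$ satisfying $F(0) = 0$, the index $i$ being forced by the weight modulo $6$. \cref{weightboundefmod9} gives $\deg F \leq 10$. Since $E_6 \equiv 1 \pmod 9$, multiplication by a suitable power of $E_6$ then places $f$ inside the $10$-dimensional space $S_{120+4i}(\Zmod{9}{})$ spanned by $\{E_4^i \Delta^c : 1 \leq c \leq 10\}$, proving the first assertion of the proposition.

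At this point each candidate eigenform must have the shape $E_4^i(\Delta + 3h(\Delta))$ with $h \in (\Zmod{3}{})[\Delta]$ of degree at most $9$ (using also that $a_2(f), a_3(f), 1+a_7(f) \equiv 0 \pmod 3$ by \cref{hatada3}). This leaves at most $3 \cdot 3^{10}$ candidates in total. Using the classical Sturm bound in weight $128$, one tests in Sage whether each candidate satisfies $T_n f = a_n(f) f$ for every $n$ below the bound, exactly as in \cref{allwefmod4}. This produces the complete list of weak eigenforms; inspection of the resulting $4$-tuples $(\lambda - 1, a_2(f), a_3(f), 1 + a_7(f))$ then verifies that the map is a bijection onto $(3\Zmod{9}{})^4$, a set of cardinality $81$. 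The main obstacle is purely computational but remains very tractable: the Sturm bound in weight $128$ is only of order $10$, and the candidate forms have a simple linear parameterisation within each $S_{120+4i}$, which drastically prunes the search.
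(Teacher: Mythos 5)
Your proof takes essentially the same route as the paper's: use \cref{dcweakisweak} to reduce $dc$-weak to weak (automatic since the coefficient ring is $\Zmod{9}{}$, so the $t_\Lambda$-eigenvalue is trivially in the image of $\Z[3]$), invoke the weight bound $\deg f \leq 10$ from \cref{weightboundefmod9}, place each candidate in $S_{120+4i}(\Zmod{9}{})$, and finish with a machine enumeration. The paper's own proof is a one-sentence appeal to exactly these ingredients; your write-up just spells out the intermediate steps (the $E_4^i$-grading, the twist by powers of $E_6$, the Sturm-bound finiteness of the check).

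One small inaccuracy: you say $h \in (\Zmod{3}{})[\Delta]$ has \emph{degree at most $9$}, but \cref{weightboundefmod9} gives $\deg F \leq 10$, hence $h$ may have degree up to $10$ (and indeed the eigenforms realised in the paper's data attain degree $10$, e.g.\ $\Delta + 6\Delta^{3} + 3\Delta^{7} + 6\Delta^{10}$). Your stated candidate count $3\cdot 3^{10}$ is in fact consistent with the correct degree bound of $10$ ($h_1,\dots,h_{10}$ free in $\Zmod{3}{}$), so this appears to be a transcription slip rather than a substantive error, but with the literal bound $9$ the enumeration would be incomplete. Everything else is sound.
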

\begin{proof} Using the fact that $dc$-weak eigenforms with coefficients in $\Zmod{9}{}$ are weak together with the weight bounds obtained in \cref{weightboundefmod9}, a brute force search in Sage for all weak eigenforms in the spaces $S_{120+4i}(\Zmod{9}{})$ for $i \in \{0,1,2\}$ gives us all $dc$-weak eigenforms of level $1$ with coefficients in $\Zmod{9}{}$ and allows us to prove the proposition. There are $81$ in all. Up to twist there are only $27$ and they all occur in $S_{120}$ (note that $\lambda$ and $i$ completely determine each other). %We list these in the following table. %\clearpage

\end{proof}

%%%%%%%%%%%%%%%%%%%%%%%%%%%%%%%%%%%%%%%%%%%%%%%%%%%%%%%%%%%%%

\section{Generators of the Hecke algebras}\label{section:heckegen}
Let $(A, \idl{m})$ be a local $\Z[p]$-algebra such that $pA \not = 0$ and $A/\idl{m} = \F[p]$. Let $\mor[\varphi]{A}{\Zmod{p}{2}}$ be a morphism of local $\Z[p]$-algebras. Then we necessarily have $\varphi(\idl{m}^2) = 0$ as $\varphi(\idl{m}) \subset p\Zmod{p}{2}$. This means that the restriction of $\varphi$ to $\idl{m}$ induces an $\F[p]$-vector space map
\[ \tilde{\varphi}: \mor{\idl{m}/\idl{m}^2}{p\Zmod{p}{2}}  \]
where $p\Zmod{p}{2}$ is regarded as a $1$-dimensional $\F[p]$-vector space with basis $p$. So we have a set map
\[ \mor[r]{\Hom{A}{\Zmod{p}{2}}}{\Hom[\set{F}_p]{\idl{m}/\idl{m}^2}{p\Zmod{p}{2}}}.    \]
The following lemma is well-known, but we include a proof of it as we could not locate it in the literature.  
\begin{lemma}\label{modp2homs} The map $r$ is injective. Its image consists precisely of the $\F[p]$-linear maps $\tilde{\varphi}: \idl{m}/\idl{m}^2 \rightarrow p\Zmod{p}{2}$ such that $\tilde{\varphi}(p) = p$. %In particular,  
%\[ \left| \Hom{A}{\Zmod{p}{2}}\right| = p^{\dim_{\F[p]}\left(\idl{m}/\idl{m}^2\right) - 1}.    \]
\end{lemma}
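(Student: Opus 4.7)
The plan is to exploit the fact that any local $\Z_p$-algebra morphism $\varphi:A\to \Z/p^2\Z$ automatically factors through $A/\idl{m}^2$. Indeed, locality forces $\varphi(\idl{m})\subseteq p\Z/p^2\Z$, and since $(p\Z/p^2\Z)^2=0$, we get $\varphi(\idl{m}^2)=0$. In particular, the definition of $\tilde\varphi$ is legitimate, and the necessity of the condition $\tilde\varphi(p)=p$ is a one-line observation: $\varphi$ is $\Z_p$-linear and unital, so $\varphi(p\cdot 1)=p$ in $\Z/p^2\Z$.

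For injectivity of $r$, I would use the residue-field hypothesis $A/\idl{m}=\F_p$ to write every $a\in A$ in the form $a=n\cdot 1+m$ with $n\in\Z$ and $m\in\idl{m}$. On the summand $n\cdot 1$ the map $\varphi$ is forced (it must send $n\cdot 1\mapsto n\bmod p^2$ by $\Z_p$-linearity), while on $\idl{m}$ it factors through $\idl{m}/\idl{m}^2$ by the observation above. Hence $\tilde\varphi$ determines $\varphi$ entirely.

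For surjectivity onto the affine subspace cut out by $\psi(p)=p$, given such a $\psi$ I would try to \emph{define} $\varphi$ by the formula
\[ \varphi(n\cdot 1+m)\;:=\;(n\bmod p^2)\;+\;\psi(m\bmod \idl{m}^2). \]
The main verification is well-definedness: if $n\cdot 1+m=n'\cdot 1+m'$, then $n-n'=m'-m$ lies in $\Z\cap\idl{m}=p\Z$, say $n-n'=pk$, and one needs $pk\equiv \psi([m'-m])=k\psi([p])\pmod{p^2}$, which is exactly the normalization hypothesis $\psi([p])=p$. After that, additivity is obvious, and multiplicativity reduces to expanding $(n_1\cdot 1+m_1)(n_2\cdot 1+m_2)$, observing $m_1m_2\in\idl{m}^2$, and comparing cross-terms $n_i\psi([m_j])$ with $\psi([n_i m_j])$ in $p\Z/p^2\Z$; these agree because $p\cdot\psi([m_j])=0$, so integer scaling descends to $\F_p$-scaling on both sides.

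The argument is structurally routine, with no substantive obstacle — the whole content is the identification $\psi([p])=p$, which simultaneously encodes the $\Z_p$-linearity constraint and resolves the ambiguity in the decomposition $a=n\cdot 1+m$. The only point requiring a little care is to keep track of which objects are $\F_p$-modules (both $\idl{m}/\idl{m}^2$ and $p\Z/p^2\Z$ are, but $A/\idl{m}^2$ is only a $\Z/p^2\Z$-algebra), and this is precisely why the normalization condition lives in $\Hom_{\F_p}$ rather than in a $\Z_p$-linear $\Hom$.
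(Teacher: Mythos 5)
Your argument is correct and follows essentially the same route as the paper: both reduce to $\idl{m}^2=0$, decompose each element into an integer part plus a piece in $\idl{m}$, use the normalization $\tilde\varphi(p)=p$ to resolve the ambiguity in that decomposition, and check multiplicativity by observing that the integer scaling on cross-terms descends to $\F_p$-scaling on both sides. The only cosmetic difference is that you keep the coefficient $n$ in $\Z$ (working with $\Z\cap\idl{m}=p\Z$) whereas the paper works directly with $x_0\in\Z/p^2\Z$ inside $A$; the substance is identical.
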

\begin{proof} Without loss of generality, we may assume that $\idl{m}^2 = 0$. So the image of $\Z[p]$ in $A$ is $\Zmod{p}{2}$. Since the residue field of $A$ is $\F[p]$, every $x \in A$ can be written as $x = x_0 + x_\idl{m}$ where $x_0 \in \Zmod{p}{2}$ and $x_\idl{m}\in \idl{m}$. 

Let $\varphi, \psi \in \Hom{A}{\Zmod{p}{2}}$ such that $\varphi|_{\idl{m}} = \psi|_{\idl{m}}$. Take $x = x_0 + x_\idl{m} \in A$. Then $\varphi(x_0) = \psi(x_0)$ as $\varphi$ and $\psi$ are $\Z[p]$-algebra homomorphisms, and $\varphi(x_\idl{m}) = \psi(x_\idl{m})$ by assumption, hence $\varphi(x) = \psi(x)$. This shows that $r$ is injective.

On the other hand, let $\tilde{\varphi}: \mor{\idl{m}}{p\Zmod{p}{2}}$ be an $\F[p]$-linear map such that $\tilde{\varphi}(p) = p$. We will show how to extend $\tilde{\varphi}$ to an algebra homomorphism. Let $x = x_0 + x_\idl{m} \in A$, and set $\varphi(x) := x_0 + \tilde{\varphi}(x_\idl{m})$. We explain why this assignment is well-defined. If $x = x_0' + x_\idl{m}'$ is another decomposition of $x$ with $x_0' \in \Zmod{p}{2}$ and $x_\idl{m}' \in \idl{m}$, then $x_0 - x_0' \in \Zmod{p}{2} \bigcap \idl{m} = p\Zmod{p}{2}$. So $x_0-x_0' = cp$ for some $c \in \F[p]$, and \[\tilde{\varphi}(x_0-x_0') = c\tilde{\varphi}(p) = cp = x_0-x_0'.\] This means that
\[ x_0 + \tilde{\varphi}(x_\idl{m}) - x_0' - \tilde{\varphi}(x_\idl{m}') = \tilde{\varphi}(0) = 0.  \]

It remains to check that the map $\varphi$ defined in this manner is multiplicative. First note that if $c \in \Zmod{p}{2} \subset A$ and $x \in \idl{m}$, then $\tilde{\varphi}(cx) = c\tilde{\varphi}(x)$ as these two expressions only depend on the residual image of $c$ in $\F[p]$. 

Now let $x = x_0 + x_\idl{m}$ and $y = y_0 + y_\idl{m}$ be elements of $A$. Then
\[ xy = x_0y_0 + x_0y_m + x_m y_0  \]
and
\[\varphi(xy) = x_0 y_0 + \tilde{\varphi}(x_0 y_m + x_my_0).  \]
But
\[\varphi(x)\varphi(y) = x_0 y_0 + x_0\tilde{\varphi}(y_m) + y_0\tilde{\varphi}(x_m) = x_0y_0 + \tilde{\varphi}(x_0y_m + x_my_0) = \varphi(xy).    \] 
\end{proof}

We now apply this to the Hecke algebra in characteristic $0$. When $p = 2$ or $3$, there is only one mod $p$ eigenform in level $1$, so the Hecke algebras $\set{T}(2, \Z[2])$ and $\set{T}(3, \Z[3])$ are local rings. 

\begin{theorem}\label{heckealggen}\leavevmode
\begin{enumerate}[(i)]
\item The Hecke algebra $\set{T}(2,\Z[2])$ is generated as a $\Z[2]$-algebra by the operators $U$, $T_3$, and $T_5$, and $t_\Lambda := [1+4]$.
\item The Hecke algebra $\set{T}(3,\Z[3])$ is generated  as a $\Z[3]$-algebra  by the operators $U$, $T_2$, $1+T_7$, and $t_\Lambda := [1+3]$.
\end{enumerate}

\end{theorem}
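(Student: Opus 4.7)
The strategy for both parts is to pin down the cotangent space $\idl{m}/\idl{m}^2$ of the Hecke algebra (which is a complete local $\Z[p]$-algebra in each case, since in level $1$ there is a unique mod-$p$ eigenform for $p = 2, 3$), verify that the proposed operators project to a basis of this cotangent space, and conclude via a topological Nakayama / Cohen structure theorem argument. I describe the argument for $p = 2$; the case $p = 3$ is entirely parallel, using \cref{allwefmod9} in place of \cref{allwefmod4} and the generators $U, T_2, 1 + T_7, t_\Lambda - 1$.

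By the pairing isomorphism of \cref{section:swdc} together with \cref{heckebasechange}, the set $\Hom[\Z[2]]{\set{T}(2, \Zmod{4}{})}{\Zmod{4}{}}$ is in natural bijection with the $dc$-weak eigenforms in $D(\Zmod{4}{})$, which by \cref{allwefmod4} has cardinality $16 = 2^4$. Let $A := \set{T}(2, \Zmod{4}{})$. Since the $\Hom$ set is nonempty and any homomorphism sends $2$ to $2 \neq 0$, we must have $2 \notin \idl{m}_A^2$ (otherwise $\varphi(2)$ would lie in $(2\Zmod{4}{})^2 = 0$). \cref{modp2homs} therefore yields $2^4 = 2^{\dim_{\F[2]}(\idl{m}_A/\idl{m}_A^2) - 1}$, so $\dim_{\F[2]}(\idl{m}/\idl{m}^2) = 5$, where $\idl{m}$ denotes the maximal ideal of $\set{T}(2, \Z[2])$ (using $\idl{m}_A/\idl{m}_A^2 \cong \idl{m}/\idl{m}^2$ because $4 \in \idl{m}^2$).

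The sharper content of \cref{allwefmod4}---that these $16$ homomorphisms are actually distinguished by their values on $U, T_3, T_5$, and $t_\Lambda$---translates through \cref{modp2homs} into the statement that the evaluation map sending $\tilde\varphi$ to $(\tilde\varphi(\overline U), \tilde\varphi(\overline{T_3}), \tilde\varphi(\overline{T_5}), \tilde\varphi(\overline{t_\Lambda - 1}))$ is a bijection from the $4$-dimensional affine space of $\F[2]$-linear functionals $\tilde\varphi: \idl{m}/\idl{m}^2 \to 2\Zmod{4}{}$ satisfying $\tilde\varphi(2) = 2$ onto $\F[2]^4$. Dualising, this forces the residues $\overline U, \overline{T_3}, \overline{T_5}, \overline{t_\Lambda - 1}$ to descend to an $\F[2]$-basis of the $4$-dimensional quotient $\idl{m}/(\idl{m}^2 + \langle 2 \rangle)$.

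Finally, since $\set{T}(2, \Z[2]) = \varprojlim_k \set{T}_k(2, \Z[2])$ is complete in the $\idl{m}$-adic topology and has a finite-dimensional cotangent space, Cohen's structure theorem furnishes a continuous surjection $\Z[2]\dbr{X_1, \ldots, X_4} \twoheadrightarrow \set{T}(2, \Z[2])$ sending the $X_i$ to any lifts of a basis of $\idl{m}/(\idl{m}^2 + \langle 2 \rangle)$; by the previous paragraph we may take these lifts to be $U, T_3, T_5, t_\Lambda - 1$, which proves the claim (together with the trivial observation that each of these lies in $\idl{m}$ by \cref{hatada2} for the $T_\ell$'s and because $t_\Lambda$ acts as $(1+p)^k \equiv 1 \pmod{p}$ on a classical weight-$k$ form). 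The entire weight of the argument rests on the explicit classifications of $dc$-weak eigenforms modulo $p^2$ in \cref{allwefmod4} and \cref{allwefmod9}; once those are available, translating the counting data into the basis statement via \cref{modp2homs} and finishing by topological Nakayama is essentially formal.
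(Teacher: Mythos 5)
Your proof is correct and follows essentially the same strategy as the paper: translate the bijection from \cref{allwefmod4} (resp.\ \cref{allwefmod9}) into linear-algebraic information about $\idl{m}/\idl{m}^2$ via \cref{modp2homs}, identify a basis, and conclude by Nakayama / Cohen structure. The only difference is presentational --- you argue by dualising, concluding directly that the residues of the four operators span $\idl{m}/(\idl{m}^2 + (p))$, whereas the paper spells out the required test functionals $\delta_T$ and $\delta$ explicitly to show that $\{p\}\cup\mathcal{S}$ is linearly independent in $\idl{m}/\idl{m}^2$; both arguments carry the same content.
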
 
\begin{proof} Let $\idl{m}$ be the maximal ideal of $\set{T}(p,\Z[p])$. By \cref{allwefmod4} and \cref{allwefmod9} we see that $\Hom{\set{T}}{\Zmod{p}{2}}$ is a finite set of $p^4$ elements. Using \cref{modp2homs}, we deduce that $\idl{m}/\idl{m}^2$ is finite dimensional over $\F[p]$ and that $p^{\dim \idl{m}/\idl{m}^2 - 1} = p^4$. Thus $\dim \idl{m}/\idl{m}^2 = 5$. Let 
\[
\mathcal{S} = \begin{cases}\{U, T_3, T_5, t_\Lambda -1 \}\indent\indent\mbox{  if } p=2, \\
\{U, T_2, 1+T_7, t_\Lambda -1 \} \indent \mbox{if } p = 3.   \end{cases}
\]
\cref{allwefmod4} for $p = 2$ and \cref{allwefmod9} for $p = 3$ show that for each $T \in \mathcal{S}$ there exists an $\F[p]$-linear map $\delta_T:\mor{\idl{m}/\idl{m}^2}{p\Zmod{p}{2}}$ such that
\[ \delta_T(p) = \delta_T(T) = p \mbox{  and  } \delta_T(T') = 0 \mbox{  for all } T' \in \mathcal{S}\setminus\{T\}.  \]
The existence of these maps shows that the images of $\mathcal{S}$ in $\idl{m}/\idl{m}^2$ are $\F[p]$-linearly independent. 

Suppose $\alpha\in\F[p]$ and $\{\alpha_T\}_{T\in\mathcal{S}}\subset \F[p]$ such that
\[ \alpha(p + \idl{m}^2) + \sum_{T\in\mathcal{S}}\alpha_T (T+\idl{m}^2) = 0 \tag{$\ast$}\label{linindep}.  \]
Applying each $\delta_T$ to \cref{linindep} in turn, we find that $\alpha = -\alpha_T$ for all $T\in\mathcal{S}$. 

For $p = 2$, put $T = T_3$ and $T' = T_5$, and for $p = 3$ put $T = T_2$ and $T' = 1+T_7$. By \cref{allwefmod4} for $p = 2$ and \cref{allwefmod9} for $p = 3$, there exists an $\F[p]$-linear map $\delta: \idl{m}/\idl{m}^2 \rightarrow p\Zmod{p}{2}$ such that $\delta(p) = \delta(T) = \delta(T') = p$ and 
\[ \delta(T'') = 0 \mbox{   for all } T'' \in \mathcal{S}\setminus \{T, T'\}. \]
Applying $\delta$ to \cref{linindep}, we find that $\alpha = 0$. Thus the image of $\{p\} \cup  \mathcal{S}$ in $\idl{m}/\idl{m}^2$ is $\F[p]$-linearly independent and therefore constitutes an $\F[p]$-basis of $\idl{m}/\idl{m}^2$. By Nakayama's lemma, $\mathcal{S}$ generates $\set{T}(p, \Z[p])$ as a $\Z[p]$-algebra. 

%Finally, because $\set{T}(p, \Z[p])$ is a Noetherian local ring, we have $\dim \set{T}(\Z[p])\leq \dim \idl{m}/\idl{m}^2$, and therefore $\dim \set{T}(\Z[p]) \leq 5$. But $\set{T}(p, \Z[p]) \cong \set{T}^{pf}(\Z[p])\dbr{U}$, so we get $\dim \set{T}(p, \Z[p]) \geq 5$ by the Gouv\^ea-Mazur infinite fern argument (\cite{GM98}). 

\end{proof}
\begin{remark} In fact, the algebras $\set{T}^{pf}(\Z[2])$ and $\set{T}^{pf}(\Z[3])$ are power series rings in three variables over (respectively) $\Z[2]$ and $\Z[3]$. This can be shown using the deformation theory of pseudo-representations (\cite{Ch14}, \cite{Be12}) together with the Gouv\^ea-Mazur infinite fern argument (\cite{GM98}, \cite{Em11b}). 
\end{remark}
%%%%%%%%%%%%%%%%%%%%%%%%%%%%%%%%%%%%%%%%%%%%

\section{Finiteness of strong eigenforms modulo $128\Zc[2]$}\label{section:finiteness2}
In this section, we will prove the following theorem. 
\begin{theorem}\label{finiteness} If $f$ is a level $1$ eigenform of weight $k$ and $\ell$ is an odd prime then $a_\ell(f) \equiv 1+\ell^{k-1} \pmod{128 \Zc[2]}$. Consequently, there are only finitely many congruence classes $\pmod{128\Zc[2]}$ of eigenforms of level $1$.
\end{theorem}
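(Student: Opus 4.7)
The strategy is to combine the Hecke algebra generation result (\cref{heckealggen}) with two explicit prime-level congruences proved via Merel's modular symbols. Since $\set{T}(2,\Z[2])$ is generated over $\Z[2]$ by $U$, $T_3$, $T_5$ and $t_\Lambda = [1+4]$, a system of Hecke eigenvalues of an eigenform mod $128$ is determined by these four values. For a weight-$k$ eigenform $f$, the $t_\Lambda$-eigenvalue is $(1+4)^k = 5^k \pmod{128}$, which takes at most $32$ values as $k$ varies (since $5$ has order $32$ in $\units{(\Zmod{128}{})}$). The key reduction is therefore to prove
\[ a_3(f) \equiv 1 + 3^{k-1} \pmod{128\Zc[2]}, \qquad a_5(f) \equiv 1 + 5^{k-1} \pmod{128\Zc[2]}. \]
Granting these, the triple $(t_\Lambda, T_3, T_5)$-eigenvalue mod $128$ depends only on $k \bmod 32$, so only finitely many ``away-from-$2$'' systems of Hecke eigenvalues mod $128$ arise; concretely, every level-$1$ eigenform is congruent away from $2$ mod $128$ to some reference eigenform of weight at most $46$.

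To establish the two congruences above, the plan is to use Merel's modular symbols, which in level $1$ admit a particularly explicit description: the weight-$k$ cuspidal symbol space is a quotient of $\Sym^{k-2}(\Z^2)$ by Manin-type relations, and $T_\ell$ acts through an explicit finite sum of $\GL_2(\Z)$ matrices. By the duality between cusp forms and modular symbols, $a_\ell(f) \equiv 1 + \ell^{k-1} \pmod{128}$ is equivalent to the assertion that the operator $T_\ell - (1 + \ell^{k-1})\,\id$ annihilates the cuspidal symbol space modulo $128$. Following the algorithm developed in \cref{section:algorithm}, this is encoded as a finite collection of polynomial identities in $(\Zmod{128}{})[X,Y]$, uniformly in $k$, and then verified by Sage.

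With the two congruences in hand, the remaining step is to enumerate the finitely many level-$1$ eigenforms $g$ of weight at most $46$ and directly check $a_\ell(g) \equiv 1 + \ell^{k-1} \pmod{128\Zc[2]}$ for every odd prime $\ell$ up to the Sturm bound; combined with the away-from-$2$ reduction, this will yield the first assertion of the theorem for all eigenforms. The finiteness statement then follows: the odd-$\ell$ eigenvalues are controlled by $k \bmod 32$, while $a_2 \equiv 0 \pmod{8}$ (\cref{hatada2}) together with the Hecke polynomial $X^2 - a_2 X + 2^{k-1}$ at $2$ restricts the $U$-eigenvalue to a finite set, so only finitely many systems mod $128$ are possible. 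The main obstacle is the proof of the two congruences: the modular symbol space grows unboundedly with $k$, and the nontrivial content is to reduce the $k$-dependent congruence to a $k$-independent polynomial identity modulo $128$, which requires a careful $2$-adic analysis of the action of $T_\ell$ --- this is precisely the content of \cref{section:algorithm}.
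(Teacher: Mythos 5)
Your treatment of the away-from-$2$ part is essentially the paper's: generation of $\set{T}(2,\Z[2])$ by $U$, $T_3$, $T_5$, $t_\Lambda$ (\cref{heckealggen}), the fact that the $t_\Lambda$-eigenvalue is $5^k\in\Zmod{128}{}$ with $5$ of order $32$, the congruences $a_3(f)\equiv 1+3^{k-1}$ and $a_5(f)\equiv 1+5^{k-1}\pmod{128\Zc[2]}$ via Merel modular symbols (\cref{actiontmod32}), and then reduction to weight $\le 46$ plus a direct Sturm-bound check (\cref{kolbergtype}). That part is sound and follows the paper.

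The gap is in how you dispose of the $U$-eigenvalue. You write that ``$a_2\equiv 0\pmod{8}$ together with the Hecke polynomial $X^2-a_2X+2^{k-1}$ at $2$ restricts the $U$-eigenvalue to a finite set.'' That argument does not close. Hatada's congruence gives $v_2(a_2(f))\ge 3$, but the question is about the image of $a_2(f)$ in $\Zc[2]/128\Zc[2]$, and $8\Zc[2]/128\Zc[2]$ is an \emph{infinite} set: knowing $v_2(a_2)\ge 3$ in no way pins down $a_2$ modulo $128\Zc[2]$ to finitely many classes, and the $p$-stabilization polynomial only relates the $U$-eigenvalue to $a_2$ and $2^{k-1}$ --- it adds nothing mod $128$ once $k\ge 8$. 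What the paper actually uses here is Corollary 11 of \cite{KRW16}, which rests on Coleman--Wan bounds for the dimension of the slope $<7$ overconvergent eigenspace: eigenforms with $a_2(f)\not\equiv 0\pmod{128\Zc[2]}$ have slope $<7$, and the geometric finiteness result shows there are only finitely many mod-$128$ systems among them. For the complementary case $a_2(f)\equiv 0\pmod{128\Zc[2]}$ the value $\varphi(U)=0$ is automatically in $\Zmod{128}{}$, and the rest of the argument goes through. Your proposal omits this input entirely, so the finiteness conclusion does not follow from what you have written; you need to either cite the Coleman--Wan input as the paper does, or supply a genuine replacement, which the Hecke polynomial does not provide.
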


First we will prove this for eigenforms of low weight.
\begin{proposition}\label{kolbergtype} If $f$ is an eigenform of level $1$ and weight $k$, then for every odd integer $n \geq 1$ we have $a_n(f) \equiv \sigma_{k-1}(n) \pmod{128\Zc}$. 
\end{proposition}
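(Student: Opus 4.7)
The plan is to reduce the assertion to a single congruence on primes via multiplicativity, then verify directly for the relevant finite family of eigenforms.

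First, I would reduce to prime-power indices. Since $f$ is a normalised Hecke eigenform, $n \mapsto a_n(f)$ is multiplicative on coprime pairs, as is $n \mapsto \sigma_{k-1}(n)$. Moreover, the Hecke recursion
\[ a_{\ell^{r+1}}(f) = a_\ell(f) a_{\ell^r}(f) - \ell^{k-1} a_{\ell^{r-1}}(f) \]
is mirrored term-by-term by
\[ \sigma_{k-1}(\ell^{r+1}) = \sigma_{k-1}(\ell) \sigma_{k-1}(\ell^r) - \ell^{k-1} \sigma_{k-1}(\ell^{r-1}). \]
Hence by induction on $r$, the congruence for all odd $n \geq 1$ reduces to the prime case
\[ a_\ell(f) \equiv 1 + \ell^{k-1} \pmod{128\Zc[2]} \quad \text{for every odd prime } \ell. \]

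Second, since the present proposition is announced as the ``low weight'' instance of \cref{finiteness}, I would restrict to the finitely many level-$1$ eigenforms of weight $k \leq 46$ (the bound arising from the Hecke-algebra generation argument of \cref{section:heckegen}). For each such $f$ and for each odd prime $\ell$ relevant to the verification (a finite set, since $\ell^{k-1} \pmod{128}$ is periodic in $k$ of period dividing the multiplicative order of $\ell$ in $\units{(\Zmod{128}{})}$, itself a divisor of $32$), I would verify the congruence by direct computation of the $q$-expansion in Sage. Hatada's theorem \cref{hatada2}, which gives the analogous statement modulo $8\Zc[2]$, supplies a base case against which to cross-check each mod-$128$ computation. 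Alternatively, using the presentation $M(\Z) = \Z[E_4,E_6,\Delta]/(E_4^3 - E_6^2 - 1728\Delta)$, one can write each low-weight eigenform as an isobaric polynomial in $E_4, E_6, \Delta$ with integer coefficients and exploit the congruences $E_4 \equiv 1 \pmod{16\Z[2]}$ and $E_6 \equiv 1 \pmod{8\Z[2]}$ to reduce many of the verifications to congruences for coefficients of $\Delta$ (a Kolberg-type setup).

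The main obstacle I anticipate is handling those eigenforms whose Hecke field $K_f$ is ramified above $2$: in that case the ideal $128\Zc[2] \cap \order{K_f}$ is a high power of the maximal ideal weighted by the ramification index, and so the congruence $a_\ell(f) \equiv 1 + \ell^{k-1}$ must be verified in the correct $2$-adic completion rather than as a rational congruence. For $k \leq 46$ this is a finite list of cases, and one can either work directly in the completions or replace $a_\ell(f)$ by its Galois trace over the orbit (comparing it with the sum of $1 + \ell^{k-1}$ over the Galois-conjugate forms, which equals $[K_f : \Q] (1 + \ell^{k-1})$ and is rational) and control the trace modulo $128\Zc[2]$. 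Once this case analysis is completed, the inductive passage via Hecke recursion and multiplicativity closes out the proof.
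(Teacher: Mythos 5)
Your reduction via multiplicativity and the Hecke recursion is sound, and the restriction to weights $k \leq 46$ is consistent with how the paper organises its argument. But there is a real gap in the middle: after reducing to the prime case $a_\ell(f) \equiv 1 + \ell^{k-1} \pmod{128\Zc[2]}$, you need to verify this for \emph{all} odd primes $\ell$, and you assert this is ``a finite set'' without a mechanism that makes it so. The observation you give --- that $\ell^{k-1} \bmod 128$ is periodic in $k$ --- bounds the dependence on $k$, not the set of primes $\ell$. As written, your verification would still be infinite.

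The paper's proof supplies exactly the missing finiteness mechanism. For each relevant $k$ it forms $g = f - G_k$ (killing the non-integral constant term of the Eisenstein series) and then $h = g - V(U(g))$, which is a modular form on $\Gamma_0(2)$ with coefficients in $\Zc$ whose odd $q$-coefficients are precisely $a_n(f) - \sigma_{k-1}(n)$ and whose even $q$-coefficients vanish. The assertion then becomes $h \equiv 0 \pmod{128\Zc}$, and because $M_k(\Gamma_0(2), \Z)$ admits a Victor Miller (integral echelon) basis, this is equivalent to the vanishing of $a_n(h) \bmod 128\Zc$ for $1 \leq n \leq d(k) = \dim_{\C} M_k(\Gamma_0(2), \C)$. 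That Sturm-type bound is what turns the check into a finite Sage computation. Your alternative sketch via the presentation $\Z[E_4,E_6,\Delta]$ and the congruences $E_4 \equiv 1 \pmod{16}$, $E_6 \equiv 1 \pmod 8$ is gesturing at the same kind of structural reduction but is likewise left without a bound on how many coefficients need to be inspected.

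Two smaller remarks. First, the reduction to prime index is not actually needed: the Victor Miller argument verifies $a_n(f) \equiv \sigma_{k-1}(n)$ for all odd $n$ up to the Sturm bound, primes and composites alike, and this already suffices. Second, your worry about ramification in $K_f$ is legitimate but your proposed fix via Galois traces is the wrong tool: a congruence modulo $128\Zc[2]$ on $\sum_\sigma a_\ell(\sigma f)$ does not recover congruences on each $a_\ell(\sigma f)$. What the paper does instead is simply work in $\Zc$; since any $\sigma \in \Gal(\Qc/\Q)$ preserves $\Zc$, verifying $a_n(f) \equiv \sigma_{k-1}(n) \pmod{128\Zc}$ for one representative of the Galois orbit (and the paper checks, for each $k$ in range, that there is a single orbit) automatically gives it for all conjugates.
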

\begin{proof} Let $k$ be as in the statement. For each such $k$, let the space $M_k(\Gamma_0(2), \Z)$ of modular forms on $\Gamma_0(2)$ with integral coefficients. We check on Sage that each of these spaces has a Victor Miller basis, i.e.\ an integral basis $\{b_0, \ldots, b_{d(k)}\}$ where 
\[d(k) = \dim M_k(\Gamma_0(2), \C) = \dim M_k(\Gamma_0(2), \Z)\otimes \C\]
such that
\[ b_i = q^i + O(q^{d(k)+1}). \]
Let $f$ be an eigenform of level $1$ and weight $k$, and let $G_k$ be the Eisenstein series
\[ G_k = -\frac{B_k}{2k}E_k = -\frac{B_k}{2k} + \sum_{n \geq 1}\sigma_{k-1}(n)q^n. \]
Put $g = f - G_k$. If $V$ is the operator
\[ V\left(\sum a_n q^n \right) = \sum a_n q^{2n},  \]
then the form $h = g - V(U(g))$ is a modular form on $\Gamma_0(2)$ with coefficients in $\Zc$ (the constant term of $G_k$, which is not $2$-integral, is cancelled out). Moreover, we have
\[ a_n(g) = \sum_{\substack{n\geq 1\\n \equiv 1 \pmod{2}}}a_n(f - G_k)q^n.  \]
The statement that 
\[ a_n(f) \equiv \sigma_{k-1}(n) \pmod{128\Zc} \]
is then equivalent to the statement that $h \equiv 0 \pmod{128\Zc}$. Since we have a Victor Miller basis for each space $M_k(\Gamma_0(2), \Zc)$, this in turn is equivalent to the statement that
\[ a_n(h) \equiv 0 \pmod{128 \Zc}\indent \forall 1\leq n \leq d(k).  \]
Thus it is enough to check that
\[ \tag{$\dagger$}\label{vmcheck} a_n(f) \equiv \sigma_{k-1}(n) \pmod{128\Zc} \indent \forall 1 \leq n \leq d(k), n \equiv 1 \pmod{2}.  \]

We will check this on Sage. When $k \leq 12$ or $k = 14$, there are no normalised cuspidal eigenforms. For other values of $k$, we check that there is a single Galois orbit\footnote{Recall that Maeda's conjecture predicts that there is a single Galois orbit for all $k$.} of eigenforms of level $1$ and weight $k$ and we pick a representative $f$ of the Galois orbit. Clearly, if 
\[ a_n(f) \equiv \sigma_{k-1}(n) \pmod{128\Zc} \indent \forall 1 \leq n \leq d(k), n \equiv 1 \pmod{2} \]
and $\sigma \in \Gal(\Qc/\Q)$, then
\[ a_n(\sigma(f)) \equiv \sigma_{k-1}(n) \pmod{128\Zc} \indent \forall 1 \leq n \leq d(k), n \equiv 1 \pmod{2}.  \]
This allows us to verify the statement (\ref{vmcheck}) on Sage for all $12 \leq k \leq 46$ with $k \not = 14$. 
\end{proof}
\begin{remark} For $k = 12$, the result in \cref{kolbergtype} follows from a theorem of Bambah and Chowla (\cite{BC47}). See also Kolberg's strengthening of this result for $k = 12$ in \cite{Ko62} (c.f.\ \cite{Sw73}).
\end{remark}

We will also use the following proposition.

\begin{proposition}\label{actiontmod32} If $f$ is a level $1$ eigenform of weight $k$, then 
\[ a_3(f) \equiv 1 + 3^{k-1} \pmod{128\Zc[2]}  \] and
\[ a_5(f) \equiv 1 + 5^{k-1} \pmod{128\Zc[2]}. \]
\end{proposition}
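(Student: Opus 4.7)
The plan is to reduce the congruences to finite polynomial identities in $\left(\Zmod{128}{}\right)[X,Y]$ by using Merel's presentation of modular symbols for level $1$, together with an explicit Heilbronn-style formula for $T_3$ and $T_5$. Via Eichler-Shimura, the action of $T_\ell$ on $S_k(\SL_2(\Z))$ is identified with its action on the cuspidal part of $H^1(\SL_2(\Z), V_{k-2})$ where $V_{k-2} = \Sym^{k-2}\Z^2$, viewed concretely as homogeneous polynomials of degree $k-2$ in $X,Y$. Hence the assertion $a_\ell(f) \equiv 1 + \ell^{k-1} \pmod{128\Zc[2]}$ for every eigenform $f$ of weight $k$ is equivalent to the operator congruence $T_\ell \equiv 1 + \ell^{k-1} \pmod{128}$ on the cuspidal subquotient of $V_{k-2}$.

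First I would recall Merel's presentation: the cuspidal part is the quotient of $V_{k-2}$ by the Manin relations generated by $P + P|_\sigma$ and $P + P|_\tau + P|_{\tau^2}$, where $\sigma = \mat{0}{-1}{1}{0}$ and $\tau = \mat{0}{-1}{1}{-1}$ are the standard generators of $\PSL_2(\Z)$. Next, I would write the Hecke operator $T_\ell$ on $V_{k-2}$ as a sum over a set of Heilbronn representatives of determinant $\ell$: for $\ell = 3$ these are the four matrices $\mat{1}{0}{0}{3}, \mat{1}{1}{0}{3}, \mat{1}{2}{0}{3}, \mat{3}{0}{0}{1}$, and analogously six matrices for $\ell = 5$. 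Applied to a monomial $X^i Y^{k-2-i}$, the formula expands to a homogeneous polynomial in $X,Y$ whose coefficients are binomial-type expressions in $k$ and $i$.

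The assertion then becomes: in $\left(\Zmod{128}{}\right)[X,Y]$, for every generator $P$ of the cuspidal submodule, the polynomial $T_\ell P - (1 + \ell^{k-1}) P$ lies in the ideal generated by the Manin relations. Because the action of the Heilbronn matrices and of $\sigma, \tau$ has a uniform form in $k$ (only the exponents of $X,Y$ vary), and because $\ell^{k-1} \bmod 128$ is periodic in $k$ with period dividing $\mathrm{ord}_{(\Z/128)^\times}(\ell)=32$, the verification collapses to a finite set of polynomial identities. These are discharged by the algorithm of \cref{section:algorithm}, which produces explicit witness polynomials certifying the identity modulo $128$ and the relation ideal.

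The main obstacle is the parametric dependence on $k$: the cuspidal symbols space has dimension growing with $k$, so a brute-force check for each weight is impossible. The crux is to recognize that both sides of the desired identity admit a uniform description via the Heilbronn matrices and via the residue class of $k$ modulo the order of $\ell$ in $(\Z/128)^\times$, so that the infinite family of weight-dependent identities is subsumed into a finite collection of symbolic identities in $\left(\Zmod{128}{}\right)[X,Y]$ to be verified computationally. A secondary difficulty is extracting from Merel's presentation a set of cuspidal generators small enough to make the witness search feasible, which is precisely what the algorithm of \cref{section:algorithm} is designed to handle.
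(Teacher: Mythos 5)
Your high-level strategy matches the paper exactly: use Merel's presentation of level~$1$ modular symbols, write $T_\ell$ as a matrix sum, exploit the periodicity of $\ell^{k-1}\pmod{128}$ and the uniformity in $k$ to reduce to a finite family of identities in $(\Zmod{128}{})[X,Y]$, and discharge them computationally. However, there are two concrete gaps in the way you set it up.

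First, the matrices you list for $T_3$ — $\mat{1}{0}{0}{3},\mat{1}{1}{0}{3},\mat{1}{2}{0}{3},\mat{3}{0}{0}{1}$ — are the standard right coset representatives for the Hecke operator on modular \emph{forms}, not the Heilbronn--Merel matrices. The formula $T_\ell P = \sum_M P[M]$ for modular \emph{symbols} requires summing over $\mathcal{HM}_\ell = \{\mat{a}{b}{c}{d} : a>b\geq 0,\ d>c\geq 0,\ ad-bc=\ell\}$, which for $\ell=3$ has seven elements (e.g.\ it includes $\mat{2}{1}{1}{2}$). Two of your four matrices violate $a>b$, and using the coset representatives directly on a single symbol does not compute the Hecke action — that is precisely what the continued-fraction/Heilbronn device fixes. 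With the wrong matrix set, the polynomial identities simply would not hold.

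Second, and more subtly, you treat the cuspidal part as a quotient of $V_{k-2}$ and assert that it suffices to show $T_\ell P - (1+\ell^{k-1})P$ lies in the ideal of Manin relations mod~$128$. But the relevant Hecke module is the subgroup $\set{S}^+_{w+2}$ of the quotient $\set{M}_{w+2}$, and the finite identities only give $(T_\ell - cI)(\set{S}^+_{w+2}) \subset p^m\set{M}_{w+2}\cap\set{S}^+_{w+2}$. To conclude that the matrix of $T_\ell - cI$ on $\set{S}^+_{w+2}$ has entries in $p^m\Z$, one must in addition show that $p^m\set{M}_{w+2}\cap\set{S}^+_{w+2} = p^m\set{S}^+_{w+2}$, i.e.\ that $\set{S}^+_{w+2}$ is saturated in $\set{M}_{w+2}$. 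The paper proves this using torsion-freeness of $\set{M}_{w+2}$ and of $\set{M}_{w+2}/\set{S}_{w+2}$; this lattice step is exactly the gap that the referee pointed out needed to be filled, and your proposal skips it entirely. You also don't address why it suffices to check the identities only for the generators $X^iY^{w-i}-X^{w-i}Y^i$ with $i$ even, which comes from the $+$-eigenspace of the involution $\iota^\ast$ and the relation $\langle X^iY^{w-i},1,\sigma\rangle=0$ for odd $i$.
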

\begin{proof} See \cref{section:algorithm}.
\end{proof}

\begin{proof}[Proof of \cref{finiteness}] By Corollary 11 of \cite{KRW16} (which uses results of Coleman and Wan), there are only finitely many congruence classes modulo $128$ of eigenforms $f$ of level $1$ such that $a_2(f) \not \equiv 0 \pmod{128\Zc[2]}$. Thus we only need to consider eigenforms $f$ of level $1$ such that $a_2(f) \equiv 0 \pmod{128 \Zc[2]}$. 

Let $\sh{O}_f$ be a finite extension of $\Z[2]$ containing the coefficients of $f$, and let $R = \sh{O}_f/128\sh{O}_f$. Let $\bar{f}$ be the image of $f$ in $D(R)$. In \cref{section:swdc} we saw that $\set{T}(R) = \set{T}(\Z[2])\otimes_{\Z[2]} R$ and that $\bar{f}$ corresponds to an $R$-algebra homomorphism $\varphi: \mor{\set{T}(R)}{R}$. By \cref{heckealggen}, $\set{T}(2, R)$ is generated by $T_3, T_5, U$, and $t_\Lambda$. But $\set{T}(2, R) = \set{T}(R)$ by \cref{comparehecke}. Hence by \cref{heckealggen},  $a_n(f) = \varphi(T_n)$ is a polynomial with coefficients in $\Zmod{128}{}$ in $a_2(\bar{f}) = \varphi(U)$, $a_3(\bar{f}) = \varphi(T_3)$, $a_5(\bar{f}) = \varphi(T_5)$, and the eigenvalue $\lambda$ corresponding to $t_\Lambda = [1+4]$.

We have assumed $a_2(\bar{f}) = 0$, and $\lambda$ clearly lies in $\Zmod{128}{}$. Furthermore we have $\{a_3(\bar{f}), a_5(\bar{f})\} \subset \Zmod{128}{}$ by \cref{actiontmod32}, and these numbers are determined by $k \pmod{32}$. Thus $f \in S(\Zmod{128}{})$ and has to be congruent away from $2$ to an eigenform of level $1$ and weight $k$ with $k \not = 14$ and $12 \leq k \leq 46$. Now \cref{finiteness} follows from this using \cref{kolbergtype}. 
\end{proof}

%%%%%%%%%%%%%%%%%%%%%%%%%%%%%%%%%%%%%%%%%%%%%%%%%%%%%%%%%
\section{Finiteness of strong eigenforms modulo $9\Zc[3]$}\label{section:finiteness3}
We conjecture that the following is true.
\begin{conjecture}\label{a2conj} Let $f$ be an eigenform of level $1$ and weight $w + 2$. Then 
\[ a_2(f) \equiv \begin{cases} 3\mbox{ or } 6\pmod{9\Zc[3]}\indent \mbox{if } w \equiv 0 \pmod{6} \\ 3 \mbox{ or } 6\pmod{9\Zc[3]}\indent \mbox{if } w \equiv 4 \pmod{6}\\ 0 \pmod{9\Zc[3]}\indent\indent\indent \mbox{if } w \equiv 2 \pmod{6}.\end{cases} \]
\end{conjecture}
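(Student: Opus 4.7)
The plan is to establish \cref{a2conj} by a direct modular symbols computation for the operator $T_2$ at level $1$ modulo $9\Zc[3]$, analogous to the author's treatment of $T_3, T_5, T_7$. Hatada's \cref{hatada3} already gives $a_2(f) \equiv 0 \pmod{3\Zc[3]}$, so writing $a_2(f) = 3\,b_2(f)$, the content of the conjecture reduces to determining the residue of $b_2(f) \pmod{3\Zc[3]}$ as a function of $w \pmod 6$.

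One might first be tempted to use the Hecke recursion $a_2(f)^2 = a_4(f) + 2^{k-1}$ to convert the conjecture into a Kolberg-type statement $a_4(f) \equiv \sigma_{k-1}(4) \pmod{9\Zc[3]}$. However, since $(1+2^{k-1})^2 \equiv 0 \pmod{9}$ for every even $k$, that identity is equivalent to Hatada's mod $3$ statement and therefore carries no new information. The conjecture must genuinely be attacked at the level of $a_2$ itself. Accordingly I would apply Merel's level $1$ modular symbols together with the algorithm of \cref{section:algorithm} to the action of $T_2$, aiming to produce for each residue class of $k \pmod 6$ a polynomial identity in $\left(\Zmod{9}{}\right)[X,Y]$ whose consequence is precisely the conjectural value of $b_2(f) \pmod 3$. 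To make this a finite, unconditional computation I would invoke \cref{heckealggen} together with the weight bound $\deg f \leq 10$ from \cref{weightboundefmod9}, which circumscribes the set of weights where the Manin-symbol verifications must be carried out without any circular appeal to the conjectural mod $9$ finiteness.

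The hard part will be capturing the asymmetric nature of the conjecture. In the two residue classes $w \equiv 0, 4 \pmod 6$ the statement determines $a_2(f)$ only up to a sign (both $3$ and $6$ are permitted), whereas for $w \equiv 2 \pmod 6$ it enforces the strictly stronger vanishing $a_2(f) \equiv 0 \pmod{9\Zc[3]}$. This ``sign versus zero'' dichotomy has no counterpart among the linear congruences the author obtains for $T_3, T_5, T_7$, and it is not clear that a single modular-symbols identity in $\left(\Zmod{9}{}\right)[X,Y]$ is flexible enough to encode it. A successful proof may therefore need structural input beyond the computational framework of the paper, for instance from the reducible mod $3$ Galois representation $\bar\rho_f \cong \bar\rho_\Delta$ (whose lifts to mod $9$ are constrained by Eisenstein-type deformation conditions), or from a finer analysis of the semilocal decomposition of $\set{T}(3, \Zmod{9}{})$ at its unique Eisenstein maximal ideal. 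The status of \cref{a2conj} as an unresolved conjecture in the paper strongly suggests that this extra ingredient is not routine.
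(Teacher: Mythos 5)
This statement is \cref{a2conj}, which the paper explicitly labels and treats as an \emph{unproven conjecture}, not a theorem, and the author never offers a proof of it; your proposal is correspondingly not a proof either, but a (largely accurate) diagnosis of why one is hard. You correctly identify the two obstructions that the paper itself names at the end of \cref{section:algorithm}: for $w \equiv 0, 4 \pmod 6$ two distinct congruence classes of $a_2$ are predicted to occur, so \cref{modcongruence} cannot apply since it can only force a single value; and even for $w \equiv 2 \pmod 6$, where a single value $a_2 \equiv 0$ is predicted, the inclusion $T_2(\set{S}_{w+2}^+) \subset 9\set{S}_{w+2}^+$ fails, so the modular-symbols mechanism breaks down there as well. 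Your observation that the Hecke relation $a_4 = a_2^2 - 2^{k-1}$ combined with Hatada's mod $3$ divisibility makes the Kolberg-type statement $a_4 \equiv \sigma_{k-1}(4) \pmod 9$ vacuous (since both sides are forced to be $\equiv 0 \pmod 9$) is correct and a worthwhile remark, explaining why the conjecture really must be attacked at $a_2$ directly.

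There is, however, one concrete flaw in the proposal: the appeal to the weight bound $\deg f \leq 10$ from \cref{weightboundefmod9} is circular. That bound applies to \emph{weak} eigenforms, i.e.\ eigenforms already known to lie in $S(\Zmod{9}{})$. But the entire point of \cref{a2conj} and \cref{finitenessa2} is to establish that $a_2(\bar f)$ and $a_7(\bar f)$ land in $\Zmod{9}{} \subset R$, which via \cref{heckealggen} is precisely what is needed to conclude $\bar f \in S(\Zmod{9}{})$ in the first place. Without \cref{a2conj} you do not know that the mod $9$ reduction of a level $1$ eigenform is a weak eigenform, so you cannot use \cref{weightboundefmod9} to reduce the verification to a finite list of weights. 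The paper carefully avoids this circularity by only ever invoking \cref{weightboundefmod9} in the proof of \cref{finitenessa2}, which is \emph{conditional} on \cref{a2conj}. Any unconditional proof of \cref{a2conj} would therefore need a genuinely new ingredient — a lattice refinement, a deformation-theoretic argument, or some other structural input — as you ultimately concede.
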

Just as we did in \cref{section:finiteness2}, we will prove the following. 
\begin{theorem} \label{finitenessa2} \cref{a2conj} implies that there are only finitely many congruence classes $\pmod{9\Z[3]}$ of eigenforms of level $1$.
\end{theorem}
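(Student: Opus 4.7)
The plan is to mirror the proof of \cref{finiteness}, with \cref{a2conj} playing the role of the Coleman--Wan bound on $a_2$ and \cref{heckealggen}(ii) replacing \cref{heckealggen}(i). Let $f$ be a level $1$ eigenform of weight $k$, let $\sh{O}_f$ be a finite extension of $\Z[3]$ containing its Fourier coefficients, and set $R = \sh{O}_f/9\sh{O}_f$. The reduction $\bar f \in D(R)$ corresponds, via the pairing of \cref{section:swdc}, to an $R$-algebra homomorphism $\varphi : \set{T}(3, R) \to R$. By \cref{heckealggen}(ii), \cref{comparehecke}, and \cref{heckebasechange}, the algebra $\set{T}(3, R) = \set{T}(R)$ is generated as an $R$-algebra by $U$, $T_2$, $1+T_7$, and $t_\Lambda := [1+3]$; consequently each coefficient $a_n(\bar f) = \varphi(T_n)$ is expressible as a polynomial with coefficients in $\Zmod{9}{}$ in the four generator eigenvalues $a_3(\bar f), a_2(\bar f), 1+a_7(\bar f)$, and $\lambda = \varphi(t_\Lambda)$.

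The next step is to argue that each of these four quantities already lies in $\Zmod{9}{} \subset R$. Since $t_\Lambda$ acts on a weight-$k$ form by multiplication by $(1+3)^k$, we get $\lambda \in \{1,4,7\}$, determined by $k \bmod 3$. Assuming \cref{a2conj}, $a_2(\bar f) \in \{0,3,6\}$, determined by $k \bmod 6$. The analog of \cref{actiontmod32} for $p = 3$, which will be established in \cref{section:algorithm} by the same modular-symbols computation used there for $T_3$, $T_5$ mod $128$, expresses $a_3(f) \bmod 9\Zc[3]$ and $a_7(f) \bmod 9\Zc[3]$ as explicit functions of $k$ modulo a small fixed integer; in particular both $a_3(\bar f)$ and $a_7(\bar f)$ lie in $\Zmod{9}{} \subset R$ and take only finitely many values as $f$ varies. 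Note that Hatada's \cref{hatada3} alone only yields $a_3(\bar f) \in 3R$ and $1 + a_7(\bar f) \in 3R$, which is insufficient when the residue field of $\sh{O}_f$ is nontrivial; the point of the \cref{section:algorithm} computation is precisely to upgrade these mod $3$ congruences to congruences in $\Zmod{9}{}$.

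Putting the four constraints together, $\varphi$ factors through $\set{T}(3, \Zmod{9}{})$, so by \cref{dcbasechange} the reduction $\bar f$ lies in the image of $D(\Zmod{9}{}) \hookrightarrow D(R)$ and is a $dc$-weak eigenform with coefficients in $\Zmod{9}{}$. By \cref{allwefmod9} there are only finitely many such eigenforms (in fact at most $81$), and this bounds the number of congruence classes modulo $9\Zc[3]$ of level $1$ eigenforms. The main technical obstacle, deferred to \cref{section:algorithm}, is the explicit determination of $a_3(f)$ and $a_7(f)$ modulo $9\Zc[3]$ for every level $1$ eigenform; this is the computationally intensive part of the argument, carried out by describing the action of $T_3$ and $T_7$ on Merel's modular symbols in level $1$ and verifying a finite list of polynomial identities in $(\Zmod{9}{})[X,Y]$.
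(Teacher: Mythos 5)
Your proposal follows the paper's proof step for step in its overall structure: use \cref{heckealggen}(ii) and \cref{comparehecke} to see that $\set{T}(3,R)=\set{T}(R)$ is generated by $U$, $T_2$, $1+T_7$, $t_\Lambda$; show each of the four generator eigenvalues lies in $\Zmod{9}{}\subset R$; conclude that $\varphi$ factors through $\Zmod{9}{}$ so that $\bar f$ comes from a $dc$-weak eigenform over $\Zmod{9}{}$; and invoke \cref{allwefmod9} to get finiteness. The one substantive divergence is in how the congruence for $a_3$ is obtained. You correctly observe that \cref{hatada3} only gives $a_3(\bar f)\in 3R$, which is insufficient, and propose to upgrade this via the modular-symbols computation of \cref{section:algorithm} applied to $T_3$ mod $9$. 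The paper does not do this: it simply cites Congruence (6) of \cite{Ha79}, which already states $a_3(f)\equiv 0 \pmod{9\Zc[3]}$ for level $1$ eigenforms, and reserves the modular-symbols method exclusively for $T_7$ (\cref{actiont7}). Your alternative route is plausible but not free: it asserts, without verification, that $T_3$ satisfies an identity of the type in \cref{modcongruence} modulo $9$, and the paper explicitly warns at the end of \cref{section:algorithm} that for $p=3$ the method can fail even when only one congruence class is predicted (this happens, for instance, for $T_2$ when $w\equiv 2\pmod 6$). So you would need to actually run and check the algorithm for $T_3$, which adds unverified work that the paper avoids by pointing to an already-known sharper result of Hatada. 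Apart from this substitution, your argument and the paper's proof coincide.
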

\cref{finitenessa2} will follow from the following proposition.
\begin{proposition}\label{actiont7}If $f$ is a level $1$ eigenform of weight $k = w + 2$, then 
\[ a_7(f) \equiv \begin{cases}5 \pmod{9\Zc[3]} \indent\mbox{if } k \equiv 0 \pmod{6},\\ 8 \pmod{9\Zc[3]}\indent  \mbox{if } k \equiv 2 \pmod{6},\\2 \pmod{9\Zc[3]}\indent  \mbox{if } k \equiv 4 \pmod{6}. \end{cases}  \]
\end{proposition}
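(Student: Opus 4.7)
The proof follows the strategy outlined in \cref{section:algorithm}, in direct parallel with the proof of \cref{actiontmod32}. The plan is to translate the claim about $a_7(f) \pmod{9\Zc[3]}$ into a finite collection of polynomial identities in $(\Zmod{9}{})[X, Y]$ via Merel's theory of modular symbols, and then verify these identities by a direct Sage computation.

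Recall that in level $1$, Merel's theory identifies the space of weight-$k$ modular symbols with a quotient of the module of homogeneous polynomials of degree $k-2$ in the variables $X, Y$, modulo the standard Manin relations. The Hecke operator $T_7$ acts on such a symbol $P(X,Y)$ as an explicit sum of slashes by the $\sigma_1(7) = 8$ upper-triangular matrices of determinant $7$. If $f$ is a classical eigenform of weight $k$ with associated eigensymbol $P_f$, then $T_7 \cdot P_f \equiv a_7(f)\, P_f$ modulo the Manin relations. The key observation that makes the problem finite is that, modulo $9$, the weight $k$ enters this action only through powers $a^{k-1}, d^{k-1}$ with $a, d \in \{1, 7\}$; since both are coprime to $3$ and $\phi(9) = 6$, Fermat-Euler forces these powers, and hence the $T_7$-eigenvalue modulo $9$, to be periodic in $k$ with period dividing $6$. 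This matches the three cases $k \equiv 0, 2, 4 \pmod 6$ in the statement. The predicted eigenvalues $5, 8, 2 \pmod 9$ are precisely $1 + 7^{k-1} \pmod 9$ evaluated in each class, the natural $9$-adic lift of Hatada's congruence (\cref{hatada3}); they can also be read off directly from $\Delta$ (weight $12$) and the unique eigenforms of weights $14$ and $16$.

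It is worth emphasizing that \cref{allwefmod9} alone does not suffice: that proposition shows there are $81$ possible $dc$-weak eigenforms in $D(\Zmod{9}{})$, and once $\lambda = (1+3)^k \pmod 9$ is fixed by $k \pmod 3$, there are still $27$ possibilities for the tuple $(a_2, a_3, 1+a_7)$. To isolate the single value realised by strong eigenforms, a genuine modular-symbol calculation is required. The main obstacle is the bookkeeping: as $k$ varies within a residue class modulo $6$, the degree $k-2$ of the eigensymbol changes, so the polynomial identity expressing the eigenvalue relation has a $k$-dependent size. The algorithm of \cref{section:algorithm} shows how to encode this family of identities as a single finite identity in $(\Zmod{9}{})[X, Y]$ per residue class, exploiting the Fermat-Euler periodicity to collapse the $k$-dependence; once these three identities are in hand, their verification is a routine Sage computation.
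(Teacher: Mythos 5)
The overall strategy is right -- reduce to polynomial identities in $(\Zmod{9}{})[X,Y]$ via Merel's modular symbols, discover them computationally, and verify in Sage -- but the proposal is vague at exactly the point where the argument has content, and it gets several details wrong.

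First, the mechanism: you assert that the ``$T_7$-eigenvalue modulo $9$'' is ``periodic in $k$ with period dividing $6$'' by Fermat--Euler, but periodicity of the matrix entries of $T_7$ in the monomial basis does not imply anything about the eigenvalues, because the modular symbols space $\set{S}_{w+2}^+$ has growing dimension. What one actually has to prove is the operator congruence $(T_7 - c_{w_0}I)(\set{S}_{w+2}^+) \subset 9\set{S}_{w+2}^+$, which then forces \emph{every} eigenvalue of $T_7$ on $\set{S}_{w+2}^+ \otimes \C$ to be $\equiv c_{w_0} \pmod{9\Zc[3]}$. This is the content of \cref{modcongruence} in the paper, which you do not invoke. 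Establishing this operator congruence requires, for each generator $P_{i,w}=X^iY^{w-i}$ of $\set{S}_{w+2}^+$ (with $i$ even), an explicit expression of $T_7 P_{i,w} - c_{w_0}P_{i,w}$ as a $\Zmod{9}{}$-linear combination of Manin relations plus something in $9\Z[][X,Y]_w$. The Fermat--Euler periodicity of $(aX+bY)^i(cX+dY)^{w-i} \pmod 9$ in the pair $(i,w-i)$ is what lets one write down, for each residue class of $(i,w) \pmod 6$, a \emph{single} such expression with coefficients independent of $(i,w)$; but one still has to find and verify it.

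Second, a concrete miscount: you say ``once these three identities are in hand.'' In fact, as the paper explains, one needs one identity for each pair $(i_0,w_0) \in (2\Zmod{6}{})^2$, since the required relation depends on the residue of the exponent $i$ as well as of the weight $w$. That is $3 \times 3 = 9$ identities, which is what the paper states for \cref{actiont7}. Third, you describe $T_7$ as a sum over ``the $\sigma_1(7)=8$ upper-triangular matrices of determinant $7$''; the paper uses the Heilbronn--Merel set $\mathcal{HM}_7$ (with $a>b\geq0$, $d>c\geq0$, $ad-bc=7$), which contains non-upper-triangular matrices and is not counted by $\sigma_1$. These are not cosmetic: the lattice preservation argument in \cref{modcongruence} and the explicit Heilbronn--Merel action are what convert the heuristic ``periodicity'' intuition into a proof, and without them the proposal does not close.
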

\begin{proof} See \cref{section:algorithm}.
\end{proof}

\begin{proof}[Proof of \cref{finitenessa2}] Assume \cref{a2conj}. By Congruence (6) of \cite{Ha79}, we have $a_3(f)\equiv 0 \pmod{9\Zc[3]}$.

Let $\sh{O}_f$ be a finite extension of $\Z[3]$ containing the coefficients of $f$, and let $R = \sh{O}_f/9\sh{O}_f$. Let $\bar{f}$ be the image of $f$ in $D(R)$. In \cref{section:swdc} we saw that $\set{T}(R) = \set{T}(\Z[3])\otimes_{\Z[3]} R$ and that $\bar{f}$ corresponds to an $R$-algebra homomorphism $\varphi: \mor{\set{T}(R)}{R}$. By \cref{heckealggen}, $\set{T}(3, R)$ is generated by $T_2, 1+T_7, U$, and $t_\Lambda$. But $\set{T}(3, R) = \set{T}(R)$ by \cref{comparehecke}. Thus by \cref{heckealggen}, $a_n(f) = \varphi(T_n)$ is a polynomial with coefficients in $\Zmod{9}{}$ in $a_3(\bar{f}) = \varphi(U)$, $a_2(\bar{f}) = \varphi(T_2)$, $a_7(\bar{f}) = \varphi(T_7)$, and the eigenvalue $\lambda$ corresponding to $t_\Lambda = [1+3]$.

The eigenvalue $\lambda$ clearly lies in $\Zmod{9}{}$. By \cref{actiont7} and the assumption, $\{a_2(\bar{f}), a_7(\bar{f})\} \subset \Zmod{9}{}$. Thus $f \in S(\Zmod{9}{})$, and there are only finitely such eigenforms. 
\end{proof}

\section{Proving congruences for specific eigenvalues}\label{section:algorithm}
We will reduce the proof \cref{actiontmod32} and \cref{actiont7} to the verification of a finite number of polynomial identities. For \cref{actiontmod32}  we need $2048$ identities, and for \cref{actiont7} we need $9$ identities. In this section, we will describe an algorithm that can discover and verify the required identities. The algorithm relies on the theory of modular symbols, which has a simple presentation in the level $1$ case. The main reference for this theory is \cite{Me94}. 

Let $w \geq 0$ be an integer, and let $\Z[][X,Y]_w$ be the $\Z$-module of homogeneous polynomials in $X$ and $Y$ of degree $w$. Then $\Z[][X,Y]_w$ has a standard basis consisting of the monomials $\{X^i Y^{w-i}\}_{i = 0}^w$. If $\gamma = \mat{a}{b}{c}{d} \in M_2(\Z)$ is a $2 \times 2$ integral matrix and $P \in \Z[][X,Y]_w$, we let $\gamma$ act on $P$ on the right by
\[ P[\gamma] := P(aX + bY, cX + dY).  \]
This action extends to a right action of the monoid-ring $\Z[][M_2(\Z)]$ on $\Z[][X,Y]_w$.

Let
\[ \sigma := \mat{0}{-1}{1}{0}, \tau := \mat{0}{-1}{1}{-1}.  \]
Note that $\sigma$ has order $2$ and $\tau$ has order $3$. To simplify exposition, we will introduce the following notation. For $\gamma \in M_2(\Z)$ and $P\in M_2(\Z)$, write
\[ \cyc{P}{\gamma}{\tau} := P[\gamma + \gamma\tau + \gamma\tau^2],  \]
\[ \cyc{P}{\gamma}{\sigma} := P[\gamma + \gamma\sigma].  \]
Note that $\cyc{P}{\gamma}{\tau} = \cyc{P[\gamma]}{1}{\tau}$ and $\cyc{P}{\gamma}{\sigma} = \cyc{P[\gamma]}{1}{\sigma}$. 

The space of integral modular symbols of level $1$ and weight $w + 2$, denoted by $\set{M}_{w+2}$, is the quotient of $\Z[][X,Y]_w$ by the subgroup by
\[ \{\cyc{P}{1}{\sigma}, \cyc{P}{1}{\tau} : P \in \Z[][X,Y]_w\}  \]
and by any torsion. We denote by $\set{S}_{w+2}$ the subgroup of $\set{M}_{w+2}$ generated by the image of 
\[ \{X^i Y^{w-i}\}_{i=1}^{w-1} \bigcup \{X^w - Y^w\}. \]
We define an involution $\iota^\ast$ on $\set{M}_{w+2}$ by
\[ (\iota^\ast P)(X,Y) := -P(Y, X),  \]
and denote by $\set{S}_{w+2}^+$ the subspace of $\set{S}_{w+2}$ consisting of all elements fixed by $\iota^\ast$. 

Now we will describe the Hecke action on modular symbols. For each $n \geq 1$, we introduce the set $\mathcal{HM}_n$ of Heilbronn-Merel matrices of determinant $n$. These are given by
\[ \mathcal{HM}_n := \left\{\mat{a}{b}{c}{d} \in M_2(\Z) : a > b \geq 0, d > c \geq 0, ad-bc = n  \right\}.  \]

Merel (\cite{Me94}) showed that the Heilbronn-Merel matrices of determinant $n$ can be used to define an operator $T_n$ on modular symbols, given by
\[ T_n : \mor{\set{M}_{w+2}}{\set{M}_{w+2}},  \]
\[ P \mapsto \sum_{M \in \mathcal{HM}_n} P[M].  \]
This operator preserves the subspaces $\set{S}_{w+2}$ and $\set{S}_{w+2}^+$. The key fact that we need is the following theorem.
\begin{theorem}[\cite{Me94}]\label{heckeemb} There exists a perfect pairing 
\[ \langle -,-\rangle : \mor{S_{w+2}(\C) \times (\set{S}_{w+2}^+ \otimes \C)}{\C}     \] 
which is Hecke equivariant, in the sense that $\langle T_n f, P \rangle = \langle f, T_n P\rangle$ for all $n \geq 1$, $f \in S_{w+2}(\C)$, and $P \in \set{S}_{w+2}^+ \otimes \C$.  
\end{theorem}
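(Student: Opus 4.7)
The plan is to realize this as a consequence of the classical Eichler--Shimura theory in the formulation developed by Merel. First I would identify $\set{S}_{w+2} \otimes \C$ with the parabolic cohomology group $H^1_{\mathrm{par}}(\SL_2(\Z), V_w(\C))$, where $V_w(\C) = \C[X,Y]_w$ carries the right action described before the statement. The matrices $\sigma$ and $\tau$ are (up to sign) representatives of the torsion in $\mathrm{PSL}_2(\Z)$, and the relations $\cyc{P}{1}{\sigma}$ and $\cyc{P}{1}{\tau}$ are exactly Manin's relations for the parabolic cohomology of $\SL_2(\Z)$ with coefficients in $V_w$. The subspace $\set{S}_{w+2}$, singled out by $X^iY^{w-i}$ with $1 \le i \le w-1$ together with $X^w - Y^w$, is the cuspidal part (i.e., the kernel of the boundary map to the single cusp).

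Next I would define the pairing via period integration: for $f \in S_{w+2}(\C)$ and a modular symbol $[P]$ with $P \in \Z[X,Y]_w$, set
\[ \langle f, [P]\rangle := \int_0^{i\infty} f(z)\, P(z,1)\, dz. \]
Well-definedness on the quotient $\set{M}_{w+2}$ reduces to verifying $\langle f, \cyc{P}{1}{\sigma}\rangle = 0$ and $\langle f, \cyc{P}{1}{\tau}\rangle = 0$, both of which follow from the $\SL_2(\Z)$-invariance of the differential $f(z)(zX+Y)^w\, dz$ combined with cuspidality (which ensures convergence at $0$ and $i\infty$ and legitimises the contour manipulations coming from precomposition with $\sigma$ and $\tau$).

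For Hecke equivariance I would unwind both sides in terms of the defining sum over $\mathcal{HM}_n$. On the analytic side, $T_n f$ is classically a sum $\sum_M f|_M$ over matrices of determinant $n$, and the heart of \cite{Me94} is precisely that $\mathcal{HM}_n$ can be chosen as such a set of representatives so that, after a change of variables $z \mapsto Mz$ on each summand,
\[ \langle T_n f, [P]\rangle \;=\; \sum_{M \in \mathcal{HM}_n} \int_0^{i\infty} (f|_M)(z)\, P(z,1)\, dz \;=\; \sum_{M \in \mathcal{HM}_n} \int f(z)\, P[M](z,1)\, dz \;=\; \langle f, T_n [P]\rangle. \]
Perfectness of the pairing on the $+$-part then follows from the Eichler--Shimura isomorphism $H^1_{\mathrm{par}}(\SL_2(\Z), V_w(\C)) \cong S_{w+2}(\C) \oplus \overline{S_{w+2}(\C)}$ together with the identification of $\iota^\ast$ with complex conjugation on cohomology, which forces $\dim_\C (\set{S}_{w+2}^+ \otimes \C) = \dim_\C S_{w+2}(\C)$ and makes the restricted pairing non-degenerate.

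The main obstacle is the bookkeeping needed to match $\mathcal{HM}_n$ with a set of Hecke representatives and to verify the change-of-variables identity summand by summand; this is the technical core of \cite{Me94}. Once that matching is in place, everything else reduces either to Manin's presentation for cohomology or to standard Eichler--Shimura, so the main conceptual input really is Merel's observation about the Heilbronn--Merel family.
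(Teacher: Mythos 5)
The paper does not prove this theorem; it is stated with the citation to Merel's article and then used as a black box. Your sketch is a faithful reconstruction of the standard argument behind Merel's result: identify $\set{S}_{w+2}\otimes\C$ with parabolic cohomology $H^1_{\mathrm{par}}(\SL_2(\Z), V_w(\C))$ via Manin's relations, define the pairing by the Eichler--Shimura period integral $\int_0^{i\infty} f(z)\,P(z,1)\,dz$, obtain well-definedness on the quotient from the $\SL_2(\Z)$-invariance of $f(z)(zX+Y)^w\,dz$ together with the contour manipulations induced by $\sigma$ and $\tau$, and get Hecke equivariance from Merel's observation that $\mathcal{HM}_n$ is a legitimate set of representatives for the summand-by-summand change of variables. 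The dimension count via Eichler--Shimura and the action of $\iota^\ast$ then yields perfectness on the $+$-part. All of this is correct and is precisely the content the paper delegates to \cite{Me94}, so your sketch does not diverge from the paper's intent.

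One point worth spelling out if you expand the sketch: the paper's involution is $\iota^\ast(P)(X,Y) = -P(Y,X)$, which at first sight looks different from the more familiar $P(X,Y)\mapsto P(-X,Y)$. They coincide inside $\set{M}_{w+2}$: from the relation $\cyc{P}{1}{\sigma}=0$ one has $P(X,Y)=-P(-Y,X)$, so $-P(Y,X)=P(X,-Y)$ modulo the $\sigma$-relations, and the latter is the action of $\mat{1}{0}{0}{-1}$. Tracking this identification is needed to see that $\iota^\ast$ really corresponds to the involution coming from $z\mapsto -\bar z$ on the upper half plane, which is what your non-degeneracy argument on the $+$-eigenspace implicitly relies on.
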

As a consequence of \cref{heckeemb}, any eigenvalue of $T_n$ on $S_{w+2}(\C)$ must be an eigenvalue of $T_n$ on $\set{S}_{w+2}^+\otimes \C$. 

\begin{proposition}\label{modcongruence} Let $n, m, r$, and $c$ be integers such that $n, m, r \geq 1$. Suppose that for each integer $i$ such that $0 \leq i \leq w$ and $i \equiv 0 \pmod{2}$ there exist
\begin{itemize}
\item integers $\alpha_{1}, \ldots, \alpha_{r}$ and
\item matrices $\gamma_{1},\ldots,\gamma_{r} \in \GL_2\left(\Zmod{p}{m}\right)$ and $\epsilon_{1}, \ldots, \epsilon_{r} \in \{\sigma, \tau\}$
\end{itemize}
(all depending on $i$)
such that 
\[ T_n P_{i,w} - c P_{i,w} \equiv \sum_{j = 1}^{r} \alpha_j \cyc{P_{i,w}}{\gamma_j}{\epsilon_j} \pmod{p^m \Z[][X,Y]_w}  \]
where $P_{i,w} = X^i Y^{w-i}$. Then any eigenvalue $\lambda$ of $T_n$ on $\set{S}_{w+2}^+\otimes \C$ satisfies the congruence
\[ \lambda \equiv c \pmod{p^m \Zc[p]}.  \]
\end{proposition}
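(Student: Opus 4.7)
The plan rests on two observations: first, the cyclic terms $\cyc{Q}{\gamma}{\sigma}$ and $\cyc{Q}{\gamma}{\tau}$ appearing in the hypothesis vanish identically in $\set{M}_{w+2}$ by the very definition of modular symbols; second, the even-index monomials $\{P_{i,w}:i\text{ even}\}$ generate the $\iota^*$-fixed submodule $\set{M}_{w+2}^+$ over $\Z$, and this submodule is $p$-saturated in $\set{M}_{w+2}$ for every prime $p$.

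First I would lift each $\gamma_j\in\GL_2(\Zmod{p}{m})$ to a matrix $\tilde\gamma_j\in M_2(\Z)$ and rewrite the hypothesis as an equality in $\Z[X,Y]_w$ modulo $p^m$. Because $\cyc{P}{\gamma}{\epsilon}=\cyc{P[\gamma]}{1}{\epsilon}$ and $\cyc{Q}{1}{\sigma}$, $\cyc{Q}{1}{\tau}$ are precisely the defining relations of $\set{M}_{w+2}$, projecting from $\Z[X,Y]_w$ onto $\set{M}_{w+2}$ yields
\[ T_n P_{i,w}-c P_{i,w}\in p^m\,\set{M}_{w+2} \qquad\text{for every even } i\in[0,w]. \]

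Next I would identify the $\iota^*$-fixed part. The computation $P_{i,w}[\sigma]=(-1)^iP_{w-i,w}$ combined with the defining $\sigma$-relation $P+P[\sigma]=0$ in $\set{M}_{w+2}$ gives $\iota^* P_{i,w}=-P_{w-i,w}=(-1)^iP_{i,w}$. Expanding an arbitrary $\iota^*$-fixed $x=\sum_i a_i P_{i,w}$ in $\set{M}_{w+2}$ and equating $x=\iota^* x$ forces $2\sum_{i\text{ odd}}a_i P_{i,w}=0$; by torsion-freeness, $\sum_{i\text{ odd}}a_i P_{i,w}=0$, so $x=\sum_{i\text{ even}}a_i P_{i,w}$. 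The same manipulation applied to any $y=p^m z\in\set{M}_{w+2}^+\cap p^m\set{M}_{w+2}$ shows that $z$ itself is $\iota^*$-fixed, so $\set{M}_{w+2}^+$ is $p$-saturated in $\set{M}_{w+2}$ for every $p$; in particular $(T_n-c)P_{i,w}\in p^m\set{M}_{w+2}^+$ for every even $i$.

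To finish, these two conclusions combine to show $(T_n-c)(\set{M}_{w+2}^+\otimes\Z[p])\subset p^m(\set{M}_{w+2}^+\otimes\Z[p])$, since the even-index monomials generate $\set{M}_{w+2}^+$ over $\Z$. Writing $T_n-cI=p^m S$ with $S$ a $\Z[p]$-linear endomorphism of the free $\Z[p]$-module $\set{M}_{w+2}^+\otimes\Z[p]$, every eigenvalue of $T_n$ on $\set{M}_{w+2}^+\otimes\C$ has the form $c+p^m\mu$ where $\mu$ is an eigenvalue of $S$, hence an algebraic integer in $\Zc[p]$. Since $\set{S}_{w+2}^+\subset\set{M}_{w+2}^+$ and both are stable under $T_n$, any eigenvalue $\lambda$ of $T_n$ on $\set{S}_{w+2}^+\otimes\C$ also satisfies $\lambda\equiv c\pmod{p^m\Zc[p]}$. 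The main point I expect to need care with is the integrality/saturation: a priori one might fear that at $p=2$ the $\iota^*$-eigenspaces fail to split off as direct summands in $\set{M}_{w+2}\otimes\Z[2]$, but the torsion-freeness of $\set{M}_{w+2}$ together with the explicit formula $\iota^* P_{i,w}=(-1)^iP_{i,w}$ makes the argument go through uniformly in $p$.
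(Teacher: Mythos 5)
Your proof is correct, and it takes a genuinely different (and somewhat more economical) route than the paper's. The paper works directly with $\set{S}_{w+2}^+$: it first establishes the saturation $p^m\set{M}_{w+2}\cap\set{S}_{w+2}^+=p^m\set{S}_{w+2}^+$, which requires two torsion-freeness inputs — that $\set{M}_{w+2}$ is torsion-free (by construction) and that the Eisenstein quotient $\set{M}_{w+2}/\set{S}_{w+2}$ is torsion-free (cited from Stein, \S 8.4) — then identifies the even-index differences $X^iY^{w-i}-X^{w-i}Y^i$ as generators of $\set{S}_{w+2}^+$, and concludes. You instead prove the congruence for eigenvalues of $T_n$ on the larger module $\set{M}_{w+2}^+$ of $\iota^*$-invariants, show that $\set{M}_{w+2}^+$ is $p^m$-saturated in $\set{M}_{w+2}$ and is generated by the even monomials $P_{i,w}$ (using only torsion-freeness of $\set{M}_{w+2}$), and finally pass to the subspace $\set{S}_{w+2}^+\subset\set{M}_{w+2}^+$ by the elementary observation that eigenvalues on a $T_n$-stable subspace are eigenvalues on the ambient space. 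This sidesteps the reference to Stein's result on the torsion-freeness of $\set{M}_{w+2}/\set{S}_{w+2}$ entirely. The only small thing you should make explicit: the containment $(T_n-c)P_{i,w}\in\set{M}_{w+2}^+$ (needed before applying your saturation lemma) uses that $T_n$ preserves $\set{M}_{w+2}^+$, which follows from the standard commutation of $T_n$ with the star involution $\iota^*$; the paper invokes the analogous fact for $\set{S}_{w+2}^+$ when citing Merel, so this is a comparable input in both proofs.
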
 
\begin{proof} First, we will show that $p^m \set{M}_{w+2} \cap \set{S}_{w+2}^+ = p^m \set{S}_{w+2}^+$. Clearly, $p^m \set{S}_{w+2}^+ \subset p^m \set{M}_{w+2} \cap \set{S}_{w+2}^+$. Now let $P \in \set{S}_{w+2}^+$ and $Q \in \set{M}_{w+2}$ such that $P = p^m Q$. Applying $\iota^\ast$, we get $p^m \iota^\ast(Q) = p^m Q$. Since $\set{M}_{w+2}$ is torsion-free, we find that $\iota^\ast(Q) = Q$. 
Furthermore, $p^m$ annihilates the image of $Q$ in $\set{M}_{w+2} / \set{S}_{w+2}$, which is also torsion-free (\cite{St07}, \S 8.4). Thus $Q \in \set{S}_{w+2}$. Therefore $Q \in \set{S}_{w+2}^+$ and $P \in p^m \set{S}_{w+2}^+$.  

It is clear from the definition of the involution $\iota^\ast$ that $\set{S}_{w+2}^+$ is generated by the image of 
\[ \{X^i Y^{w-i} - X^{w-i} Y^i : 0 \leq i \leq w.    \}  \]
But for $i$ odd, we have the relation
\[ \cyc{X^i Y^{w-i}}{1}{\sigma} = X^i Y^{w-i} - X^{w-i} Y^i = 0   \]
in $\set{M}_{w+2}$. Therefore $\set{S}_{w+2}^+$ is generated by the image of
\[ \{X^i Y^{w-i} - X^{w-i} Y^i : 0 \leq i \leq w \mbox{ and } i \equiv 0 \pmod{2}    \}.   \]
The assumption now implies that
\[ (T_n - cI)(\set{S}_{w+2}^+) \subset p^m \set{M}_{w+2} \cap \set{S}_{w+2}^+ = p^m \set{S}_{w+2}^+   \]
where $I$ is the identity operator. So the operator $T_n - cI$ acting on $\set{S}_{w+2}^+ \otimes \C$ can be represented by a matrix with entries in $p^m \Z$. Therefore any eigenvalue $\lambda$ of $T_n$ on $\set{S}_{w+2}^+$ satisfies $\lambda - c \in p^m \Zc[p]$. 
\end{proof}
We are ready to describe the algorithm. The input of the algorithm will be
\begin{itemize}
\item a prime $\ell$, 
\item a prime $p$, 
\item an exponent $m$, 
\item integers $c_{w_0}$ for each $w_0 \in 2\Zmod{\phi(p^m)}{}$ (where $\phi$ is the Euler totient function, so that $\phi(p^m) = p^{m-1}(p-1)$),
\end{itemize}
which corresponds to conjectured congruences
\[ a_\ell(f) \equiv c_{w_0} \pmod{p^m \Zc[p]}   \]
for all eigenforms $f$ of level $1$ and weight $w+2$ where $w \equiv w_0 \pmod{\phi(p^m)}$. 
If successful, the algorithm produces a list of propositions. There will be exactly one proposition for each $(i_0, w_0) \in \left(2\Zmod{\phi(p^m)}{}\right)^2$, which will be of the form
\begin{proposition}\label{templateprop} If $P_{i,w} = X^iY^{w-i}$ where $i$ is even, $i \equiv i_0 \pmod{\phi(p^m)}$, and $w \equiv w_0 \pmod{\phi(p^m)}$ then
\[ T_\ell P_{i,w} - c_{w_0} P_{i,w} \equiv \sum_{j= 1}^{r} \alpha_{j} \cyc{P_{i,w}}{\gamma_{j}}{\epsilon_{j}} \pmod{p^m\Z[][X,Y]_w}. \]
\end{proposition}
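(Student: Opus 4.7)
The plan is to treat the statement as a direct polynomial identity in $\left(\Zmod{p}{m}\right)[X,Y]_w$ and verify it coefficient by coefficient. First I would expand the LHS using Merel's formula, $T_\ell P_{i,w} = \sum_{M \in \mathcal{HM}_\ell}(a_M X + b_M Y)^i(c_M X + d_M Y)^{w-i}$, so that the binomial theorem writes the coefficient of each monomial $X^sY^{w-s}$ as a specific $\Z$-linear combination of products $\binom{i}{k}\binom{w-i}{l}a_M^k b_M^{i-k}c_M^l d_M^{w-i-l}$ with $k+l=s$. Subtracting $c_{w_0}P_{i,w}$ only alters the coefficient of $X^iY^{w-i}$. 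On the RHS, each $\cyc{P_{i,w}}{\gamma_j}{\tau}$ expands as three analogous matrix substitutions and each $\cyc{P_{i,w}}{\gamma_j}{\sigma}$ as two, so matching monomials on both sides reduces the claim to finitely many congruences modulo $p^m$ between such coefficient sums.

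To propagate the identity across the entire residue class $(i,w) \equiv (i_0,w_0) \pmod{\phi(p^m)}$, I would rely on two elementary facts. First, for a unit $e \in \units{\left(\Zmod{p}{m}\right)}$, Euler's theorem gives $e^j \equiv e^{j \bmod \phi(p^m)} \pmod{p^m}$, so all factors with $p$-coprime bases are automatically periodic in their exponents. Second, if $p \mid e$ then $e^j \equiv 0 \pmod{p^m}$ for $j \geq m$, so only bounded-degree contributions survive from such factors. Because each $\gamma_j$ lies in $\GL_2(\Zmod{p}{m})$ and the Heilbronn-Merel matrices have determinant $\ell$ coprime to $p$ (in the cases of interest, $p \in \{2,3\}$ and $\ell \in \{3,5,7\}$), every row and column of every relevant matrix contains a unit entry, bounding the number of ``exceptional'' monomial contributions. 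The algorithmic choice of $(\alpha_j,\gamma_j,\epsilon_j)$ is then arranged so that the residual binomial-dependent contributions cancel between the two sides; this cancellation is itself a finite polynomial identity (interpreting $\binom{i}{k}$ and $\binom{w-i}{l}$ as explicit polynomials in $i,w$) that can be checked symbolically over $\Zmod{p}{m}$.

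The main obstacle is the \emph{discovery} of the tuple $(\alpha_j,\gamma_j,\epsilon_j)$ rather than the verification. A brute-force search over $\GL_2(\Zmod{p}{m})$ is infeasible for large $p^m$; a more practical strategy would be to seed the search from small-entry lifts of matrices in the $\sigma$- and $\tau$-orbits of $\mathcal{HM}_\ell$, then solve for the $\alpha_j$ by linear algebra over $\Zmod{p}{m}$. Once a candidate tuple is produced, validating it is a short, deterministic polynomial comparison that can be automated in Sage for each of the finitely many pairs $(i_0,w_0)$ output by the algorithm.
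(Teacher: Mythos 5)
Your overall plan (reduce to finitely many explicit polynomial identities, then verify in Sage) matches the paper's goal, but the route you take to finiteness does not go through. The problem is in the step where you expand each term $(a_MX+b_MY)^i(c_MX+d_MY)^{w-i}$ into monomials and then claim the resulting coefficient identities can be checked as ``a finite polynomial identity'' in $i,w$. The coefficient of $X^sY^{w-s}$ in such a term is a sum $\sum_{k+l=s}\binom{i}{k}\binom{w-i}{l}a^kb^{i-k}c^ld^{w-i-l}$ over $k$ ranging up to $\min(s,i)$, and $s$ itself ranges up to $w$. Your two reductions — Euler's theorem for unit entries and vanishing of $e^j$ for $p\mid e$, $j\geq m$ — do bound the number of surviving terms when a row of the matrix has a nonunit entry, but several of the Heilbronn--Merel matrices and of the $\gamma_j$ have a row consisting of two $p$-adic units (e.g.\ the row $(2,1)$ of $\left(\begin{smallmatrix}2&1\\0&1\end{smallmatrix}\right)\in\mathcal{HM}_2$ when $p=3$). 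In that case no term in $\sum_k\binom{i}{k}a^kb^{i-k}$ drops out mod $p^m$, so there is no a priori bound on the set of indices $k$, and ``$\binom{i}{k}$ as an explicit polynomial in $i$'' has degree growing with $k$. Moreover $\binom{i}{k}\pmod{p^m}$ is not periodic in $i$ (or $k$) with period $\phi(p^m)$ — the period grows with $k$ — so the congruence-class propagation does not reduce to a fixed polynomial congruence. Nothing in the proposal controls this, and I do not see how to salvage the monomial-by-monomial route without essentially reinventing what the paper does.

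The paper sidesteps the binomial coefficients entirely by never fully expanding. Each term $(aX+bY)^{i'}(cX+dY)^{w'-i'}$ is factored as a \emph{bounded-degree} part $(aX+bY)^{i_0}(cX+dY)^{w_0-i_0}$ (which is expanded into monomials $X^{e}Y^{f}$ with $e,f\leq w_0$) times a \emph{variable} part $(aX+bY)^{i'-i_0}(cX+dY)^{(w'-w_0)-(i'-i_0)}$, whose exponents are divisible by $\phi(p^m)$. The variable parts are \emph{not} expanded; instead the paper reduces $a,b,c,d$ to a canonical representative modulo a power of $p$ determined by valuation conditions (conditions (i)--(vi) in \cref{section:algorithm}) — this is exactly where your Euler/vanishing observations enter, but applied to the coefficients of the factored form rather than to the scalar entries of a binomial expansion. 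After canonicalization the whole identity takes the form $\sum_j Q_j(X,Y)\,(a_jX+b_jY)^{i'-i_0}(c_jX+d_jY)^{(w'-w_0)-(i'-i_0)}\equiv 0$ with bounded-degree $Q_j$ independent of $i',w'$ and the tuples $(a_j,b_j,c_j,d_j)$ distinct, and the machine simply checks that every $Q_j$ is the zero polynomial — a finite computation with no binomial coefficients in sight. Your discovery step (seed from small lifts, solve for $\alpha_j$ by linear algebra) is reasonable and close in spirit to the paper's search over $L_{i,w}=\{\cyc{P_{i,w}}{\gamma}{\sigma},\cyc{P_{i,w}}{\gamma}{\tau}:\gamma\in G\}$; the missing ingredient is the correct \emph{verification} device, which is the factor-and-canonicalize reduction, not the monomial expansion.
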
 
where
\begin{itemize}
\item $\alpha_{1}, \ldots, \alpha_{r}$ are explicitly given integers depending on $(i_0, w_0)$,   
\item $\gamma_{1},\ldots,\gamma_{r} \in \GL_2\left(\Zmod{p}{m}\right)$ and $\epsilon_1, \ldots, \epsilon_{r} \in \{\sigma, \tau\}$ are explicitly given matrices depending on $(i_0, w_0)$,
\item $w \in 2\Z[\geq 0]$ and $i \in \{0, \ldots, w\}$ will be variables.
\end{itemize} 
Here \cref{templateprop} is just a template (see \cref{explicitexample} for an explicit example). By \cref{heckeemb} and \cref{modcongruence}, the existence of these propositions would imply the desired congruences. 

Now we will explain the algorithm. Let $G$ be the group of matrices in $\GL_2(\Zmod{p}{m})$ generated by the mod $p^m$ reductions of 
\[ \sigma, \tau,  \mat{\ell}{0}{0}{1}, \mat{\ell}{0}{0}{\ell}. \] 
Fix $(i_0, w_0) \in \left(2\Zmod{\phi(p^m)}{} \right)$ and let $i\geq 0$ and $w \geq 0$ vary within the mod $\phi(p^m)$ congruence classes corresponding respectively to $i_0$ and $w_0$. For each given $i$ and $w$, we put $P_{i,w} = X^i Y^{w-i}$ and we build a list of relations 
\[L_{i,w} = \{\cyc{P_{i,w}}{\gamma}{\sigma}, \cyc{P_{i,w}}{\gamma}{\tau}: \gamma \in G\}.\] 
We then use matrix linear algebra over $\Zmod{p}{m}$ to try to solve the equation expressing $T_\ell P_{i,w} - c_{w_0}P_{i,w}$ as a linear combination of elements of $L_{i,w}$. Suppose that a solution
\[T_\ell P_{i,w} - c_{w_0}P_{i,w} \equiv \sum_{j=1}^{r} \alpha_j \cyc{P_{i,w}}{\gamma_j}{\epsilon_j}  \pmod{p^m\Z[][X,Y]}\]
is found. We want to check whether this solution is ``universal", i.e.\ whether
\[ \tag{$\dagger\dagger$}\label{asolution}T_\ell P_{i',w'} - c_{w_0} P_{i',w'} \equiv \sum_{j=1}^{r} \alpha_j \cyc{P_{i',w'}}{\gamma_j}{\epsilon_j} \pmod{p^m\Z[][X,Y]}\]
for all $i' \equiv i_0 \pmod{\phi(p^m)}$ and $w' \equiv w_0 \pmod{\phi(p^m)}$. This is a polynomial identity that could be checked by hand, but given the amount of identities one needs to check, it is better to automate the verification. We do this by transforming \ref{asolution} into a more ``canonical" form. Both the left and the right hand side of \ref{asolution} are $\Zmod{p}{m}$-linear combinations of terms of the form
\[ (aX + bY)^{i'} (cX + dY)^{w' - i'}    \]We rewrite each such term as
\[ (a X + b Y)^{i_0}(c X + d Y)^{(w_0 - i_0)}(aX + b Y)^{i'-i_0} (c X + dY)^{(w'-w_0)-(i'-i_0)}  \]
and then expand the factor $(a X + b Y)^{i_0}(c X + d Y)^{(w_0 - i_0)}$ to obtain
\[ (aX + bY)^{i'} (cX + dY)^{w' - i'} \]\[\equiv \sum_j \lambda_j X^{e_j} Y^{f_j} (aX + bY)^{i'-i_0} (cX + dY)^{(w' - w_0) - (i'-i_0)} \pmod{p^m \Z[][X,Y]_w}.    \]
The coefficients $\lambda_j$ and exponents $e_j$ and $f_j$ do not depend on $i$ or $w$. Moreover, since $i'-i_0 \equiv w'-w_0 \equiv 0 \pmod{\phi(p^m)}$, we can change the coefficients $a,b,c,d$ so that
\begin{enumerate}[(i)] 
\item $0\leq a, b \leq p^v$, where $v = \max\{1, m - v_p(i-i_0) - v_p(\lambda_j)\}$,
\item $0\leq c, d \leq p^v$, where $v = \max\{1, m - v_p((w-w_0)-(i-i_0)) - v_p(\lambda_j)\}$,
\item if $a$ is a unit, then $a = 1$,
\item if $a$ is not a unit but $b$ is a unit, then $b = 1$,
\item if $c$ is a unit, then $c = 1$,
\item if $c$ is not a unit but $d$ is a unit, then $d = 1$.
\end{enumerate}
After applying the above transformation to every term on both sides and collecting terms, we can rewrite \ref{asolution} as
\[ \sum_j Q_j(X,Y) (a_j X + b_j Y)^{i'-i_0}(c_jX + d_jY)^{(w'-w_0) - (i'-i_0)} \equiv 0 \pmod{p^m \Z[][X,Y]_{w}}   \]
where the polynomials $Q_j$ and the numbers $a_j, b_j, c_j, d_j$ do not depend on $i$ and $w$, the numbers $a_j, b_j, c_j, d_j$ satisfy the conditions (i)-(vi), and the quadruples $(a_j, b_j, c_j, d_j)$ are distinct for distinct values of $j$. Therefore, to verify \ref{asolution}, it is enough for the computer to check that $Q_j = 0$ for all $j$. 

We can now give the algorithm in pseudo-code. 
\begin{list}{$\circ$}{} 
\item $G = \left\langle \sigma, \tau,  \mat{\ell}{0}{0}{1}, \mat{\ell}{0}{0}{\ell} \right\rangle$.
\item For $(i_0, w_0) \in \left(2\Zmod{\phi(p^m)}{} \right)$:
	\begin{list}{$\circ$}{} 
	\item For $i \equiv i_0 \pmod{\phi(p^m)}$ and $w \equiv w_0 \pmod{\phi(p^m)}$:
		\begin{list}{$\circ$}{} 
		\item $P_{i,w} = X^i Y^{w-i}$.
		\item $L_{i,w} = \{\cyc{P_{i,w}}{\gamma}{\tau}, \cyc{P_{i,w}}{\gamma}{\sigma}: \gamma \in G\}$, expressed in terms of $\{X^j Y^{w-j}  \}$.
		\item Express $T_\ell P_{i,w} - c_{w_0}P_{i,w}$ in terms of $\{X^j Y^{w-j}  \}$.
		\item Solve for $Q \in \left(\Zmod{p}{m} \right)[L_{i,w}]$ such that $T_\ell P_{i,w} - c_{w_0}P_{i,w} \equiv Q \pmod{p^m\Z[][X,Y]_w}$. If no solution is found, then the algorithm has failed and we halt.
		\item If $Q$ is a universal solution for $(i_0, w_0)$, break loop and move to next couple $(i_0, w_0)$.
		\end{list}
	\end{list}
\end{list}
This algorithm is not guaranteed to terminate or to find the required solutions. But if it executes successfully, it provides the needed polynomial identities to prove the desired congruence. The code, written in Sage, can be found on the author's website (\cite{Data}), along with computer-generated pdf files containing the propositions of the form \cref{templateprop}, and the corresponding solutions stored as Sage objects. 

This method fails to prove congruences in situations where there is more than one congruence class of eigenforms corresponding to the same congruence class of the weight. For example, when $p = 3$ and $w \equiv 0$ or $4 \pmod{6}$, it is expected that eigenvalues $a_2$ of $T_2$ at weight $w+2$ satisfy $a_2 \equiv 3$ or $6 \pmod{9\Zc[3]}$ and that both congruence classes occur. A similar situation occurs for congruences modulo $2^m \Zc[2]$ when $m \geq 8$. In such a situation, \cref{modcongruence} cannot be applied.

The method can also fail even when there only one predicted congruence class. For example, when $p = 3$ and $w \equiv 2 \pmod{6}$, \cref{a2conj} predicts that $a_2 \equiv 0 \pmod{9\Zc[3]}$. However, it is not true that $T_2(\set{S}_{w+2}^+) \subset 9\set{S}_{w+2}^+$.

%%%%%%%%%%%%%%%%%%%%%%%%%%%%%%%%%%%%%%%%%%%%%%%%%%%
\appendix
\section{Example} We will illustrate the method presented in \cref{section:algorithm} by a small example. Let $p = 3$ and consider the operator $T_2$. Hatada's theorem \cref{hatada3} says that the eigenvalues of $T_2$ on level $1$ cuspforms are divisible by $3$. We will prove this using our method. The action of $T_2$ on $P = X^i Y^{w-i}$ is given by
\[ T_2 P = P\left[\gamma_1 + \gamma_2 + \gamma_3 + \gamma_4   \right]  \]
where
\[ \gamma_1 = \mat{2}{0}{0}{1}, \gamma_2 = \mat{1}{0}{0}{2}, \gamma_3 = \mat{2}{1}{0}{1}, \gamma_4 = \mat{1}{0}{1}{2}. \]
The algorithm gives us the following proposition.
\begin{proposition}\label{explicitexample}\leavevmode
\begin{enumerate}[(i)]
\item If $P = X^i Y^{w-i}$ with $i \equiv 0 \pmod{2}$ and $w \equiv 0 \pmod{2}$ then \[ T_{2}P \equiv   2\cyc{P}{\mat{1}{0}{0}{1}}{\sigma}  + \cyc{P}{\mat{0}{1}{2}{0}}{\tau}  \indent \pmod{3\Z[][X,Y]}.\]
\item If $P = X^i Y^{w-i}$ with $i \equiv 1 \pmod{2}$ and $w \equiv 0 \pmod{2}$ then \[ T_{2}P \equiv   \cyc{P}{\mat{1}{0}{0}{1}}{\sigma}  + 2\cyc{P}{\mat{0}{1}{2}{0}}{\tau}  \indent \pmod{3\Z[][X,Y]}.\]
\end{enumerate}
\end{proposition}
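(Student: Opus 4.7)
The statement consists of two polynomial identities in $(\Z/3\Z)[X,Y]$, one per parity class of $i$ (with $w$ held even). The plan is a direct verification: expand both sides, reduce modulo $3$, and compare monomial by monomial.

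First I would expand $T_2 P$ using the four Heilbronn-Merel matrices $\gamma_1,\ldots,\gamma_4$ given in the preamble, obtaining
\[ T_2 P = 2^i X^i Y^{w-i} + 2^{w-i} X^i Y^{w-i} + (2X+Y)^i Y^{w-i} + X^i(X+2Y)^{w-i}. \]
Reducing modulo $3$ via $2 \equiv -1$ replaces $2^i, 2^{w-i}$ by $(-1)^i, (-1)^{w-i}$ and the linear forms $2X+Y, X+2Y$ by $Y-X, X-Y$.

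Next I would expand the right-hand side. The $\sigma$-cycle is immediate: $\cyc{P}{\mat{1}{0}{0}{1}}{\sigma} = P + P[\sigma] = X^i Y^{w-i} + (-1)^i X^{w-i} Y^i$. For the $\tau$-cycle, I first compute
\[ \mat{0}{1}{2}{0}\tau = \mat{1}{-1}{0}{-2}, \qquad \mat{0}{1}{2}{0}\tau^2 = \mat{-1}{0}{-2}{2}, \]
and expand $\cyc{P}{\mat{0}{1}{2}{0}}{\tau}$ as a sum of the three resulting substitutions. Using $2 \equiv -1$ and $-2 \equiv 1$ modulo $3$, each substitution reduces to a single one of the monomials $X^{w-i} Y^i$, $(X-Y)^i Y^{w-i}$, or $X^i (X-Y)^{w-i}$, up to a sign depending on the parities of $i$ and $w-i$.

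To conclude I specialise to the stated parity classes. In case (i), $i$ and $w-i$ are both even, so every sign factor $(-1)^{\cdot}$ collapses to $+1$ and $(Y-X)^i = (X-Y)^i$; the two $X^{w-i}Y^i$-contributions on the right (one from the $\sigma$-cycle with coefficient $2$, one from the $\tau$-cycle with coefficient $1$) combine with total coefficient $2+1 \equiv 0 \pmod 3$, and the remaining terms match the reduced form of $T_2 P$ directly. In case (ii), $i$ and $w-i$ are odd, flipping three of the relevant sign factors; these flips precisely account for the swap of coefficients from $(2,1)$ to $(1,2)$ on the two cycles, and once again the $X^{w-i}Y^i$-contributions cancel modulo $3$ while the remaining monomials agree. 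The only real obstacle is careful bookkeeping of the sign factors $(-1)^i$ and $(-1)^{w-i}$; beyond that the verification is a finite, routine calculation.
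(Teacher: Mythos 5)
Your proposal is correct and takes essentially the same approach as the paper's own proof: expand both sides using the Heilbronn--Merel matrices and the cycle notation, reduce modulo $3$ via $2\equiv -1$ and $-2\equiv 1$, and match the surviving monomial types $X^iY^{w-i}$, $X^{w-i}Y^i$, $(X-Y)^iY^{w-i}$, $X^i(X-Y)^{w-i}$ term by term, noting that the $X^{w-i}Y^i$-contributions from the $\sigma$- and $\tau$-cycles cancel mod $3$ in each parity case.
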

Let us prove (ii). Assume $i \equiv w-i \equiv 1 \pmod{2}$. The right hand side is
\[ (X^iY^{w-i})\left[ 1 + \mat{0}{-1}{1}{0} - \mat{0}{1}{2}{0} - \mat{1}{2}{0}{1}  - \mat{2}{0}{1}{2} \right]  \]
\[ = (X)^i (Y)^{w-i} + (-Y)^i (X)^{w-i} - (Y)^i(-X)^{w-i} - (X-Y)^i(Y)^{w-i} - (-X)^i(X-Y)^{w-i}   \]
\[ = XY(X)^{i-1}(Y)^{w-i-1} + (-XY + Y^2)(X-Y)^{i-1}Y^{w-i-1} + (X^2 - XY)(X)^{i-1}(X-Y)^{w-i-1}.  \]
The left hand is
\[ (X^iY^{w-i})\left[\mat{2}{0}{0}{1} + \mat{1}{0}{0}{2} + \mat{2}{1}{0}{1} + \mat{1}{0}{1}{2} \right]  \]
\[ = (-X)^i(Y)^{w-i} + (X)^i(-Y)^{w-i} + (-X+Y)^i(Y)^{w-i} + (X)^i(X-Y)^{w-i}  \]
\[ =  XY(X)^{i-1} Y^{w-i-1} + (-XY + Y^2)(X-Y)^{i-1}(Y)^{w-i-1} + (X^2 - XY)(X)^{i-1}(X-Y)^{w-i-1}.  \]
Thus $(ii)$ is proven. The statement $(i)$ is done similarly. 
%%%%%%%%%%%%%%%%%%%%%%%%%%%%%%%%%%%%%%%%%%%%%%%%%%%%%%%%%%%%%%%%%%%%
\section*{Acknowledgements} The author would like to thank Ian Kiming and Gabor Wiese for many interesting discussions on the topic of eigenforms modulo prime powers over the year, as well as Shaunak Deo and Ming-Lun Hsieh for helpful discussions and comments on this work. The author would also like to thank the anonymous referee for thoroughly reading the manuscript and providing numerous valuable comments
and suggestions, in particular suggesting how to fill an earlier gap in the proof of \cref{modcongruence}.

All relevant computations were done using Sage. This research was supported by a Postdoctoral Fellowship at the National Center for Theoretical Sciences, Taipei, Taiwan.

\providecommand{\bysame}{\leavevmode\hbox to3em{\hrulefill}\thinspace}
\providecommand{\MR}{\relax\ifhmode\unskip\space\fi MR }
% \MRhref is called by the amsart/book/proc definition of \MR.
\providecommand{\MRhref}[2]{%
  \href{http://www.ams.org/mathscinet-getitem?mr=#1}{#2}
}
\providecommand{\href}[2]{#2}


\begin{thebibliography}{KRW16}

\bibitem[Ash68]{As68}
M.~H. Ashworth, \emph{Congruence and identical properties of modular forms}, D.\ Phil. Thesis, Oxford (1968).
  
\bibitem[BC47]{BC47}
R.~P. Bambah and S.~Chowla, \emph{The residue of {R}amanujan's function
  {$\tau(n)$} to the modulus {$2^8$}}, J. London Math. Soc. \textbf{22} (1947),
  140--147.

\bibitem[Bc12]{Be12}
Jo\"el Bella{\"i}che, \emph{Pseudodeformations}, Math. Z. \textbf{270} (2012),
  no.~3-4, 1163--1180.

\bibitem[BK15]{BK15}
Jo\"el Bella{\"i}che and Chandrashekhar Khare, \emph{Level 1 {H}ecke algebras
  of modular forms modulo {$p$}}, Compos. Math. \textbf{151} (2015), no.~3,
  397--415.

\bibitem[Buz05]{Bu05}
Kevin Buzzard, \emph{Questions about slopes of modular forms}, Ast\'erisque
  (2005), no.~298, 1--15, Automorphic forms. I.

\bibitem[CE04]{CE04}
Frank Calegari and Matthew Emerton, \emph{The {H}ecke algebra {$T\sb k$} has
  large index}, Math. Res. Lett. \textbf{11} (2004), no.~1, 125--137.

\bibitem[Che14]{Ch14}
Ga\"etan Chenevier, \emph{The {$p$}-adic analytic space of pseudocharacters of
  a profinite group and pseudorepresentations over arbitrary rings},
  Automorphic forms and {G}alois representations. {V}ol. 1, London Math. Soc.
  Lecture Note Ser., vol. 414, Cambridge Univ. Press, Cambridge, 2014,
  pp.~221--285.

\bibitem[CKW13]{CKW13}
Imin Chen, Ian Kiming, and Gabor Wiese, \emph{On modular {G}alois
  representations modulo prime powers}, Int. J. Number Theory \textbf{9}
  (2013), no.~1, 91--113.

\bibitem[CS04]{CS04}
Robert~F. Coleman and William~A. Stein, \emph{Approximation of eigenforms of
  infinite slope by eigenforms of finite slope}, Geometric aspects of {D}work
  theory. {V}ol. {I}, {II}, Walter de Gruyter, Berlin, 2004, pp.~437--449.
  \MR{2023296}

\bibitem[Del71]{De71}
Pierre Deligne, \emph{Formes modulaires et repr\'esentations {$l$}-adiques},
  S\'eminaire {B}ourbaki. {V}ol. 1968/69: {E}xpos\'es 347--363, Lecture Notes
  in Math., vol. 175, Springer, Berlin, 1971, pp.~Exp.\ No.\ 355, 139--172.
  \MR{3077124}

\bibitem[Del75]{De75}
\bysame, \emph{Courbes elliptiques: formulaire d'apr\`es {J}. {T}ate}, Modular
  functions of one variable, {IV} ({P}roc. {I}nternat. {S}ummer {S}chool,
  {U}niv. {A}ntwerp, {A}ntwerp, 1972) (1975), 53--73. Lecture Notes in Math.,
  Vol. 476.

\bibitem[Deo17]{Deo17}
Shaunak~V. Deo, \emph{Structure of {H}ecke algebras of modular forms modulo
  {$p$}}, Algebra Number Theory \textbf{11} (2017), no.~1, 1--38.

\bibitem[DS74]{DS74}
Pierre Deligne and Jean-Pierre Serre, \emph{Formes modulaires de poids {$1$}},
  Ann. Sci. \'Ecole Norm. Sup. (4) \textbf{7} (1974), 507--530 (1975).

\bibitem[Eme11a]{Em11a}
Matthew Emerton, \emph{Local-global compatibility in the $p$-adic
Langlands programme for $\GL_{2/\Q}$}, \url{http://www.math.uchicago.edu/~emerton/pdffiles/lg.pdf}, 2011.
  
\bibitem[Eme11b]{Em11b}
\bysame, \emph{{$p$}-adic families of modular forms (after {H}ida,
  {C}oleman, and {M}azur)}, Ast\'erisque (2011), no.~339, Exp. No. 1013, vii,
  31--61, S\'eminaire Bourbaki. Vol. 2009/2010. Expos\'es 1012--1026.

\bibitem[FM95]{FM95}
Jean-Marc Fontaine and Barry Mazur, \emph{Geometric {G}alois representations},
  Elliptic curves, modular forms, \& {F}ermat's last theorem ({H}ong {K}ong,
  1993), Ser. Number Theory, I, Int. Press, Cambridge, MA, 1995, pp.~41--78.

\bibitem[GM98]{GM98}
Fernando~Q. Gouv\^ea and Barry Mazur, \emph{On the density of modular
  representations}, Computational perspectives on number theory ({C}hicago,
  {IL}, 1995), AMS/IP Stud. Adv. Math., vol.~7, Amer. Math. Soc., Providence,
  RI, 1998, pp.~127--142.

\bibitem[Gou88]{Go88}
Fernando~Q. Gouv\^ea, \emph{Arithmetic of {$p$}-adic modular forms}, Lecture
  Notes in Mathematics, vol. 1304, Springer-Verlag, Berlin, 1988.

\bibitem[Hat79]{Ha79}
Kazuyuki Hatada, \emph{Eigenvalues of {H}ecke operators on {${\rm SL}(2,\,{\bf
  Z})$}}, Math. Ann. \textbf{239} (1979), no.~1, 75--96.

\bibitem[Hat81]{Ha81}
\bysame, \emph{Congruences for eigenvalues of {H}ecke operators on {${\rm
  SL}_{2}({\bf Z})$}}, Manuscripta Math. \textbf{34} (1981), no.~2-3, 305--326.

\bibitem[Joc82]{Jo82}
Naomi Jochnowitz, \emph{Congruences between systems of eigenvalues of modular
  forms}, Trans. Amer. Math. Soc. \textbf{270} (1982), no.~1, 269--285.

\bibitem[Kat73]{Ka73}
Nicholas~M. Katz, \emph{{$p$}-adic properties of modular schemes and modular
  forms}, Modular functions of one variable, {III} ({P}roc. {I}nternat.
  {S}ummer {S}chool, {U}niv. {A}ntwerp, {A}ntwerp, 1972) (1973), 69--190.
  Lecture Notes in Mathematics, Vol. 350.

\bibitem[Kat75a]{Ka75}
\bysame, \emph{Higher congruences between modular forms}, Ann. of Math. (2)
  \textbf{101} (1975), 332--367.

\bibitem[Kat75b]{Ka75a}
\bysame, \emph{{$p$}-adic {$L$}-functions via moduli of elliptic curves},
  Algebraic geometry ({P}roc. {S}ympos. {P}ure {M}ath., {V}ol. 29, {H}umboldt
  {S}tate {U}niv., {A}rcata, {C}alif., 1974) (1975), 479--506.

\bibitem[Kil04]{Ki04}
Lloyd J.~P. Kilford, \emph{Slopes of 2-adic overconvergent modular forms with
  small level}, Math. Res. Lett. \textbf{11} (2004), no.~5-6, 723--739.
  
\bibitem[Kil08]{Ki08}
\bysame, \emph{Modular forms}, Imperial College Press, London, 2008, A
  classical and computational introduction.

\bibitem[Kis09]{Ki09}
Mark Kisin, \emph{The {F}ontaine-{M}azur conjecture for {${\rm GL}_2$}}, J.
  Amer. Math. Soc. \textbf{22} (2009), no.~3, 641--690.

\bibitem[Kol62]{Ko62}
O.~Kolberg, \emph{Congruences for {R}amanujan's function {$\tau (n)$}}, Arbok
  Univ. Bergen Mat.-Natur. Ser. \textbf{1962} (1962), no.~11, 8.

\bibitem[KRW16]{KRW16}
Ian Kiming, Nadim Rustom, and Gabor Wiese, \emph{On certain finiteness
  questions in the arithmetic of modular forms}, J. Lond. Math. Soc. (2)
  \textbf{94} (2016), no.~2, 479--502.

\bibitem[KW10]{KW10}
Chandrashekhar Khare and Jean-Pierre Wintenberger, \emph{Serre's modularity
  conjecture}, Proceedings of the {I}nternational {C}ongress of
  {M}athematicians. {V}olume {II}, Hindustan Book Agency, New Delhi, 2010,
  pp.~280--293.

\bibitem[Lau04]{Ge04}
Gerd Laures, \emph{{$K(1)$}-local topological modular forms}, Invent. Math.
  \textbf{157} (2004), no.~2, 371--403.

\bibitem[Mer94]{Me94}
Lo{\"i}c Merel, \emph{Universal {F}ourier expansions of modular forms}, On
  {A}rtin's conjecture for odd {$2$}-dimensional representations, Lecture Notes
  in Math., vol. 1585, Springer, Berlin, 1994, pp.~59--94.

\bibitem[NS12]{NS12}
Jean-Louis Nicolas and Jean-Pierre Serre, \emph{Formes modulaires modulo 2:
  l'ordre de nilpotence des op\'erateurs de {H}ecke}, C. R. Math. Acad. Sci.
  Paris \textbf{350} (2012), no.~7-8, 343--348.
  
\bibitem[Ram00]{Ra16}
S.~Ramanujan, \emph{On certain arithmetical functions [{T}rans. {C}ambridge
  {P}hilos. {S}oc. {\bf 22} (1916), no. 9, 159--184]}, Collected papers of
  {S}rinivasa {R}amanujan, AMS Chelsea Publ., Providence, RI, 2000,
  pp.~136--162.

\bibitem[Rus17a]{Data}
Nadim Rustom, \emph{Code and data for the paper ``Congruences modulo prime
  powers of hecke eigenvalues in level 1”},
  \url{https://rustomn.wordpress.com/hecke-congruences-level-1/}, 2017. Accessed 31 October 2017.

\bibitem[Rus17b]{Ru17}
Nadim Rustom, \emph{Filtrations of dc-weak eigenforms}, Acta Arith.
  \textbf{180} (2017), no.~4, 297--318.
  
\bibitem[Ser73]{Se73}
Jean-Pierre Serre, \emph{Formes modulaires et fonctions z\^eta {$p$}-adiques},
  191--268. Lecture Notes in Math., Vol. 350.
  
\bibitem[Sil09]{Si09}
Joseph~H. Silverman, \emph{The arithmetic of elliptic curves}, second ed.,
  Graduate Texts in Mathematics, vol. 106, Springer, Dordrecht, 2009.
  
\bibitem[Ste07]{St07}
William Stein, \emph{Modular forms, a computational approach}, Graduate Studies
  in Mathematics, vol.~79, American Mathematical Society, Providence, RI, 2007,
  With an appendix by Paul E. Gunnells.

\bibitem[SD73]{Sw73}
H.~P.~F. Swinnerton-Dyer, \emph{On {$l$}-adic representations and congruences
  for coefficients of modular forms}, Modular functions of one variable, {III}
  ({P}roc. {I}nternat. {S}ummer {S}chool, {U}niv. {A}ntwerp, 1972) (1973),
  1--55. Lecture Notes in Math., Vol. 350.

\bibitem[TW16]{TW16}
Panagiotis Tsaknias and Gabor Wiese, \emph{Topics on modular galois
  representations modulo prime powers}, 2016.

\end{thebibliography}
\end{document}